\title{Icosahedral invariants and a construction of class fields via periods of $K3$ surfaces}
\author{Atsuhira Nagano}
\def\bigzerou{\smash{\lower1.7ex\hbox{\b 0}}}
\newtheorem{thm}{Theorem}[section]
\newtheorem{df}{Definition}[section]
\newtheorem{lem}{Lemma}[section]
\newtheorem{prop}{Proposition}[section]
\newtheorem{rem}{Remark}[section]
\newtheorem{exap}{Example}[section]
\newtheorem{cor}{Corollary}[section]
\def\comment#1{{ }}
\begin{document}
\maketitle

\begin{abstract}
In the theory of complex multiplication, it is important to construct class fields over CM fields.
In this paper, we consider explicit $K3$ surfaces parametrized by Klein's icosahedral invariants.
Via the periods and the Shioda-Inose structures of $K3$ surfaces,  the special values of icosahedral invariants generate class fields over quartic CM fields.
 Moreover, we give an explicit expression of the canonical model of the Shimura variety for the simplest case via the periods of $K3$ surfaces.
 \end{abstract}

\footnote[0]{Keywords: class fields ; $K3$ surfaces ; Shimura varieties ;  abelian varieties ; complex multiplication ; Hilbert modular functions ; quartic fields.  }
\footnote[0]{Mathematics Subject Classification 2010:  Primary 11G45 ; Secondary 14J28,  14G35, 11F46, 11G15, 11R16.}
\footnote[0]{Running head: Icosahedral invariants and class fields }
\setlength{\baselineskip}{14 pt}

\section*{Introduction}

The aim of this paper is to construct  class fields via $K3$ surfaces.
We obtain explicit class fields using the period mapping for a family of $K3$ surfaces parametrized by Klein's icosahedral invariants of \cite{Klein}.

In number theory, it is very interesting to  find good special functions whose special values generate class fields (cf. Hilbert's 12th problem).
Among such functions, the elliptic $j$-function is the most famous one.
For an elliptic curve $E(g_2:g_3): y^2=4x^3-g_2 x -g_3$, the $j$-function is defined by the weighted quotient $j(z)=\frac{g_2^3}{g_2^3-27 g_3^2}$, where $z\in \mathbb{H}$ is the period of $E(g_2:g_3)$.
If $E(g_2:g_3)$ has complex multiplication by an imaginary quadratic field $K$, it is well-known that $K=\mathbb{Q}(z)$ and the absolute class field of  $K $ is given by $K(j(z))$ (Kronecker's Jugendtraum).

In the 20th century, A. Weil,  G. Shimura,  Y. Taniyama and many other researchers  studied class fields using the modular functions coming from the moduli of abelian varieties with complex multiplication.
Their theory is celebrated.
However, 
since higher dimensional abelian varieties are defined by complicated equations, 
such modular functions are given by complicated forms (see [Mum] and [Gj]). 
So,  it is much more difficult than the classical story of the elliptic $j$-function to construct explicit class fields via abelian varieties.

In this paper, we give a construction of class fields using not only abelian varieties but also $K3 $ surfaces.
A $K3$ surface is a $2$-dimensional complex surface  with the unique holomorphic $2$-form up to a constant factor.
So,  a $K3$ surface can be regarded as a $2$-dimensional counterpart of an elliptic curve.
Moreover, algebraic $K3$ surfaces are sometimes defined by very  simple  equations.
In this paper, we consider the $K3$ surface
$
S(\mathfrak{A}:\mathfrak{B}:\mathfrak{C}) :  z^2=x^3-4(4y^3 -5 \mathfrak{A} y^2) x^2 + 20 \mathfrak{B} y^3 x +\mathfrak{C} y^4,
$
where $\mathfrak{A},\mathfrak{B}$ and $\mathfrak{C}$ are Klein's icosahedral invariants.
The period of  $S(\mathfrak{A}:\mathfrak{B}:\mathfrak{C}) $ is given by a point $(z_1,z_2)\in \mathbb{H} \times \mathbb{H}$.
 The weighted quotients $ X=\frac{\mathfrak{B}}{\mathfrak{A}^3},Y=\frac{\mathfrak{C}}{\mathfrak{A}^5}$ can be regarded as 
 the Hilbert modular functions on $\mathbb{H}\times \mathbb{H}$ for $\mathbb{Q}(\sqrt{5}),$
 which is the real quadratic field with the smallest discriminant.
We will see that $X,Y$ functions  generate  class fields over quartic CM fields (Theorem \ref{ThmCF}).
So, we can regard the pair of our $X$, $Y$  functions as a very natural and simple counterpart of the elliptic $j$-function (see Table 1).
Here, we note that the proof of this result is based on  techniques for elliptic $K3$ surfaces.

\begin{table}[h]
\center
\begin{tabular}{ccc}
\toprule
  & Classical Theory  & Our Story  \\
 \midrule
Variety &Elliptic curve $E(g_2:g_3)$ & $K3$ surface $S(\mathfrak{A}:\mathfrak{B}:\mathfrak{C})$ \\
 Modular Function& $j=\frac{g_2^3}{g_2^3-27 g_3^2}$ on $\mathbb{H}$  & $X=\frac{\mathfrak{B}}{\mathfrak{A}^3}$, $Y=\frac{\mathfrak{C}}{\mathfrak{A}^5}$   on $\mathbb{H}\times \mathbb{H}$ \\ 
  Totally Real Field & $\mathbb{Q}$ & $\mathbb{Q}(\sqrt{5})$  \\
 Class Field &   $K(j)/K$ & $K^*(X,Y)/K^*$   \\
 \bottomrule
\end{tabular}
\caption{The elliptic $j$-function and our $X,Y$ functions}
\end{table}

This paper is organized as follows.

In Section 1, we recall class fields from abelian varieties with complex multiplication.
We use these results in our argument.

In Section 2,
we will see that $X,Y$ functions generate the unramified class fields  $K^*(X,Y)/K^*$ for certain quartic CM fields $K^*$ (Theorem \ref{ThmCF}).
The $X, Y$ functions are given by  the inverse correspondence of the period mapping of our $K3$ surface $S(\mathfrak{A}:\mathfrak{B}:\mathfrak{C}).$
The Shioda-Inose structure connects the period mapping of $K3 $ surfaces and the moduli of abelian surfaces.
Moreover, we obtain an explicit expression of the canonical model of the Hilbert modular surface as a Shimura variety for the real quadratic field of the smallest discriminant (Theorem \ref{ThmCanonical}). 
We note that such canonical models were firstly studied  in \cite{S67}.
But, it is non-trivial to obtain an explicit expression for them.
In  Section 2, we  obtain an explicit model derived from the period mapping of our $K3$ surfaces. 
This result is given as a compilation of the previous works \cite{NaganoTheta}, \cite{NaganoKummer} and \cite{NaganoShimura}.

By the way, 
it is difficult
to determine the Galois group for a class field, when the Galois group is not equal to the ideal class group.
In fact, the Galois groups for our class fields $K^*(X,Y)/K^*$ do not always coincide with the ideal class groups of $K^*$.
In Section 3, we determine the structure of the Galois groups ${\rm Gal}(K^*(X,Y)/K^*)$ for cyclic extensions $K/\mathbb{Q}$ (Theorem \ref{ThmHantei}).
Our result relies upon the result of Shimura \cite{Shimura1} for cyclic extensions.
The results in this section may be applied to the theory of cryptography,
because explicit and detailed constructions of class fields are useful in that theory.

Our story gives a visible model of  the sophisticated theory  of class fields  and Shimura varieties.
The author expects that 
some explicit algebraic $K3$ surfaces (for example, those studied in \cite{CD}, \cite{EK}, etc.)  have direct applications in algebraic number theory via periods.
The argument in this paper gives a prototype of such mathematical approaches.
Moreover, $K3$ surfaces have some merits, for they are studied from various viewpoints.
For example,  $K3$ surfaces  are well-studied  in  mirror symmetry.
In fact, our  $X$, $Y$ functions are closely related to mirror symmetry (see Remark \ref{mirrorRem}).
The author believes that our story  concretely gives a new non-trivial relation between   number theory and mirror symmetry.

\section{Abelian varieties with complex multiplication and class fields}

In this section, we survey the theory of abelian varieties with complex multiplication.
For detail,  see  Shimura \cite{Shimura97}.
The results in this section shall be used in Section 2 and 3.

\subsection{CM fields and CM types}

Let $K_0$ be a totally real number field such that  $[K_0:\mathbb{Q}]=n$.
A totally imaginary quadratic extension $K$ of $K_0$ is called a CM field.
Let $\mathfrak{O}_K$ be the ring of integers of $K$.
We have $2n $ embeddings $\varphi_1,\cdots, \varphi_{2n}:K\hookrightarrow \mathbb{C}$.
We can suppose that 
$\varphi_1|_{K_0},\cdots, \varphi_{n}|_{K_0}$ give distinct $n$ embeddings $K_0\hookrightarrow \mathbb{R}$
and $\varphi_{n+j}$ $(j=1,\cdots,n)$ is complex conjugate of $\varphi_j$.
We call $(K,\{\varphi_1,\cdots,\varphi_n\})=(K,\{\varphi_j\})$ a CM type. 

For an  abelian variety $A$ defined over $\mathbb{C}$,
let ${\rm End}(A)$ be the ring of endomorphisms of $A$ and
 ${\rm End}_0(A)={\rm End}(A) \otimes_\mathbb{Z} \mathbb{Q}$.
 For an $n$-dimensional abelian variety $A$ and an embedding $\iota:K\hookrightarrow {\rm End}_0(A) $ such that 
 $\iota \circ id_K=id_{\rm{End}_0(A)}$,
 we call $(A,\iota)$ an abelian variety of type $K$.
Moreover,
for the analytic representation $S$ of ${\rm End}_0(A) $ and the complex conjugate $\rho$,
if $S$ ($S^\rho$, resp.) is equivalent to the direct sum of $\varphi_1,\cdots,\varphi_n$ ($\varphi_{n+1},\cdots,\varphi_{2n}$, resp.),
$(A,\iota)$ is called an abelian variety of type $(K,\{\varphi_j\})$.
If $(A_1,\iota_1)$ and $(A_2,\iota_2) $ are abelian varieties of  the same type,
$A_1$ and $A_2$ are isogenous to each other.

From a given CM type,
we can obtain a corresponding abelian variety as follows.
For $\alpha\in K$,
we set
$
u(\alpha)={}^t (  \alpha^{\varphi_1} ,  \cdots ,  \alpha^{\varphi_n} ) \in \mathbb{C}^n.
$
Let
$M\subset K $ be a free $\mathbb{Z}$-module of rank $2n$.
Let $\alpha_1,\cdots,\alpha_{2n}$ be a system of basis of $M$.
Then, the vectors $u(\alpha_1),\cdots , u(\alpha_{2n})$ span a lattice $\Lambda=\Lambda(M)$ of $\mathbb{C}^n$.
Take
$\zeta\in K$
 such that
 $K=K_0(\zeta)$,
 $\zeta^2\in K_0,$  $-\zeta^2$ is totally positive and  
$
{\rm Im}(\zeta^{\varphi_j}) >0$  for $j\in \{1,\cdots,n\}$.
Set
\begin{align}\label{RiemannShimura}
E(z,w)=\sum_{j=1}^n \zeta^{\varphi_j} (z_j\overline{w_j} - \overline{z_j}w_j)
\end{align}
for $z=(z_1,\cdots,z_n),w=(w_1,\cdots,w_n) \in \mathbb{C}^n$. 
We can check that
$E(z,\sqrt{-1}w)= -\sqrt{-1}\sum_{j=1}^n \zeta^{\varphi_j} (z_j\overline{w_j} + \overline{z_j}w_j)$
is symmetric and
positive non-degenerate.
We obtain
$
E(u(\alpha),u(\beta))={\rm Tr}_{K/\mathbb{Q}} (\zeta \alpha \beta^\rho)
$
for $\alpha,\beta \in K$.
We can find a positive integer $\nu$ such that
 $\nu E(z,w)$  defines a non-degenerate Riemann form on $\mathbb{C}^n/\Lambda(M)$.
Therefore, the complex torus $\mathbb{C}^n/\Lambda$ gives an n-dimensional abelian variety $A=A(M)$.
For $\alpha\in K$, set 
$
q(\alpha)=
\begin{pmatrix}
\alpha^{\varphi_1} & & 0 \\
&\cdots& \\
0&& \alpha^{\varphi_n}
   \end{pmatrix}.
$
The linear transformation given by $q(\alpha)$ defines $\iota (\alpha)\in {\rm End}_0(A(M)).$
So, $(A(M),\iota)$ is an abelian variety of type $(K,\{\varphi_j\})$.

The above $(A(M),\iota)$ satisfies $\iota (K) \subset {\rm End}_0(A(M))$.
However, we cannot assure that
\begin{align}\label{PrincipalA}
\iota (\mathfrak{O}_K) = {\rm End}(A(M)) \cap \iota (K).
\end{align}

\begin{df}\label{DefPrin}
 We call the pair $(A,\iota)$ satisfying
  (\ref{PrincipalA})  an abelian variety of CM type $(\mathfrak{O}_K,\{\varphi_j\})$.
\end{df}

\begin{rem}
In \cite{Shimura97},
$(A,\iota)$ in the above definition is called a principal abelian variety.
But, in this paper, we will consider principally polarized abelian varieties.
In order to avoid confusion, we use the terminology of the above definition.
\end{rem}

\begin{prop}\label{PropPrincipal} (\cite{Shimura97}, Section 7)
The abelian variety $(A(M),\iota)$ is of CM type $(\mathfrak{O}_K,\{\varphi_j \})$ if and only if $M$ is a fractional ideal of $K$.
\end{prop}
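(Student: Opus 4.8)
The plan is to reduce the statement to a purely algebraic fact about the multiplier ring of the $\mathbb{Z}$-module $M$. Write $\mathfrak{O}(M)=\{\alpha\in K : \alpha M\subseteq M\}$ for the order associated to $M$. I claim that condition (\ref{PrincipalA}), which defines what it means for $(A(M),\iota)$ to be of CM type $(\mathfrak{O}_K,\{\varphi_j\})$, is equivalent to $\mathfrak{O}(M)=\mathfrak{O}_K$, and that this in turn is equivalent to $M$ being a fractional ideal of $K$.

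First I would identify $\mathrm{End}(A(M))\cap\iota(K)$ explicitly. A holomorphic endomorphism of the complex torus $\mathbb{C}^n/\Lambda(M)$ is induced by a $\mathbb{C}$-linear map $T$ of $\mathbb{C}^n$ with $T\Lambda(M)\subseteq\Lambda(M)$; since $\mathbb{C}^n/\Lambda(M)$ is an abelian variety, every such $T$ comes from $\mathrm{End}(A(M))$, and the analytic representation carries $\iota(\alpha)$ to $q(\alpha)$. From the definition of $u$ one has $q(\alpha)u(\beta)=u(\alpha\beta)$ for all $\alpha,\beta\in K$, hence $q(\alpha)\Lambda(M)=u(\alpha M)$; because each $\varphi_j$ is an embedding and $M\neq 0$, the map $u$ is injective, so $q(\alpha)\Lambda(M)\subseteq\Lambda(M)$ holds if and only if $\alpha M\subseteq M$. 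Therefore $\mathrm{End}(A(M))\cap\iota(K)=\iota(\mathfrak{O}(M))$.

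Next I would check $\mathfrak{O}(M)\subseteq\mathfrak{O}_K$: multiplication by $\alpha\in\mathfrak{O}(M)$ is a $\mathbb{Z}$-linear endomorphism of the free $\mathbb{Z}$-module $M$ of rank $2n$, so by Cayley--Hamilton $\alpha$ satisfies a monic polynomial over $\mathbb{Z}$ and hence is an algebraic integer. Using the injectivity of $\iota$, condition (\ref{PrincipalA}) then becomes $\mathfrak{O}(M)=\mathfrak{O}_K$, that is, $\mathfrak{O}_K M\subseteq M$. Finally, for a full $\mathbb{Z}$-lattice $M$ in $K$ (which is exactly what ``free $\mathbb{Z}$-module of rank $2n$'' means), the inclusion $\mathfrak{O}_K M\subseteq M$ says precisely that $M$ is a nonzero, finitely generated $\mathfrak{O}_K$-submodule of $K$, i.e. a fractional ideal; conversely any fractional ideal of $K$ is $\mathfrak{O}_K$-stable and is a full $\mathbb{Z}$-lattice. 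Combining these equivalences yields the proposition.

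The computations are routine; the only points that need care are the passage from endomorphisms of the complex torus to endomorphisms of the abelian variety (standard for abelian varieties over $\mathbb{C}$) and the verification that $\mathfrak{O}(M)$ lies in $\mathfrak{O}_K$ rather than in some larger subring of $K$. I expect the main step to be the lattice identity $q(\alpha)\Lambda(M)=u(\alpha M)$ together with the injectivity of $u$: once that is in place, the rest is formal bookkeeping, and there is no serious obstacle.
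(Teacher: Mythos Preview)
Your argument is correct and is essentially the standard proof (the one found in \cite{Shimura97}, Section 7): identify $\mathrm{End}(A(M))\cap\iota(K)$ with $\iota(\mathfrak{O}(M))$ via the lattice relation $q(\alpha)u(\beta)=u(\alpha\beta)$, observe $\mathfrak{O}(M)\subseteq\mathfrak{O}_K$ by integrality, and conclude that (\ref{PrincipalA}) amounts to $\mathfrak{O}_K M\subseteq M$. Note that the paper itself gives no proof here---it simply cites Shimura---so there is nothing further to compare against.
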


Let us consider 
abelian varieties $(A,\iota)=(A(\mathfrak{a}),\iota)$ coming from a fractional ideal $\mathfrak{a}$ of $K.$ 
Take  $(A,\Theta,\iota)$,
where $\Theta$ is a polarization  given by the Riemann form of (\ref{RiemannShimura}). 
We call $(A,\Theta,\iota)$ a polarized abelian variety  of type $(K,\{\varphi_j\};\zeta,\mathfrak{a})$, or of type $(\zeta,\mathfrak{a})$.
Let $\mathfrak{C}_0(K)$ be the abelian group consisting of all pairs $\{b,\mathfrak{c}\}$,
where $b\in K_0$ is totally positive and $\mathfrak{c}\in I_K$.
Here, the multiplication is given by 
$
\{b_1,\mathfrak{c}_1\}\{b_2,\mathfrak{c}_2\}=\{b_1b_2,\mathfrak{c}_1\mathfrak{c}_2\}.$
Set
$
\mathfrak{C}(K)=\mathfrak{C}_0(K) /\{ \{x x^\rho, x\mathfrak{O}_K\}| x\in K^\times\}.
$
We denote the coset of $\{b,\mathfrak{c}\}$ by
$
(b,\mathfrak{c}).
$
Suppose $P_1=(A_1,\Theta_1,\iota_1)$ ($P_2=(A_2,\Theta_2,\iota_2)$, resp.) be a polarized abelian variety of type $(K,\{\varphi_j\};\zeta_1,\mathfrak{a}_1)$ $((K,\{\varphi_j\};\zeta_2,\mathfrak{a}_2),$ resp.). 
Setting 
$
(b,\mathfrak{c})=(\zeta_1^{-1} \zeta_2, \mathfrak{a}_2^{-1} \mathfrak{a}_1),
$
 we put
$
(P_2:P_1) = (b,\mathfrak{c}).$
We remark that 
$(A_1,\Theta_1)$ and $(A_2,\Theta_2)$
 are isomorphic  
if and only if
$(P_2:P_1)=(1,\mathfrak{O}_K)$.

\subsection{Reflex}

\begin{df}\label{DfPrimitive}
Let $(K,\{\varphi_j\})$ be a CM type and $(A,\iota)$ be a corresponding abelian variety.
If $A$ is a simple abelian variety,
we call 
$(K,\{\varphi_j\})$
primitive.
\end{df}

\begin{prop} \label{PropReflex} (\cite{Shimura97}, Section 8)
Let $(K;\{\varphi_1,\cdots,\varphi_n\})$ be a CM type.
Let $L$ be a Galois extension of $\mathbb{Q}$ containing $K$.
Set $S$ be the set of all the elements of ${\rm Gal} (L/\mathbb{Q})$ inducing some $\varphi_j$  on $K_0$.
Set $
S^*=\{\sigma^{-1}|\sigma\in S\}$
and 
$
 H^*=\{ \gamma\in {\rm Gal}(L/\mathbb{Q})| \gamma S^*=S^*\}.
$
There exists the subfield $K^*$ of $L$ corresponding to the subgroup $H^* $ of ${\rm Gal} (L/\mathbb{Q})$.
Let $\{\psi_k\}$ be the set of all the embeddings $K^*\hookrightarrow \mathbb{C}$ coming from $S^*$.
Then, $(K^*,\{\psi_k\})$ is a primitive  CM type.
Moreover, $(K^*,\{\psi_k\})$ is determined only by $(K,\{\varphi_j\})$.
\end{prop}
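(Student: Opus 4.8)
The plan is to push the whole statement into finite group theory over $G={\rm Gal}(L/\mathbb{Q})$, where the four assertions (existence of $K^{*}$, the CM-type property, primitivity, independence of $L$) become manipulations of cosets, and then to isolate the single place where the CM hypothesis is used in an essential way. First I would fix an embedding $L\hookrightarrow\mathbb{C}$ so that the $\varphi_{j}$ extend to $L$, write $H={\rm Gal}(L/K)$, and let $\rho\in G$ denote complex conjugation; since a CM field is stable under complex conjugation, $\overline{K}=K$, i.e. $\rho H\rho^{-1}=H$. Identifying ${\rm Hom}(K,\mathbb{C})$ with the coset space of $H$ in $G$ by restriction, $S$ is a union of $n$ cosets of $H$, so $S^{*}=S^{-1}$ is a union of $n$ cosets of $H$ (on the other side) and $|S|=|S^{*}|=[L:\mathbb{Q}]/2$. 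The content of the condition that $\varphi_{n+j}$ be the complex conjugate of $\varphi_{j}$ is the disjoint decomposition $G=S\sqcup\rho S$; combined with $\rho H\rho^{-1}=H$ it also gives $\rho S=S\rho$ (a direct check with the $H$-cosets over the $\varphi_{j}$ and the $\overline{\varphi_{j}}$), and, passing to inverses, $G=S^{*}\sqcup\rho S^{*}$ with $\rho S^{*}=S^{*}\rho$. Existence of $K^{*}$ is then nothing but the Galois correspondence for the subgroup $H^{*}\le G$, and I set $K^{*}=L^{H^{*}}$.

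Next I would verify that $K^{*}$ is a CM field. From $\rho S^{*}=S^{*}\rho$ one gets $\rho S^{*}\rho^{-1}=S^{*}$, so conjugation by $\rho$ preserves the stabiliser of $S^{*}$; that is, $\rho H^{*}\rho^{-1}=H^{*}$, whence $\overline{K^{*}}=L^{\rho H^{*}\rho^{-1}}=K^{*}$. Also $\rho\notin H^{*}$, since $\rho S^{*}=S^{*}$ would contradict $G=S^{*}\sqcup\rho S^{*}$; hence complex conjugation acts non-trivially on $K^{*}$, the group $\langle H^{*},\rho\rangle=H^{*}\sqcup H^{*}\rho$ has order $2|H^{*}|$, and $K^{*}_{0}:=L^{\langle H^{*},\rho\rangle}$ is a subfield of $K^{*}$ of index $2$. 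The hard part --- and the step I expect to be the main obstacle --- is to prove that $K^{*}_{0}$ is \emph{totally real}, equivalently that no $G$-conjugate of $\rho$ lies in $H^{*}$ (were there one, $K^{*}$ would have a real place). This is exactly where one must exploit that $S^{*}$ is the inverse of the lift of a genuine CM type and not just an arbitrary half of $G$; it is the crux of the argument in \cite{Shimura97}, Section 8, whereas everything else in the proof is soft. Granting it, $(K^{*},K^{*}_{0})$ is a CM pair.

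With $K^{*}$ in hand I would treat $\{\psi_{k}\}$ as follows. By construction $H^{*}$ is the stabiliser of $S^{*}$, so $S^{*}$ is a union of complete $H^{*}$-cosets of the type that corresponds, under restriction, to embeddings of $K^{*}=L^{H^{*}}$ (here one must keep track of left versus right cosets). Hence the restrictions $\psi_{k}$ of the elements of $S^{*}$ to $K^{*}$ are well defined, their number is $|S^{*}|/|H^{*}|=[L:\mathbb{Q}]/(2[L:K^{*}])=[K^{*}:\mathbb{Q}]/2$, and $S^{*}$ is precisely the lift of $\{\psi_{k}\}$ to $G$. Plugging this into the partition $G=S^{*}\sqcup\rho S^{*}$ into $H^{*}$-cosets, the $H^{*}$-cosets split into two equal halves, the half inside $\rho S^{*}$ being $\rho$ times the half inside $S^{*}$ and so consisting of the conjugate embeddings $\overline{\psi_{k}}$. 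Thus $\{\psi_{k}\}$ meets each complex-conjugate pair of embeddings of $K^{*}$ exactly once, i.e. $(K^{*},\{\psi_{k}\})$ is a CM type. For primitivity I would invoke the standard criterion that a CM type $\Psi$ of a CM field $F$ is primitive iff the stabiliser in $G$ of its lift equals ${\rm Gal}(L/F)$ (imprimitivity being exactly the situation where $\Psi$ is inflated from a CM type of a proper CM subfield of $F$); since the lift of $\{\psi_{k}\}$ is $S^{*}$ and its stabiliser is $H^{*}={\rm Gal}(L/K^{*})$ by the very definition of $H^{*}$, $(K^{*},\{\psi_{k}\})$ is primitive.

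Finally, independence of $L$: if $L\subseteq L'$ with $L'/\mathbb{Q}$ Galois and $\pi\colon{\rm Gal}(L'/\mathbb{Q})\to G$ is restriction, then $S'$, $(S')^{*}$ and $(H')^{*}$ are the $\pi$-preimages of $S$, $S^{*}$ and $H^{*}$, so the fixed field of $(H')^{*}$ in $L'$ is $L^{H^{*}}=K^{*}$ and the restriction maps onto $K^{*}$ are compatible, yielding the same $\{\psi_{k}\}$; since any two Galois extensions of $\mathbb{Q}$ containing $K$ sit inside a common one, $(K^{*},\{\psi_{k}\})$ depends only on $(K,\{\varphi_{j}\})$. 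In short, once the problem is transported to $G$, existence, the CM-type property, primitivity, and independence all reduce to coset bookkeeping with $H$, $H^{*}$ and $\rho$; the only genuinely non-formal input is the total reality of $K^{*}_{0}$ in the second step, which is where I would expect to spend the real effort.
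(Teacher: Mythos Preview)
The paper does not supply a proof of this proposition at all: it is stated with a citation to \cite{Shimura97}, Section~8, and used as a black box. There is therefore no ``paper's own proof'' to compare against. Your outline is essentially the standard argument one finds in Shimura, transported into coset combinatorics inside $G={\rm Gal}(L/\mathbb{Q})$, and the overall architecture (existence via Galois correspondence, CM-type via the partition $G=S^{*}\sqcup\rho S^{*}$, primitivity via the stabiliser criterion, independence of $L$ via pullback along ${\rm Gal}(L'/\mathbb{Q})\twoheadrightarrow G$) is correct.

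One point needs tightening. You assert that ``$K^{*}_{0}$ is totally real'' is equivalent to ``no $G$-conjugate of $\rho$ lies in $H^{*}$''. The latter condition is exactly the statement that $K^{*}$ has no real place, i.e.\ that $K^{*}$ is totally \emph{imaginary}; it does not by itself force $K^{*}_{0}=L^{\langle H^{*},\rho\rangle}$ to be totally real. What you actually need is that every conjugate $\sigma^{-1}\rho\sigma$ lands in the nontrivial coset $H^{*}\rho$, equivalently that the commutator $[\sigma,\rho]$ lies in $H^{*}$ for all $\sigma\in G$. This is automatic when $\rho$ is central in $G$, which holds precisely when $L$ is itself a CM field. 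Since the Galois closure of a CM field is again CM and sits inside any Galois $L\supseteq K$, the clean way to close the gap is: first run your argument with $L$ equal to the Galois closure of $K$ (there $\rho$ is central, so $K^{*}\subset L$ is a subfield of a CM field stable under complex conjugation with $\rho|_{K^{*}}\neq{\rm id}$, hence CM); then invoke your independence-of-$L$ step to conclude for arbitrary $L$. With that adjustment your sketch is complete and matches Shimura's treatment.
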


\begin{df}\label{ReflexDef}
For a CM type $(K,\{\varphi_j\})$, 
the primitive CM type
$
(K^*;\{\psi_k\})
$
in Proposition \ref{PropReflex} is called the reflex of $(K,\{\varphi_j\})$.
\end{df}

\begin{exap}\label{ExapS} (see \cite{Shimura97}, 8.4)
Let $K_0$ be a real quadratic field and $K$ be a totally imaginary quadratic extension of $K_0$.
Namely, let $K$ be a CM field of degree $4$.
Take
$\zeta\in K$
such that
$-\zeta^2 \in K_0$ is  totally positive.
We can take $\varphi_1 ,\varphi_2 : K \hookrightarrow \mathbb{C}$ such that $\varphi_1=id$ and $\varphi_2=\varphi$ satisfying
${\rm Im }(\zeta^{\varphi})>0.$
Then, 
$
(K,\{\varphi_1,\varphi_2\})
$
is a CM type.
We have the following three cases.

(i) Suppose ${\rm Gal}(K/\mathbb{Q})\simeq (\mathbb{Z}/2\mathbb{Z})^2$. 
Letting the group $S$ in Proposition \ref{PropReflex} be $S=\{id,\sigma\}$,
we have 
${\rm Gal} (K/\mathbb{Q})=\{id, \sigma,\rho ,\sigma\rho\}$,
where $\rho $ is the complex conjugation.
In this case, the CM type $(K,\{id, \sigma\})$ is not primitive. 
The reflex $K^*$ is an imaginary quadratic field.

(ii)
Suppose ${\rm Gal}(K/\mathbb{Q})\simeq (\mathbb{Z}/4\mathbb{Z})$.
Letting the group $S$ in Proposition \ref{PropReflex} be $S=\{id,\sigma\}$,
we have 
${\rm Gal} (K/\mathbb{Q})=\{id, \sigma, \sigma^2=\rho ,\sigma^3\}$,
where $\rho $ is the complex conjugation.
In this case, the CM type 
$
 (K,\{1,\sigma\})
$
is primitive.
The reflex  of $(K,\{id, \sigma\})$ is given by
$
(K^*,\{\psi_k\})=(K,\{1,\sigma\}).
$

(iii) Suppose $K$ is not normal over $\mathbb{Q}$.
Letting
$K_0=\mathbb{Q}(\sqrt{\Delta})$ $(\Delta>0)$,
we can assume that $-\xi^2=x+y\sqrt{\Delta}$ $(x,y\in\mathbb{Q})$ is totally positive. 
Set
$
\xi=\sqrt{-1} \sqrt{x+y\sqrt{\Delta}} $ and 
$\xi^\varphi=\sqrt{-1} \sqrt{x-y\sqrt{\Delta}}$.
Since $K $ is not normal, $\xi^\varphi \not \in K$. So, the Galois closure $L$ is given by $L=\mathbb{Q}(\xi,\xi^\varphi)$.
Set
$\sigma:(\xi,\xi^\varphi)\mapsto (\xi^\varphi,-\xi)$
and 
$ \tau:(\xi,\xi^\varphi)\mapsto (\xi^\varphi,\xi)$.
We can show that the Galois group ${\rm Gal}(L/\mathbb{Q})$ is generated by $\sigma $ and $\tau$.
Moreover,
the CM type $(K,\{1,\varphi\}) $ is primitive 
and its reflex
is given by
$
(\mathbb{Q} (\xi+\xi^\varphi),\{1,\sigma\tau\}).
$
\end{exap}

\subsection{The field of moduli of principally polarized abelian varieties}

Let $k_1$ and $k_2$ be number fields.
Let $\sigma : k_1 \rightarrow k_2$ be an embedding.
For an algebro-geometric object $W$ over $k_1$,
we denote by $W^\sigma$ the image $W $ under $\sigma$.
Then, $W^\sigma$ is defined over $k_2$.

Let $k_1$ be an algebraic number fields with ${\rm char}(k_1)=0$.
Let $(A,\Theta)$ be a polarized abelian variety defined over $k_1$.
Take an Galois extension $L$ of $k_1$ over $\mathbb{Q}$.
We take a subgroup
$
G_1=\{ \sigma \in {\rm Gal}(L/\mathbb{Q}) | (A,\Theta) \text{ is isomorphic to } (A^\sigma,\Theta^\sigma)  \}.
$
Let $M$ be the subfield of $L$ corresponding to the subgroup $G_1$.
We note that the field $M$ is uniquely determined by $(A,\Theta)$ and independent of the choice of the field of definition $k_1$ of $(A,\Theta)$
 and the Galois closure $L$.

\begin{df}\label{Df1.2}
The above field $M$ is called the field of moduli of $(A,\Theta)$.
\end{df}

\begin{rem} \label{DfWeil} 
(\cite{Weil}, see also  \cite{Shimura97})
Let ${\rm Aut}(A,\Theta)$ be the group of automorphisms of a polarized abelian variety $(A,\Theta)$.
The quotient variety
$A/{\rm Aut}(A,\Theta)$ is called the Kummer variety of $(A,\Theta)$ in the sense of Weil \cite{Weil}.
Let $M$ be the field of moduli of $(A,\Theta)$.
Then, the field of definition of the  Kummer variety of $(A,\Theta)$  contains  $M$.
Moreover, there exists a model $W$ of the Kummer variety of $(A,\Theta)$ such that the field of definition of $W$ coincides with $M$. 
\end{rem}

In the rest of this subsection, we survey the field of moduli of  principally polarized abelian surfaces.
Let $(A,\Theta)$ be a simple and principally polarized abelian surface.
Then, there exists a  curve $C$ of genus $2$ such that $A$ is equal to the Jacobian variety ${\rm Jac}(C)$ of $C$.
Suppose $C$ is given by
 $
 C: y^2=u_0 (x-x_1)(x-x_2)(x-x_3)(x-x_4)(x-x_5)(x-x_6).
 $
Then, we obtain the Igusa-Clebsch invariants $I_2,I_4,I_6$ and $I_{10}$ of $C$: 
\begin{align}
\begin{cases}
&I_2 = u_0^2\sum (12)^2(34)^2(56)^2 ,\quad 
I_4 = u_0^4\sum (12)^2(23)^2(31)^2(45)^2(56)^2(64)^2 ,\\
&I_6 =u_0^6\sum (12)^2(23)^2(31)^2(45)^2(56)^2(64)^2(14)^2(25)^2(36)^2 ,\quad I_{10}= u_0^{10}\prod_{i<j} (ij)^{2}. \\
\end{cases}
\end{align}
Here, $(jk) $ indicates $(x_j-x_k)$.

 Let $\mathcal{M}_2$ be the moduli space of  curves of genus $2$.
Let $\mathbb{P}(1:2:3:5)=\{(\zeta_1:\zeta_2:\zeta_3:\zeta_5)\}$ be the weighted projective space.
It is well-known that 
$
\mathcal{M}_2=\mathbb{P}(1:2:3:5) -\{\zeta_5=0\}.
$
In fact,
$(I_2:I_4:I_6:I_{10})$ gives a well-defined point of the moduli space $\mathcal{M}_2$.
We note that the moduli space $\mathcal{M}_2$ is a Zariski open set of the moduli space $\mathcal{A}_2$ of principally polarized abelian surfaces ($\mathcal{M}_2$ is the complement of the divisor given by  the points corresponding to the product of elliptic curves).

In \cite{Igusa},
Igusa defined arithmetic invariants $J_2,J_4,J_6$ and $J_{10}$ given by
\begin{align}\label{IgusaI}
 J_2=2^{-3} I_2, \quad  J_{10}= 2^{-12} I_{10},\quad
 J_{4}=2^{-5} 3^{-1} (4 J_2^2 -I_{4}),\quad
J_{6}=2^{-6} 3^{-2} (8J_2^3 -160 J_2 J_4 -I_6).
\end{align}
Due to (\ref{IgusaI}), we have an isomorphism between
the ring
$\mathbb{Q}[I_2,I_4,I_6,I_{10}]$ 
and
the ring
 $\mathbb{Q}[J_2,J_4,J_6,J_{10}]$.

For a curve $C$ of genus $2$,
set 
\begin{align}\label{IgusaModular}
m_1(C)=\frac{J_2^5}{J_{10}}, \quad m_2(C)=\frac{J_2^3 J_4}{J_{10}}, \quad m_3(C)=\frac{J_2^2 J_6}{J_{10}}.
\end{align}
They are called the absolute invariants.
Then, we have the following famous result (see \cite{S63} or \cite{FG}).

\begin{prop}\label{ModularIgusaProp}
(1) Let $C_1$ and $C_2$ be two curves of genus $2$ with $I_{10}\not=0$.
Then, the curve $C_1$ is isomorphic to $C_2$ if and only if $m_j(C_1)=m_j(C_2) $ $(j=1,2,3)$.

(2) Let $(A,\Theta)$ be a principally polarized abelian surface and $C$ be a curve of genus $2$ such that ${\rm Jac}(C)=A$.
Then, a subfield $M$ of the field of definition of $(A,\Theta)$ gives a field of moduli of $(A,\Theta)$ if and only if  the absolute invariants $m_j (C)$ $(j=1,2,3)$
 generate $M$ over $\mathbb{Q}$.
 \end{prop}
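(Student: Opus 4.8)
The plan is to prove the two assertions of Proposition~\ref{ModularIgusaProp} in sequence, treating part~(2) as essentially a Galois-descent reformulation of part~(1).

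First I would establish part~(1). The absolute invariants $m_1(C),m_2(C),m_3(C)$ are, by construction~(\ref{IgusaModular}), rational functions in the Igusa invariants $J_2,J_4,J_6,J_{10}$, hence (via~(\ref{IgusaI})) rational functions in the Igusa--Clebsch invariants $I_2,I_4,I_6,I_{10}$; and since $I_{10}=u_0^{10}\prod_{i<j}(x_i-x_j)^2$ is the discriminant of the defining sextic, the hypothesis $I_{10}\neq0$ guarantees $C$ is a smooth genus~$2$ curve and that the $m_j$ are well defined. The key input is the classical fact, recalled in the excerpt, that $\mathcal{M}_2 = \mathbb{P}(1:2:3:5)\setminus\{\zeta_5=0\}$ via the weighted point $(I_2:I_4:I_6:I_{10})$: isomorphism classes of genus~$2$ curves correspond bijectively to weighted-projective points with $I_{10}\neq0$. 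So $C_1\cong C_2$ iff $(I_2:I_4:I_6:I_{10})$ agree as weighted-projective points, i.e.\ iff there is $t\in\mathbb{C}^\times$ with $I_{2k}(C_2)=t^{2k}I_{2k}(C_1)$ for all relevant $k$. It remains to check that this weighted-projective equality is detected exactly by the three genuine (weight-zero) ratios $m_1=J_2^5/J_{10}$, $m_2=J_2^3J_4/J_{10}$, $m_3=J_2^2J_6/J_{10}$. The ``only if'' direction is immediate since each $m_j$ has total weight $0$. For ``if'', suppose $m_j(C_1)=m_j(C_2)$ for $j=1,2,3$; one distinguishes the cases $J_2\neq0$ and $J_2=0$. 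When $J_2\neq0$ one solves: $m_1\neq0$ forces $J_2\neq0$ on both sides, and then $m_2/m_1 = J_4/J_2^2$, $m_3/m_1 = J_6/J_2^3$, $1/m_1 = J_{10}/J_2^5$ recover all ratios $J_{2k}/J_2^k$, giving the required $t$ with $t=(J_2(C_2)/J_2(C_1))^{1/2}$. When $J_2=0$ one must argue separately — this is the one place requiring care — using that the remaining invariants with $J_2=0$ still separate points on the corresponding stratum of $\mathbb{P}(1:2:3:5)$; here I would either cite \cite{S63} or \cite{FG} directly or note that on $\{J_2=0\}$ the locus $\{J_{10}\neq0\}$ is covered analogously by ratios among $J_4,J_6,J_{10}$. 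I expect this degenerate-stratum bookkeeping to be the main obstacle, though it is standard.

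Next I would deduce part~(2) from part~(1) by Galois descent. Let $(A,\Theta)$ be principally polarized with $A=\mathrm{Jac}(C)$, and fix a field of definition $k_1$ and a Galois extension $L/\mathbb{Q}$ as in Definition~\ref{Df1.2}, so that the field of moduli $M$ is the fixed field of $G_1=\{\sigma\in\mathrm{Gal}(L/\mathbb{Q}) : (A,\Theta)\cong(A^\sigma,\Theta^\sigma)\}$. By Torelli, $(A,\Theta)\cong(A^\sigma,\Theta^\sigma)$ if and only if $C\cong C^\sigma$ as curves, which by part~(1) holds if and only if $m_j(C)=m_j(C^\sigma)=m_j(C)^\sigma$ for $j=1,2,3$. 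Hence $G_1$ is exactly the subgroup of $\mathrm{Gal}(L/\mathbb{Q})$ fixing each $m_j(C)$, so its fixed field $M$ equals $\mathbb{Q}(m_1(C),m_2(C),m_3(C))$. This shows that the three absolute invariants generate $M$ over $\mathbb{Q}$; conversely, any subfield $M'$ of a field of definition of $(A,\Theta)$ is a field of moduli precisely when $M'=M$, which by the displayed equality is equivalent to $M'=\mathbb{Q}(m_1(C),m_2(C),m_3(C))$. One subtlety to address is that the $m_j$ are \emph{a priori} defined from a chosen Weierstrass model of $C$ over some extension, so I would remark that they are independent of that choice (being ratios of the projectively-invariant $I$'s), hence lie in any field of definition of the curve, and in particular the field of moduli of the curve coincides with that of $(A,\Theta)$ via Torelli. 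With these two points in place the proposition follows.

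Overall the structure is: (a) recall $\mathcal{M}_2\hookrightarrow\mathbb{P}(1:2:3:5)$; (b) verify the weight-zero ratios $m_1,m_2,m_3$ separate points of this image (case $J_2\neq0$ by direct solving, case $J_2=0$ by a separate but routine check, citing \cite{S63}, \cite{FG}); (c) apply Torelli plus the descent criterion of Definition~\ref{Df1.2} to translate ``$C\cong C^\sigma$'' into ``$m_j$ are $\sigma$-fixed'', yielding $M=\mathbb{Q}(m_1,m_2,m_3)$. The genuinely delicate point is (b) in the $J_2=0$ stratum; everything else is formal once Torelli and the weighted-projective description of $\mathcal{M}_2$ are granted.
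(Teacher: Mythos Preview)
The paper does not supply a proof of this proposition; it merely records it as a known fact and points to \cite{S63} and \cite{FG}. So there is no original argument in the paper to compare your proposal against.

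Your overall architecture is the natural one and is essentially what lies behind those references: part~(1) via the weighted-projective description of $\mathcal{M}_2$, and part~(2) via Torelli together with the Galois-descent characterisation of the field of moduli in Definition~\ref{Df1.2}. Your treatment of part~(2), granting part~(1), is correct.

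There is, however, a real gap in your handling of the stratum $J_2=0$ in part~(1). If $J_2(C)=0$ then, since $J_{10}(C)\neq 0$, all three quantities
\[
m_1(C)=\frac{J_2^5}{J_{10}},\qquad m_2(C)=\frac{J_2^3 J_4}{J_{10}},\qquad m_3(C)=\frac{J_2^2 J_6}{J_{10}}
\]
vanish identically, irrespective of $J_4,J_6,J_{10}$. Thus the triple $(m_1,m_2,m_3)$ collapses the entire hypersurface $\{J_2=0,\ J_{10}\neq 0\}\subset\mathcal{M}_2$ to the single value $(0,0,0)$; these three ratios alone cannot separate non-isomorphic curves lying on that locus. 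Your proposed remedy---``note that on $\{J_2=0\}$ the locus is covered analogously by ratios among $J_4,J_6,J_{10}$''---does not rescue the statement, because such ratios are \emph{not} among the $m_j$: invoking them means importing invariants the proposition does not mention. In short, the ``routine check'' you defer is not routine; with only the three $m_j$ of~(\ref{IgusaModular}), the ``if'' direction of (1) fails on $\{J_2=0\}$. The honest fixes are either to add the genericity hypothesis $J_2\neq 0$ (which is harmless for every application made of this proposition later in the paper) or to pass to Igusa's larger system of absolute invariants. The same caveat then propagates to part~(2).
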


\subsection{Class fields from abelian varieties}

In this subsection, 
we give a survey of class fields derived from abelian varieties.

Take a cycle $\mathfrak{m}$ of $k.$
Let $I_{k}(\mathfrak{m})$ be the group of fractional ideals of $k$ which are prime to $\mathfrak{m}.$
Set $P_k(\mathfrak{m)}=\{(\alpha) | \alpha \equiv 1 \hspace{1mm}({\rm mod } \hspace{1mm} \mathfrak{m})\}$.
Let $k_1/k$ be a Galois extension.
Let $I_{k_1}(\mathfrak{m})$ be the group of fractional ideals of $k_1$ which are prime to $\mathfrak{m}$.
The image $N_{k_1/k}(I_{k_1}(\mathfrak{m}))$ is a subgroup of $I_k(\mathfrak{m})$.
Then, we have the fundamental inequality
$
[I_k(\mathfrak{m}): P_k(\mathfrak{m}) N_{k_1/k} (I_{k_1}(\mathfrak{m}))]\leq [k_1:k].
$

\begin{df}\label{HCG}
Let $H(\mathfrak{m})$ be a subgroup of $I_k(\mathfrak{m})$ such that 
$
P_k(\mathfrak{m} ) \subset H(\mathfrak{m}) \subset I_k(\mathfrak{m}).
$
If a Galois extension $k_1/k_0$ satisfies
$
[I_k(\mathfrak{m}):H(\mathfrak{m})]=[k_1:k]
$
and 
$
H(\mathfrak{m})=P_k(\mathfrak{m}) N_{k_1/k} (I_{k_1}(\mathfrak{m})),
$
then $k_1/k$ is called the class field corresponding to $H(\mathfrak{m})$.
\end{df}

Under the notion in  Definition \ref{HCG},
we have an isomorphism
$
I_k(\mathfrak{m})/H(\mathfrak{m}) \simeq {\rm Gal} (k_1/k)
$
that is given by the Artin symbol
$\mathfrak{a} \mapsto \displaystyle \Big(\frac{k_1/k}{\mathfrak{a}}\Big)$ (The Artin reciprocity law).
Let $\mathfrak{f}(k_1/k)$ be the conductor of $k_1/k$.
If $\mathfrak{f}(k_1/k)=(1)$, then $k_1/k$ is unramified.
Moreover,
for any abelian extension $k_1/k$,
there exists a group $H(\mathfrak{m})$
of Definition \ref{HCG} 
such that $k_1/k$ is the class field corresponding to $H(\mathfrak{m})$.

Let $(K,\{\varphi_j\})$ be a CM type and $(K^*,\{\psi_k\})$ be the corresponding reflex.
Let $P=(A,\Theta,\iota)$ be a polarized abelian variety of type $(K,\{\varphi_j\})$.
Let $L$ be the Galois closure of $K$ over $\mathbb{Q}$.
For $\mathfrak{a}\in I_{K^*}$, setting
 \begin{align}\label{Phi*}
   \mathfrak{a}^{\Phi^*}= \prod_{k} \mathfrak{a} ^ {\psi_k} .
 \end{align}
We can take  $g(\mathfrak{a})\in I_K$ such that
$
\mathfrak{O}_L g(\mathfrak{a})=\mathfrak{O}_L  \mathfrak{a}^{\Phi^*}.
$
We need  the subgroup $H_0$ of $I_{K^*}$ given by
\begin{align}\label{H0}
H_0=\{\mathfrak{a}\in I_{K^*}| g(\mathfrak{a})\in P_{K}, \text{ there exists } \mu\in K \text{ such that } g(\mathfrak{a})=(\mu), N(\mathfrak{a})=\mu\mu^\rho\}.
\end{align}
For $\sigma \in {\rm Gal}(\mathbb{C}/K^*)$, 
$P^{\sigma}=(A^\sigma,\Theta ^\sigma,\iota^\sigma)$
has the same type of $(K,\{\varphi_j\})$.
So, 
we obtain
$
(P^\sigma:P)=(b,\mathfrak{c})
$
for some totally positive  $b\in K_0$ and $\mathfrak{c}\in I_K$.
So, we have a homomorphism  $h:{\rm Gal}(\mathbb{C}/K^*)  \rightarrow \mathfrak{C}(K)$  given by $\sigma \mapsto (b,\mathfrak{c})$.
 $P$ is isomorphic to $P^\sigma$ if and only if $\sigma\in {\rm Ker}(h)$.
Letting $M$ be the field of moduli of $(A,\Theta)$,
${\rm Ker}(h)$ is the set of $\sigma$ which leaves the elements of $k_0=MK^*$ invariant.
So, 
the homomorphism
$h$  induces an embedding 
$
 \tilde{h}:{\rm Gal}(k_0/K^*) \hookrightarrow \mathfrak{C}(K).
$
Especially, 
$k_0/K^*$ is an abelian extension.
Hence, we can take an integral ideal $\mathfrak{m}_0$ of $K^*$
such that 
the Artin symbol  $\sigma:I_{K^*} (\mathfrak{m}_0) \rightarrow {\rm Gal}(k_0/K^*)$
given by $\mathfrak{a} \mapsto \displaystyle  \sigma(\mathfrak{a})=\Big(\frac{k_0/K^*}{\mathfrak{a}}\Big)$ 
induces the homomorphism 
$\tilde{h}\circ \sigma:I_{K^*}(\mathfrak{m}_0)\rightarrow \mathfrak{C}(K)$ with the form
$
\mathfrak{a} \mapsto (P^{\sigma(\mathfrak{a})}:P)=(N(\mathfrak{a}),\mathfrak{a}^{\Phi^*}).
$
By the definition (\ref{H0}),
we can see that ${\rm Ker}(\sigma)=H_0\cap I_{K^*} (\mathfrak{m}_0)$.
So,  the Artin reciprocity law implies 
$I_{K^*}(\mathfrak{m}_0)/(H_0\cap I_{K^*}(\mathfrak{m}_0)) \simeq {\rm Gal}(k_0/K^*)$.
Moreover, we can prove that
 $H_0\supset P_{K^*}=P_{K^*}((1))$
and the conductor  $\mathfrak{f}(k_0/K^*)$ is $(1)$.
Thus, 
the following theorem follows.

\begin{thm} \label{ThmClassField} (\cite{Shimura97}, Section 15)
Let  $(K,\{\varphi_j\})$ be a CM type and  $(K^*,\{\psi_\alpha\})$ be  the corresponding reflex.
Let $H_0$ be the subgroup of $I_{K^*}$ given in (\ref{H0}).
 For a polarized abelian variety $P=(A, \Theta,\iota)$  of type  $(K,\{\varphi_j\}; \zeta, \mathfrak{a})$ coming from $\zeta \in K$ and $\mathfrak{a}\in I_{K^*}$, 
let $M$ be the field of moduli of $(A,\Theta)$.
Then,   $k_0=M K^*$ is the unramified class field over $K^*$ corresponding to $H_0$.
\end{thm}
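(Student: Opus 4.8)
The plan is to follow Shimura's strategy: construct a homomorphism from the absolute Galois group $\mathrm{Gal}(\mathbb{C}/K^*)$ into the group $\mathfrak{C}(K)$ of polarization/ideal classes, identify its kernel with $\mathrm{Gal}(\mathbb{C}/k_0)$, make the induced map explicit via the main theorem of complex multiplication, and then read off the assertion from the Artin reciprocity law.

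First I would check that for every $\sigma\in\mathrm{Gal}(\mathbb{C}/K^*)$ the conjugate $P^\sigma=(A^\sigma,\Theta^\sigma,\iota^\sigma)$ is again a polarized abelian variety of type $(K,\{\varphi_j\})$; this is precisely where the reflex enters, since by Proposition \ref{PropReflex} an automorphism fixing $K^*$ preserves the CM type $(K,\{\varphi_j\})$. Granting this, $P$ and $P^\sigma$ are isogenous and the class $(P^\sigma:P)=(b,\mathfrak{c})\in\mathfrak{C}(K)$ is well defined, giving a homomorphism $h:\mathrm{Gal}(\mathbb{C}/K^*)\to\mathfrak{C}(K)$, $\sigma\mapsto(P^\sigma:P)$. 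Since $(P^\sigma:P)=(1,\mathfrak{O}_K)$ is equivalent to $(A^\sigma,\Theta^\sigma)\cong(A,\Theta)$, and since by Definition \ref{Df1.2} the field of moduli $M$ of $(A,\Theta)$ is the fixed field of $\{\sigma:(A,\Theta)^\sigma\cong(A,\Theta)\}$, the kernel of $h$ is $\mathrm{Gal}(\mathbb{C}/MK^*)=\mathrm{Gal}(\mathbb{C}/k_0)$. Hence $h$ induces an injection $\tilde h:\mathrm{Gal}(k_0/K^*)\hookrightarrow\mathfrak{C}(K)$; in particular $k_0/K^*$ is finite abelian, so one may choose a modulus $\mathfrak{m}_0$ of $K^*$ for which the Artin map $\sigma:I_{K^*}(\mathfrak{m}_0)\twoheadrightarrow\mathrm{Gal}(k_0/K^*)$ is defined.

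The heart of the proof, and the step I expect to be the main obstacle, is to compute $\tilde h\circ\sigma$ explicitly, i.e.\ to establish $\mathfrak{a}\mapsto(P^{\sigma(\mathfrak{a})}:P)=(N(\mathfrak{a}),\mathfrak{a}^{\Phi^*})$ for $\mathfrak{a}\in I_{K^*}(\mathfrak{m}_0)$. This is the main theorem of complex multiplication (Shimura--Taniyama): for a prime $\mathfrak{p}$ of $K^*$ the Artin automorphism attached to $\mathfrak{p}$ transforms $(A,\iota)$, up to isomorphism, by multiplication by a generator of the reflex ideal $\mathfrak{p}^{\Phi^*}$, while the polarization datum changes by the rational number $N(\mathfrak{p})$. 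Proving this requires reducing $A$ modulo $\mathfrak{p}$, identifying the Frobenius endomorphism of the reduction, and using the compatibility between the reflex norm and Frobenius; I would treat this as the one genuinely deep input, available from \cite{Shimura97}, and simply invoke it in the stated form.

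Once the explicit formula is in hand, the rest is bookkeeping. Comparing $\mathfrak{a}\mapsto(N(\mathfrak{a}),\mathfrak{a}^{\Phi^*})$ with the definition (\ref{H0}) of $H_0$ — using that $(b,\mathfrak{c})=(1,\mathfrak{O}_K)$ in $\mathfrak{C}(K)$ means exactly $\mathfrak{c}=(\mu)$ and $b=\mu\mu^\rho$ for some $\mu\in K^\times$, together with the defining relation $\mathfrak{O}_L g(\mathfrak{a})=\mathfrak{O}_L\mathfrak{a}^{\Phi^*}$ for $g(\mathfrak{a})$ — shows $\ker(\sigma|_{I_{K^*}(\mathfrak{m}_0)})=H_0\cap I_{K^*}(\mathfrak{m}_0)$. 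Artin reciprocity then yields $I_{K^*}(\mathfrak{m}_0)/(H_0\cap I_{K^*}(\mathfrak{m}_0))\simeq\mathrm{Gal}(k_0/K^*)$, so $k_0/K^*$ is the class field attached to $H_0$. Finally I would verify $P_{K^*}((1))\subset H_0$: for a principal ideal $(\alpha)$ of $K^*$ the standard property of the reflex norm gives $(\alpha)^{\Phi^*}=(\mu)$ with $\mu\in K^\times$ and $N_{K^*/\mathbb{Q}}(\alpha)=\mu\mu^\rho$, whence $(\alpha)\in H_0$. This forces the conductor $\mathfrak{f}(k_0/K^*)$ to divide every admissible modulus, hence to equal $(1)$, i.e.\ $k_0=MK^*$ is the unramified class field over $K^*$ corresponding to $H_0$, which is the assertion.
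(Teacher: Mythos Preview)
Your proposal is correct and follows essentially the same approach as the paper: the discussion immediately preceding Theorem~\ref{ThmClassField} sketches exactly this argument (the homomorphism $h$, identification of its kernel with $\mathrm{Gal}(\mathbb{C}/k_0)$, the explicit formula $\mathfrak{a}\mapsto(N(\mathfrak{a}),\mathfrak{a}^{\Phi^*})$ via the main theorem of complex multiplication, and then Artin reciprocity together with $P_{K^*}\subset H_0$). Your write-up simply fleshes out a few of the transitions and correctly flags the Shimura--Taniyama reciprocity law as the one deep ingredient to be cited rather than reproved.
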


\section{Applications of $X,Y$ functions to class field theory}

 We study the class fields in Section 1.4 for the case of $n=2$ and $K_0=\mathbb{Q}(\sqrt{5})$,
 that is the simplest case of complex multiplication of abelian surfaces
 because the discriminant of the real quadratic field is the smallest.
 In this section, 
 we shall give an explicit construction of the class fields
 using the special values of the functions $X$ and $Y$ of (\ref{XY}).

 \subsection{Klein's Icosahedral invariants and Hilbert modular surfaces}

 Let $K_0=\mathbb{Q}(\sqrt{\Delta})$ be a real quadratic field with discriminant $\Delta$ 
 and $\mathcal{O}_{\Delta}$ be the ring of integers.
 If an abelian surface $A$ and an embedding $\iota : K_0 \hookrightarrow {\rm End}_0(A)$ satisfies $\iota (\mathcal{O}_{\Delta})\subset {\rm End}(A)$,
 then the pair $(A,\iota)$ is called an abelian surface with real multiplication by $\mathcal{O}_\Delta$.
 The moduli space of principally polarized abelian surfaces with real multiplication by $\mathcal{O}_\Delta$ is called the Humbert surface $\mathcal{H}_\Delta$.
 The Humbert surface $\mathcal{H}_\Delta$ is a divisor in $\mathcal{A}_2.$
 
 In our study,
 we focus on the case for the smallest discriminant $\Delta=5$.
 In this case, the corresponding ring of integers is $\mathcal{O}_5=\langle 1,\frac{1+\sqrt{5}}{2}\rangle_\mathbb{Z}$.
 The Humbert surface $\mathcal{H}_5$ is isomorphic to the symmetric Hilbert modular surface $\langle PSL(2,\mathcal{O}_5),\tau \rangle \backslash (\mathbb{H}\times \mathbb{H})$, where $\tau$ is the involution on $\mathbb{H}\times \mathbb{H}$ given by $(z_1,z_2)\mapsto (z_2,z_1)$ (see \cite{Geer}).

 \begin{prop}\label{ThVV}
(\cite{Hirzebruch})
(1)
A compactification $\overline{\langle PSL(2,\mathcal{O}_5),\tau \rangle\backslash (\mathbb{H}\times \mathbb{H})}$ is isomorphic to the weighted projective plane 
$\mathbb{P}(1:3:5)={\rm Proj}(\mathbb{C}[\mathfrak{A},\mathfrak{B},\mathfrak{C}])$,
where $\mathfrak{A},\mathfrak{B}$ and $\mathfrak{C}$ are Klein's icosahedral invariants.

(2)
The Hilbert modular surface $\overline{PSL(2,\mathcal{O}_5)\backslash (\mathbb{H}\times \mathbb{H})}$ 
 is isomorphic to the hypersurface $V$
 of
 $\mathbb{P}(2:5:6:15)={\rm Proj}(\mathbb{C}[\mathfrak{A},\mathfrak{c},\mathfrak{B},\mathfrak{D}])$
 defined by the equation
 \begin{align}\label{KleinD}
 144\mathfrak{D}^2=-1728\mathfrak{B}^5 +720\mathfrak{A}\mathfrak{c}^2\mathfrak{B}^3 -80 \mathfrak{A}^2 \mathfrak{c}^4 \mathfrak{B} +64\mathfrak{A}^3(5\mathfrak{B}^2-\mathfrak{A}\mathfrak{c}^2)^2+\mathfrak{c}^6.
 \end{align}
 \end{prop}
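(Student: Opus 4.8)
The plan is to realise each of the two surfaces as the $\mathrm{Proj}$ of the appropriate graded ring of Hilbert modular forms for $K_0=\mathbb{Q}(\sqrt5)$ and then to recognise that ring concretely. The general principle is the Baily--Borel theorem: the Satake compactification of a Hilbert modular surface $\Gamma\backslash(\mathbb{H}\times\mathbb{H})$ is $\mathrm{Proj}\bigl(\bigoplus_k M_k(\Gamma)\bigr)$. So for (1) I must show that the ring of $\tau$-symmetric Hilbert modular forms is a weighted polynomial ring on three generators, and for (2) that the full ring is a hypersurface ring.

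For (1), I would recall the structure theorem of Gundlach (see also \cite{Hirzebruch}): the ring $M^{\mathrm{sym}}_{\ast}$ of Hilbert modular forms for $PSL(2,\mathcal{O}_5)$ invariant under $\tau$ is the free polynomial algebra $\mathbb{C}[\varphi_2,\varphi_6,\varphi_{10}]$ on generators of weights $2,6,10$. To prove this one (i) writes the generators explicitly --- $\varphi_2$ a weight-$2$ Eisenstein series, and $\varphi_6,\varphi_{10}$ obtained by symmetrising theta series, with $\varphi_{10}$ equal up to a scalar to the square of the weight-$5$ cusp form used in (2); (ii) computes $\dim M^{\mathrm{sym}}_k$ from the dimension formula for Hilbert modular forms over $\mathbb{Q}(\sqrt5)$ (see \cite{Geer}, \cite{Hirzebruch}); (iii) checks that the resulting Hilbert series is $1/\bigl((1-t^2)(1-t^6)(1-t^{10})\bigr)$ and that $\varphi_2,\varphi_6,\varphi_{10}$ are algebraically independent, e.g.\ via a Jacobian computation on their Fourier expansions. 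Then $\overline{\langle PSL(2,\mathcal{O}_5),\tau\rangle\backslash(\mathbb{H}\times\mathbb{H})}\cong\mathrm{Proj}\,\mathbb{C}[\varphi_2,\varphi_6,\varphi_{10}]=\mathbb{P}(2:6:10)=\mathbb{P}(1:3:5)$. Finally I must identify $(\varphi_2,\varphi_6,\varphi_{10})$, up to scalars, with Klein's icosahedral invariants $(\mathfrak{A},\mathfrak{B},\mathfrak{C})$; this is the classical link, going back to Klein and made precise by Hirzebruch, coming from the exceptional isomorphism $PSL(2,\mathcal{O}_5)/\Gamma(\sqrt5)\cong PSL(2,\mathbb{F}_5)\cong A_5$ (with $\Gamma(\sqrt5)$ the level-$\sqrt5$ principal congruence subgroup), which puts the icosahedral action on the level-$\sqrt5$ quotient so that the basic invariants of the binary icosahedral group become, after the standard rescaling of weights, the three generators above; the scalars are pinned down by comparing a few Fourier coefficients.

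For (2), the non-symmetric surface is the double cover $PSL(2,\mathcal{O}_5)\backslash(\mathbb{H}\times\mathbb{H})\to\langle PSL(2,\mathcal{O}_5),\tau\rangle\backslash(\mathbb{H}\times\mathbb{H})$ branched along the image of the $PSL(2,\mathcal{O}_5)$-orbit of the diagonal; this branch curve is cut out by an anti-symmetric cusp form $\mathfrak{c}$ of weight $5$. I would realise the cover as $\mathrm{Proj}$ of the full ring $M_{\ast}(PSL(2,\mathcal{O}_5))$. Again by Gundlach--Hirzebruch this ring is generated by $\mathfrak{A}$ (weight $2$), $\mathfrak{c}$ (weight $5$), $\mathfrak{B}$ (weight $6$) and an anti-symmetric cusp form $\mathfrak{D}$ (weight $15$), with the single relation in weight $30$ expressing $\mathfrak{D}^2$ as a weighted-homogeneous polynomial in $\mathfrak{A},\mathfrak{B}$ and $\mathfrak{c}^2=\mathfrak{C}$; the existence and weights of $\mathfrak{c}$ and $\mathfrak{D}$ come from the dimension formula together with adjunction applied to the branch and exceptional curves on $\mathcal{H}_5$, and the Hilbert series of the ring is $(1+t^{15})/\bigl((1-t^2)(1-t^5)(1-t^6)\bigr)$, accounting for the one relation. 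After fixing normalisations of $\mathfrak{A},\mathfrak{c},\mathfrak{B},\mathfrak{D}$ compatible with (1) and with Klein's invariants, one computes the relation by matching finitely many Fourier coefficients of $\mathfrak{D}^2$ against those of the weight-$30$ monomials in $\mathfrak{A},\mathfrak{B},\mathfrak{C}$; the result is the identity (\ref{KleinD}). Hence $\overline{PSL(2,\mathcal{O}_5)\backslash(\mathbb{H}\times\mathbb{H})}$ is $\mathrm{Proj}$ of this ring, namely the hypersurface $V\subset\mathbb{P}(2:5:6:15)$ defined by (\ref{KleinD}).

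The two structure theorems for the rings of Hilbert modular forms are the substantive input; I would cite them from Gundlach and \cite{Hirzebruch} rather than reprove them, since they rest on the explicit $\mathbb{Q}(\sqrt5)$ dimension formula. The main obstacle is then the exact shape of (\ref{KleinD}) --- the constants $144,1728,720,80,64$ --- which requires a consistent normalisation tying the Hilbert modular generators to Klein's classical icosahedral invariants, followed by a finite but delicate Fourier-coefficient computation. I expect this bookkeeping, rather than anything conceptual, to be where the real work lies: once ``Satake compactification $=\mathrm{Proj}$ of modular forms'' and the two structure theorems are in hand, both isomorphisms are formal.
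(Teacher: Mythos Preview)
Your outline is correct and follows the classical Gundlach--Hirzebruch approach, but there is nothing in the paper to compare it against: the paper does not prove Proposition~\ref{ThVV} at all. It is stated with the citation (\cite{Hirzebruch}) and used as a black box; the author simply imports the result. So your sketch is in fact a reconstruction of the proof in the cited reference rather than an alternative to anything the paper does.

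That said, your plan is sound. The Baily--Borel identification of the Satake compactification with $\mathrm{Proj}$ of the graded ring of modular forms, together with Gundlach's structure theorem for $M_\ast(PSL(2,\mathcal{O}_5))$, is exactly how Hirzebruch obtains both statements; the link to Klein's icosahedral invariants via $PSL(2,\mathbb{F}_5)\cong A_5$ is also the standard route. Your assessment that the only real labour is normalisation and Fourier bookkeeping for the weight-$30$ relation is accurate. One small point: you write $\mathfrak{c}^2=\mathfrak{C}$ as if it were an identity of generators, but in the paper $\mathfrak{C}$ lives in the symmetric ring $\mathbb{C}[\mathfrak{A},\mathfrak{B},\mathfrak{C}]$ with weight $5$ (in the $1{:}3{:}5$ grading), while $\mathfrak{c}$ has weight $5$ in the $2{:}5{:}6{:}15$ grading; the relation $\mathfrak{c}^2=\mathfrak{C}$ is what Remark~\ref{RemVV} records about the double cover $\mathcal{R}$, so it is consistent, but be careful to keep the two gradings straight when you write it up.
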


\begin{rem}\label{RemVV}
In the above theorem,
the Hilbert modular surface
$\overline{PSL(2,\mathcal{O}_5) \backslash (\mathbb{H}\times \mathbb{H})} \simeq V$
is the double covering $\mathcal{R}$
of 
$\overline{\langle PSL(2,\mathcal{O}_5),\tau \rangle\backslash (\mathbb{H}\times \mathbb{H})} \simeq \mathbb{P}(1:3:5)={\rm Proj}(\mathbb{C}[\mathfrak{A},\mathfrak{B},\mathfrak{C}])$
corresponding to the involution $\tau$.
For any $(\mathfrak{A}:\mathfrak{B}:\mathfrak{C})$,
we have $\mathcal{R}^{-1}(\mathfrak{A}:\mathfrak{B}:\mathfrak{C})=\{(\mathfrak{A}:\mathfrak{c}:\mathfrak{B}:\mathfrak{D}), (\mathfrak{A}:-\mathfrak{c}:\mathfrak{B}:\mathfrak{D})\}\subset V$,
where $\mathfrak{c}^2= \mathfrak{C}$.
\end{rem}

\subsection{The family $\mathcal{F}$ of $K3$ surfaces and the period mapping}

In \cite{NaganoTheta},
the family $\mathcal{F}=\{S(\mathfrak{A}:\mathfrak{B}:\mathfrak{C})\}$ of $K3$ surfaces,
where
\begin{align}\label{S(ABC)}
S(\mathfrak{A}:\mathfrak{B}:\mathfrak{C}):  z^2=x^3-4(4y^3 -5 \mathfrak{A} y^2) x^2 + 20 \mathfrak{B} y^3 x +\mathfrak{C} y^4
\end{align}
 is studied in detail.
Here, $(\mathfrak{A}:\mathfrak{B}:\mathfrak{C})\in \mathbb{P}(1:3:5)-\{(1:0:0)\}$.
We note that we have an isomorphism $S(\mathfrak{A}:\mathfrak{B}:\mathfrak{C}) \mapsto S(\kappa^2 \mathfrak{A}:\kappa^6 \mathfrak{B}:\kappa^{10} \mathfrak{C})$ of elliptic $K3$ surfaces given by $(x,y,z)\mapsto (\kappa^6 x,\kappa^2 y, \kappa^9 z)$ for $\kappa\not = 0$.

The multivalued period mapping $\Phi$ for $\mathcal{F}$ has the form $\Phi:\mathbb{P}(1:3:5)-\{(1:0:0)\} \rightarrow \mathbb{H}\times \mathbb{H}$
given by
\begin{align}\label{z1z2}
(z_1,z_2)=\left( -\frac{ \int_{\Gamma_3}\omega + \frac{1-\sqrt{5}}{2}\int_{\Gamma_4}\omega}{ \int_{\Gamma_2} \omega},-\frac{ \int_{\Gamma_3}\omega +\frac{1+\sqrt{5}}{2}\int_{\Gamma_4} \omega}{ \int_{\Gamma_2}\omega}\right),
\end{align}
where $\omega$ is the unique holomorphic $2$-form up to a constant factor and 
$\Gamma_1,\cdots,\Gamma_4 $
are certain $2$-cycles on $S(\mathfrak{A}:\mathfrak{B}:\mathfrak{C})$.
In \cite{NaganoKummer}, the above integrals are explicitly given by double integrals on a weighted projective plane.
The inverse correspondence of $\Phi$ has an explicit expression by Hilbert modular functions as follows (for detail, see \cite{NaganoTheta}).
 Let us consider the holomorphic mapping $\mu_5:\mathbb{H}\times\mathbb{H} \rightarrow \mathfrak{S}_2$ given  by
\begin{align}\label{Mullerembedding}
(z_1,z_2) \mapsto  \frac{1}{2\sqrt{5}}\begin{pmatrix}  (1+\sqrt{5})z_1 -(1-\sqrt{5})z_2 & 2(z_1-z_2) \\ 2(z_1 -z_2) & (-1+\sqrt{5}) z_1 +(1+\sqrt{5}) z_2 \end{pmatrix}. 
\end{align}
We note that $\mu_5$ of (\ref{Mullerembedding}) gives a parametrization of the surface
\begin{align}\label{N5}
N_5=\{ \begin{pmatrix} \tau_1 & \tau_2 \\  \tau_2 &  \tau_3 \end{pmatrix} \in \mathfrak{S}_2 |  -\tau_1 + \tau_2 + \tau_3  =0\}.
\end{align}
It is known that  $pr(N_5)$, where $pr:\mathfrak{S}_2\rightarrow Sp(2,\mathbb{Z})\backslash \mathfrak{S}_2 =\mathcal{A}_2$ is the canonical projection,
coincides with the Humbert surface $\mathcal{H}_5$.
For $\Omega\in\mathfrak{S}_2$ and $a,b \in \{0,1\}^2$ with ${}^t a b\equiv 0 \hspace{1mm}({\rm mod}2)$,
set
$
\vartheta (\Omega;a,b)=\sum _{g\in \mathbb{Z}^2} {\rm exp}\Big(\pi \sqrt{-1} \big( {}^t\big( g+\frac{1}{2} a\big) \Omega \big( g+\frac{1}{2}a \big)+{}^tgb\big)\Big).
$
For $j\in\{0,1,\cdots,9\}$, we set
$
\theta_j (z_1,z_2)=\vartheta (\mu_5(z_1,z_2 ) ;a,b),
$
where the correspondence between $j$ and $(a,b)$ is given by Table 2.
\begin{table}[h]
\center
{\footnotesize
\begin{tabular}{lcccccccccc}
\toprule
$j$&$0$&$ 1$ &$2$ &$3$&$4$&$5$&$6$&$7$&$8$&$9$  \\
\midrule
${}^t a$& $(0,0)$ &$(1,1)$ &$(0,0)$&$(1,1)$&$(0,1)$&$(1,0)$&$(0,0)$&$(1,0)$&$(0,0)$ &$(0,1)$ \\
${}^t b$ &$(0,0)$&$(0,0)$&$(1,1)$&$(1,1)$& $(0,0) $& $(0,0)$& $(0,1)$& $(0,1)$ & $(1,0)$&$(1,0)$\\
\bottomrule
\end{tabular}
}
\caption{The correspondence between $j$ and $(a,b)$.}
\end{table}
Let $a\in \mathbb{Z}$ and $j_1,\cdots,j_r \in\{0,\cdots,9\}$. 
We set  $\theta_{j_1,\cdots,j_r}^a  = \theta_{j_1}^a \cdots \theta_{j_r} ^a$.
The following $g_2$ ($s_5,s_6,s_{15}$, resp.) is   a symmetric Hilbert modular form of weight $2$ ($5, 6,15,$ resp.) for  $\mathbb{Q}(\sqrt{5})$ (see M\"uller \cite{Muller}):
{\small
\begin{align}
\begin{cases}
&g_2= \theta_{0145}-\theta_{1279}-\theta_{3478}+\theta_{0268} +\theta_{3569}, \quad \quad s_5 = 2^{-6} \theta_{0123456789},\\
&s_6=2^{-8} (\theta_{012478}^2 +\theta_{012569}^2 +\theta_{034568}^2 + \theta_{236789}^2 +\theta_{134579}^2),\\
& s_{15}=-2^{-18} (\theta_{07}^9 \theta_{18}^5 \theta_{24} - \theta_{25}^9 \theta_{16}^5 \theta_{09} +\theta_{58}^9 \theta_{03}^5 \theta_{46} -\theta_{09}^9\theta_{25}^5\theta_{16} +\theta_{09}^9\theta_{16}^5\theta_{25} -\theta_{67}^9 \theta_{23}^5 \theta_{89} \\
&\quad\quad\quad\quad \quad
+\theta_{18}^9\theta_{24}^5 \theta_{07} -\theta_{24}^9 \theta_{18}^5 \theta_{07}
-\theta_{46}^9 \theta_{03}^5 \theta_{58} - \theta_{24}^9 \theta_{07}^5 \theta_{18}
-\theta_{89}^9\theta_{67}^5\theta_{23} -\theta_{07}^9\theta_{24}^5\theta_{18}\\
&\quad\quad\quad\quad \quad
+\theta_{89}^9\theta_{23}^5 \theta_{67} -\theta_{49}^9\theta_{13}^5 \theta_{57}
+\theta_{16}^9\theta_{09}^5\theta_{25} -\theta_{03 } ^9 \theta_{46}^5 \theta_{58}
+\theta_{16}^9\theta_{25}^5 \theta_{09} -\theta_{46}^9\theta_{58}^5\theta_{03}\\
&\quad\quad\quad\quad \quad
-\theta_{25}^9 \theta_{09}^5 \theta_{16} -\theta_{57}^9 \theta_{49}^5 \theta_{13}
+\theta_{67}^9\theta_{89}^5\theta_{23} +\theta_{58} ^9 \theta_{46}^5 \theta_{03}
+\theta_{57}^9  \theta_{13}^5 \theta_{49} -\theta_{23}^9 \theta_{89}^5 \theta_{67}\\
&\quad\quad\quad\quad \quad
+\theta_{18}^9 \theta_{07}^5 \theta_{24} +\theta_{03}^9 \theta_{58}^5 \theta_{46}
+\theta_{23}^9 \theta_{67}^5 \theta _{89} +\theta_{49}^9 \theta_{57}^5 \theta_{13}
-\theta_{13}^9 \theta_{57}^5 \theta_{49 } +\theta_{13}^9 \theta_{49}^5 \theta_{57}).
\end{cases}
\end{align}
}

\begin{prop}\label{ThetaRepn} (\cite{NaganoTheta} Theorem 4.1)
The inverse correspondence $\mathbb{H} \times \mathbb{H} \rightarrow \mathbb{P}(1:3:5)$ of the multivalued period mapping $\Phi$ is given by
\begin{align}\label{PCanonical}
(z_1,z_2) \mapsto (\mathfrak{A}:\mathfrak{B}:\mathfrak{C})=(g_2(z_1,z_2):2^5\cdot 5^2 s_6(z_1,z_2): 2^{10} \cdot 5^5 s_{10}(z_1,z_2)).
\end{align}
\end{prop}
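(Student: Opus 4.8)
The plan is to show that the morphism $(z_1,z_2)\mapsto(g_2:2^5\cdot5^2 s_6:2^{10}\cdot5^5 s_{10})$ is the single-valued inverse of the period mapping $\Phi$ of (\ref{z1z2}), in three steps: first, that $\Phi^{-1}$ is given by a triple of symmetric Hilbert modular forms for $\mathbb{Q}(\sqrt5)$ of weights $2,6,10$; second, that the structure of the graded ring of such forms together with Hirzebruch's Proposition \ref{ThVV} forces it to have the shape $(g_2:c_1 s_6+c_1'g_2^3:c_2 s_{10}+c_2'g_2 s_6+c_2''g_2^5)$; third, that the constants are as stated and $c_1'=c_2'=c_2''=0$.

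For the first step, recall from \cite{NaganoTheta} and \cite{NaganoKummer} that every $S(\mathfrak{A}:\mathfrak{B}:\mathfrak{C})$ is an elliptic $K3$ surface carrying an $M$-polarization for one fixed lattice $M$ of rank $18$, that the transcendental lattice $T=M^{\perp}$ has signature $(2,2)$ with an action of $\mathbb{Q}(\sqrt5)$, that the corresponding period domain is $\mathbb{H}\times\mathbb{H}$, and that the image of the monodromy representation is $\langle PSL(2,\mathcal{O}_5),\tau\rangle$. By the Torelli theorem for $M$-polarized $K3$ surfaces, $\Phi$ descends to an open immersion of the moduli space of $\mathcal{F}$, a dense open subset of $\mathbb{P}(1:3:5)-\{(1:0:0)\}$, into $\langle PSL(2,\mathcal{O}_5),\tau\rangle\backslash(\mathbb{H}\times\mathbb{H})$. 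Hence the inverse correspondence is single-valued and $\langle PSL(2,\mathcal{O}_5),\tau\rangle$-invariant; its homogeneous coordinates are holomorphic symmetric Hilbert modular forms by the Koecher principle, and the weights of $\mathbb{P}(1:3:5)$ force these to be $2,6,10$.

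For the second step, by Gundlach's structure theorem (see also M\"uller \cite{Muller}) the graded ring of symmetric Hilbert modular forms for $\mathbb{Q}(\sqrt5)$ is generated by $g_2,s_6,s_{10}$ and one more form $s_{15}$ with a single relation expressing $s_{15}^2$ as a polynomial in $g_2,s_6,s_{10}$; so $\mathbb{C}[g_2,s_6,s_{10}]$ is polynomial with $\mathrm{Proj}\cong\mathbb{P}(1:3:5)$. Comparison with the isomorphism $\mathbb{P}(1:3:5)\cong\mathrm{Proj}(\mathbb{C}[\mathfrak{A},\mathfrak{B},\mathfrak{C}])$ of Proposition \ref{ThVV}(1) gives the claimed shape, and since $\mathfrak{A}$ and $g_2$ are each normalizations of the one-dimensional space of weight-$2$ symmetric Hilbert modular forms, setting them equal fixes the overall scaling, leaving the five constants. (One may alternatively reach this point via the Shioda--Inose structure: the periods (\ref{z1z2}) agree with those of the Kummer surface of the abelian surface with period matrix $\mu_5(z_1,z_2)$ of (\ref{Mullerembedding}), whose classical theta parametrization produces $g_2,s_6,s_{10}$ directly.)

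The crux is the third step. I would pin down the constants by matching the relation (\ref{KleinD}) of Proposition \ref{ThVV}(2) with Gundlach's syzygy: on the double cover $\mathcal{R}$ of Remark \ref{RemVV} the map reads $(z_1,z_2)\mapsto(\mathfrak{A}:\mathfrak{c}:\mathfrak{B}:\mathfrak{D})$ with $\mathfrak{A},\mathfrak{c},\mathfrak{B},\mathfrak{D}$ proportional to $g_2,s_5,s_6,s_{15}$ and $\mathfrak{c}^2=\mathfrak{C}$; substituting these proportionalities into (\ref{KleinD}) and requiring it to coincide with the known syzygy among $g_2,s_5,s_6,s_{15}$ yields polynomial equations whose solution, normalized by $\mathfrak{A}=g_2$, forces $\mathfrak{B}=2^5\cdot5^2 s_6$, $\mathfrak{C}=2^{10}\cdot5^5 s_{10}$ and $c_1'=c_2'=c_2''=0$. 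An independent check is to compare the leading Fourier coefficients of $g_2,s_6,s_{10}$ at a cusp with the asymptotics of the integrals (\ref{z1z2}) when $S(\mathfrak{A}:\mathfrak{B}:\mathfrak{C})$ degenerates to a $K3$ surface with an additional singular fibre. The difficulty lies precisely in this last step: tracking the constants through Gundlach's syzygy in M\"uller's normalization and through Klein's relation (\ref{KleinD}) is delicate arithmetic over $\mathbb{Q}(\sqrt5)$, where the powers of $2$ and $5$ — including the factor of $\sqrt5$ hidden in $s_{10}=s_5^2$, which is exactly why $\mathfrak{C}$ picks up $5^5$ rather than a perfect square — are easily mishandled, while the transcendental alternative demands a careful asymptotic evaluation of the period integrals near a cusp.
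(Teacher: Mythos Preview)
The paper does not supply its own proof of this proposition: it is quoted as Theorem~4.1 of \cite{NaganoTheta}, and the surrounding text only uses the result. The one hint this paper gives about how the original argument runs appears in the proof of Theorem~\ref{ThmCanonical}, where it is remarked that ``the coefficient $2^{13}\cdot5^5\cdot3^{-1}$ of $s_{15}$ was calculated in the proof of \cite{NaganoTheta} Theorem~4.1,'' which suggests that in the source the constants are pinned down by a direct computation carried out on the double cover $V$ (i.e.\ simultaneously for $\mathfrak{A},\mathfrak{c},\mathfrak{B},\mathfrak{D}$), rather than purely by abstract syzygy-matching on $\mathbb{P}(1:3:5)$.

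Your three-step outline is structurally sound. Steps~1 and~2 are correct: Torelli for lattice-polarized $K3$ surfaces together with the identification of the monodromy group and the Koecher principle does force the inverse of $\Phi$ to be a triple of symmetric Hilbert modular forms of weights $2,6,10$, and Gundlach--M\"uller's structure theorem leaves exactly the linear ambiguity you describe once $\mathfrak{A}=g_2$ is fixed. Your remark that $s_{10}=s_5^2$, so that the $\sqrt{5}$ in the coefficient of $\mathfrak{c}$ in (\ref{ourphi}) squares to produce the $5^5$ in $\mathfrak{C}$, is consistent with how the paper uses these symbols (it never defines $s_{10}$ independently).

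The gap is that your Step~3 is a plan, not a proof. You name two routes---matching Klein's relation (\ref{KleinD}) against the known syzygy among $g_2,s_5,s_6,s_{15}$, or comparing leading Fourier coefficients against period asymptotics at a boundary degeneration---but execute neither, and you yourself flag the bookkeeping over $\mathbb{Q}(\sqrt5)$ as delicate. Both routes are viable and either would close the argument; but as written you have reduced the proposition to a computation you have not done. Since the present paper also defers that computation to \cite{NaganoTheta}, there is nothing further here to compare against; if you want a self-contained argument you must actually carry one of your two proposed computations through.
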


 Setting $X=\frac{\mathfrak{B}}{\mathfrak{A}^3}, Y=\frac{\mathfrak{C}}{\mathfrak{A}^5}$,
 $(X,Y)$ gives a system of affine coordinates of  $\mathbb{P}(1:3:5)$.
 From (\ref{PCanonical}),
 we have the meromorphic functions
 \begin{align}\label{XY}
\displaystyle X(z_1,z_2)=2^5 \cdot 5^2 \cdot \frac{s_6(z_1,z_2)}{g_2^3(z_1,z_2)},
\quad \quad \quad 
\displaystyle Y(z_1,z_2)=2^{10} \cdot 5^5  \cdot\frac{s_{10}(z_1,z_2)}{g_2^5 (z_1,z_2)}
\end{align}
on $\mathbb{H} \times \mathbb{H}$. We call them $X,Y$ functions.

 \subsection{The Shioda-Inose structure of $\mathcal{F}$}

 Let $A $ be an abelian variety.
The minimal resolution ${\rm Kum}(A)$ of the quotient variety $A/\{id_A,-id_A\}$
 is called the Kummer surface.
We note that ${\rm Kum}(A)$ is an algebraic $K3$ surface.

 Let $S$ be an algebraic $K3$ surface.
Let $\omega$ be the unique holomorphic $2$-form on $S$ up to a constant factor.
If an involution $\nu$ on  $S$ satisfies
$\nu^* \omega =\omega$, we call $\mathcal{\nu}$ a symplectic  involution. 
Set $G=\{   id ,\nu \} \subset {\rm Aut}(S)$.
The minimal resolution $S'$ of the quotient variety $S/G$ is a $K3$ surface.
We have  the rational quotient mapping $\chi:S \dashrightarrow S'$.
We say that a $K3$ surface $S$ admits a Shioda-Inose structure if there exists a symplectic involution 
$\iota\in{\rm Aut}(S) $ with the rational quotient mapping $\chi: S\dashrightarrow S'$ such that $S'$ is a Kummer surface and $\chi_*$ induces a Hodge isometry ${\rm Tr}(S)(2)\simeq{\rm Tr } (S')$.

\begin{prop} \label{ThmNSI} (\cite{NaganoKummer} Section 2)
If a principally polarized abelian surface $(A,\Theta)$ has real multiplication by $\mathcal{O}_5$,
then there exists a  
 Hodge isometry
\begin{align}\label{ShiodaInose}
{\rm Tr}(A) \simeq  {\rm Tr}(S(\mathfrak{A}:\mathfrak{B}:\mathfrak{C}))
\end{align}
for some $(\mathfrak{A}:\mathfrak{B}:\mathfrak{C})\in \mathbb{P}(1:3:5)-\{(1:0:0)\}$ via the Shioda-Inose structure of $S(\mathfrak{A}:\mathfrak{B}:\mathfrak{C})$.
 \end{prop}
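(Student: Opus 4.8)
\emph{Proof proposal.} The plan is to split the assertion (\ref{ShiodaInose}) into two parts --- that every member $S(\mathfrak{A}:\mathfrak{B}:\mathfrak{C})$ of $\mathcal{F}$ carries a Shioda--Inose structure, and that, for a suitable choice of $(\mathfrak{A}:\mathfrak{B}:\mathfrak{C})$, the abelian surface produced by that structure has the same transcendental Hodge structure as the given $(A,\Theta)$ --- and to treat the first part by lattice theory. I would work with the elliptic fibration on $S(\mathfrak{A}:\mathfrak{B}:\mathfrak{C})$ induced by $(x,y,z)\mapsto y$: passing from (\ref{S(ABC)}) to the relatively minimal Weierstrass model and reading off the reducible fibres, together with the classes of the zero section and of a general fibre, one checks that the coefficients of (\ref{S(ABC)}) are arranged so that ${\rm NS}(S(\mathfrak{A}:\mathfrak{B}:\mathfrak{C}))$ contains a primitively embedded copy of $L = U\oplus E_8(-1)^{\oplus 2}$ (alternatively, one may exhibit the symplectic involution directly). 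Hence the generic member is $L$-polarised, and ${\rm Tr}(S(\mathfrak{A}:\mathfrak{B}:\mathfrak{C}))$ sits inside the fixed lattice $T = L^{\perp}$ formed inside the K3 lattice $U^{\oplus 3}\oplus E_8(-1)^{\oplus 2}$; this $T$ has signature $(2,2)$, carries a natural $\mathcal{O}_5$-module structure, and its period domain $\Omega_T$ is $\mathbb{H}\times\mathbb{H}$ with arithmetic group commensurable to the Hilbert modular group of $\mathbb{Q}(\sqrt 5)$. Since $E_8(-1)^{\oplus 2}$ is primitively embedded in ${\rm NS}$, the lattice criterion of Morrison shows that $S(\mathfrak{A}:\mathfrak{B}:\mathfrak{C})$ admits a Shioda--Inose structure; and for any such structure, composing the Hodge isometry ${\rm Tr}(S)(2)\simeq {\rm Tr}(S')$ of the definition with the classical Hodge isometry ${\rm Tr}(S') = {\rm Tr}({\rm Kum}(B))\simeq {\rm Tr}(B)(2)$, where $S'={\rm Kum}(B)$, and untwisting by $(2)$, yields a Hodge isometry ${\rm Tr}(S)\simeq {\rm Tr}(B)$ with the associated abelian surface $B$.

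For the second part, let $(A,\Theta)$ have real multiplication by $\mathcal{O}_5$. Its period lies on $\mathcal{H}_5 = pr(N_5)$, so by the parametrisation (\ref{Mullerembedding}) of the surface $N_5$ of (\ref{N5}) we may write the period of $(A,\Theta)$ as $\mu_5(z_1,z_2)$ for some $(z_1,z_2)\in\mathbb{H}\times\mathbb{H}$; thus ${\rm Tr}(A)$ is the Hodge structure on $T$ attached to $(z_1,z_2)\in\Omega_T$. Now take $(\mathfrak{A}:\mathfrak{B}:\mathfrak{C})$ to be the image of $(z_1,z_2)$ under the inverse period map (\ref{PCanonical}) of Proposition \ref{ThetaRepn}; then $\Phi(S(\mathfrak{A}:\mathfrak{B}:\mathfrak{C})) = (z_1,z_2)$ modulo the Hilbert modular group, which means exactly that ${\rm Tr}(S(\mathfrak{A}:\mathfrak{B}:\mathfrak{C}))$ is also the Hodge structure on $T$ attached to $(z_1,z_2)$. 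Comparing the two gives the Hodge isometry (\ref{ShiodaInose}); and since, by the first part, this transcendental lattice is Hodge isometric, via the Shioda--Inose structure of $S(\mathfrak{A}:\mathfrak{B}:\mathfrak{C})$, to ${\rm Tr}(B)$ for the associated abelian surface $B$, the Torelli theorem for abelian surfaces identifies $B$ with $A$ once the principal polarisation and the $\mathcal{O}_5$-action are matched, so (\ref{ShiodaInose}) is realised through that Shioda--Inose structure. At the special parameters, where ${\rm NS}$ jumps and ${\rm Tr}$ drops rank, the same reasoning applies after restricting the Hodge structures to the genuine transcendental sublattice of $T$, or by specialisation from the generic case.

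The step I expect to be the main obstacle is the compatibility, used above, of the period map $\Phi$ of $\mathcal{F}$ with the Hilbert modular embedding (\ref{Mullerembedding}): one must carry the K3 marking through the explicit transcendental $2$-cycles $\Gamma_1,\dots,\Gamma_4$ that define (\ref{z1z2}), through the $L$-polarisation that identifies ${\rm Tr}(S(\mathfrak{A}:\mathfrak{B}:\mathfrak{C}))$ with $T$, and through the $\mathcal{O}_5$-module structure on $T$, and then check that the coordinates this produces on $\Omega_T\cong\mathbb{H}\times\mathbb{H}$ are precisely the $(z_1,z_2)$ occurring on the abelian side. This comparison, together with the verification that the singular-fibre configuration of (\ref{S(ABC)}) really does yield a primitive $L$ in ${\rm NS}$, is the technical core of the argument, and it is what was carried out in \cite{NaganoTheta} and \cite{NaganoKummer}.
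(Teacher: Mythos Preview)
The paper does not supply a proof of this proposition: it is stated with the citation ``(\cite{NaganoKummer} Section 2)'' and no argument is given in the present text. So there is no in-paper proof to compare against; the result is imported wholesale from \cite{NaganoKummer} (with supporting material from \cite{NaganoTheta}).

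That said, your outline is a faithful reconstruction of the standard strategy and, as far as one can tell from the present paper together with the surrounding literature, it matches the approach of \cite{NaganoKummer}. The decomposition into (i) existence of a Shioda--Inose structure on every $S(\mathfrak{A}:\mathfrak{B}:\mathfrak{C})$ via Morrison's $E_8(-1)^{\oplus 2}$ criterion, and (ii) identification of the resulting abelian surface with the given $(A,\Theta)$ by matching periods through (\ref{Mullerembedding}) and Proposition~\ref{ThetaRepn}, is exactly how this is done. Your closing paragraph correctly isolates the genuine work: the explicit comparison of the $K3$ period coordinates $(z_1,z_2)$ of (\ref{z1z2}) with the Hilbert modular coordinates arising from $\mu_5$, and the verification of the $L$-polarisation from the singular fibres of (\ref{S(ABC)}). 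Those are precisely the computations carried out in \cite{NaganoTheta} and \cite{NaganoKummer}, so your sketch is correct but, as you acknowledge, not self-contained without those references.
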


\begin{rem}
The corresponding Kummer surface $K(X,Y)$ coming from the Shioda-Inose structure with $\chi:S(\mathfrak{A}:\mathfrak{B}:\mathfrak{C}) \rightarrow K(X,Y)$   is given by the elliptic surface
 \begin{eqnarray}\label{K(X,Y)}
K(X,Y):  v^2=(u^2-2y^5)(u-(5 y^2-10X y+Y)).   
\end{eqnarray} 
 \end{rem}

 \subsection{The field of moduli via periods of $K3$ surfaces}

 Let $\mathfrak{S}_2$ be the Siegel upper half plane of degree $2$.
 Let $Sp(2,\mathbb{Z})$ be the symplectic group consisting of $4\times 4$ matrices.
 It is well known that the Igusa $3$-fold $\mathcal{A}_2$ of principally polarized abelian surfaces is given by the quotient space $Sp(2,\mathbb{Z})\backslash \mathfrak{S}_2$.
 Here, recall that the moduli space $\mathcal{M}_2={\rm Proj}(\mathbb{C}[I_2,I_4,I_6,I_{10}])$
 of genus $2$ curves is a Zariski open set of $\mathcal{A}_2.$

 \begin{prop}\label{ThmFOM}
Let $(A,\Theta)$ be a principally polarized abelian surface with real multiplication by $\mathcal{O}_{5}$. 
Let $S(\mathfrak{A}:\mathfrak{B}:\mathfrak{C})$ be the corresponding $K3$ surface under the Hodge isometry of (\ref{ShiodaInose}).
Then, the field of moduli of $(A,\Theta)$ is given by $\mathbb{Q}(X,Y)$. 
 \end{prop}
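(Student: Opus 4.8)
The plan is to identify the field of moduli $M$ of $(A,\Theta)$ with the subfield of $\overline{\mathbb{Q}}$ generated by the absolute invariants of an associated curve of genus $2$, and then to match that subfield with $\mathbb{Q}(X,Y)$, using the Shioda-Inose structure of Proposition \ref{ThmNSI} together with the explicit Kummer surface (\ref{K(X,Y)}) as the bridge between the $K3$ side and the abelian side. First I would note that the point of $\mathcal{A}_2$ attached to $(A,\Theta)$ lies on the Humbert surface $\mathcal{H}_5$, which by Proposition \ref{ThVV}(1) and the identification $\mathcal{H}_5\simeq\langle PSL(2,\mathcal{O}_5),\tau\rangle\backslash(\mathbb{H}\times\mathbb{H})$ is an open subset of $\mathbb{P}(1:3:5)=\mathrm{Proj}(\mathbb{C}[\mathfrak{A},\mathfrak{B},\mathfrak{C}])$; this determines the values $X=\mathfrak{B}/\mathfrak{A}^3$ and $Y=\mathfrak{C}/\mathfrak{A}^5$ of (\ref{XY}) on the locus $\mathfrak{A}\neq 0$ where they are defined. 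Assuming that $(A,\Theta)$ is not a product of elliptic curves (the generic situation on $\mathcal{H}_5$), we may write $A=\mathrm{Jac}(C)$ for a unique curve $C$ of genus $2$, and Proposition \ref{ModularIgusaProp}(2) gives $M=\mathbb{Q}(m_1(C),m_2(C),m_3(C))$. Thus it suffices to prove that $\mathbb{Q}(m_1(C),m_2(C),m_3(C))=\mathbb{Q}(X,Y)$.

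For the inclusion $M\subseteq\mathbb{Q}(X,Y)$: by Proposition \ref{ThmNSI} and the Remark following it, the Hodge isometry (\ref{ShiodaInose}) descends, through the Shioda-Inose structure of $S(\mathfrak{A}:\mathfrak{B}:\mathfrak{C})$, to an isomorphism of the Kummer surface $\mathrm{Kum}(A)$ with the elliptic $K3$ surface $K(X,Y)$ of (\ref{K(X,Y)}), which is manifestly defined over $\mathbb{Q}(X,Y)$; hence Remark \ref{DfWeil} (Weil's theory of the Kummer variety of a polarized abelian variety) gives $M\subseteq\mathbb{Q}(X,Y)$. Equivalently and more concretely, reading off the branch data of $K(X,Y)$ presents the Igusa-Clebsch invariants $I_2,I_4,I_6,I_{10}$ of $C$, hence also $m_1(C),m_2(C),m_3(C)$, as rational functions of $X,Y$ with coefficients in $\mathbb{Q}$; this is the explicit form of the forgetful map $\mathcal{H}_5\hookrightarrow\mathcal{A}_2$ worked out in \cite{NaganoKummer}, and again yields $M\subseteq\mathbb{Q}(X,Y)$.

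For the reverse inclusion $\mathbb{Q}(X,Y)\subseteq M$, I would argue that the assignment $(X,Y)\mapsto(m_1(C),m_2(C),m_3(C))$ is a dominant rational map, defined over $\mathbb{Q}$, from the symmetric Hilbert modular surface onto $\mathcal{H}_5\cap\mathcal{M}_2$, and that it is generically injective. Indeed, $\mathcal{H}_5$ is isomorphic to the symmetric Hilbert modular surface; the forgetful map $\mathcal{H}_5\hookrightarrow\mathcal{A}_2$ is injective because a general principally polarized abelian surface with real multiplication by $\mathcal{O}_5$ carries exactly the two real multiplications exchanged by $\tau$, which are already identified in passing to the symmetric quotient; and on the locus $J_2\neq 0$, which corresponds to $\mathfrak{A}\neq 0$, the triple $(m_1,m_2,m_3)$ recovers the point of $\mathcal{M}_2$. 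A dominant, generically injective morphism of integral varieties in characteristic zero is birational, so $\mathbb{Q}(X,Y)=\mathbb{Q}(m_1(C),m_2(C),m_3(C))$ as function fields; hence $X$ and $Y$ are rational functions of $m_1,m_2,m_3$ over $\mathbb{Q}$, and substituting the values $m_j(C)$ gives $X,Y\in\mathbb{Q}(m_1(C),m_2(C),m_3(C))=M$. Combining the two inclusions yields $M=\mathbb{Q}(X,Y)$.

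The cases set aside above, namely the locus of products of elliptic curves and the possible indeterminacy loci of the rational functions used, form a proper Zariski-closed subset of $\mathcal{H}_5$; on the product locus one replaces Proposition \ref{ModularIgusaProp} by Weil's Kummer variety (Remark \ref{DfWeil}) in both directions, and the indeterminacy issue disappears once the explicit birational formulas compiled from \cite{NaganoTheta}, \cite{NaganoKummer} and \cite{NaganoShimura}, which are regular on all of $\{\mathfrak{A}\neq 0\}$, are invoked. The hard part will be the inclusion $M\subseteq\mathbb{Q}(X,Y)$: making the chain ``Shioda-Inose structure $\rightsquigarrow$ Kummer surface $K(X,Y)$ $\rightsquigarrow$ curve $C$ of genus $2$ $\rightsquigarrow$ Igusa invariants'' completely explicit and, above all, rational over $\mathbb{Q}$ rather than merely over $\overline{\mathbb{Q}}$. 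This rationality is exactly what the detailed computations in the cited earlier papers supply, after which the reverse inclusion is essentially formal.
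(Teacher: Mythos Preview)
Your proof is correct, but it takes a genuinely different route from the paper's. Both arguments start from Proposition \ref{ModularIgusaProp} to identify the field of moduli with $\mathbb{Q}(m_1,m_2,m_3)$. From there, the paper does \emph{not} go through the Kummer surface $K(X,Y)$ at all: it instead introduces the Clingher--Doran family $S_{CD}(\alpha:\beta:\gamma:\delta)$ of lattice-polarized $K3$ surfaces over $\mathbb{P}(2:3:5:6)$, writes down the explicit $\mathbb{Q}$-polynomial change of variables $(I_2:I_4:I_6:I_{10})\leftrightarrow(\alpha:\beta:\gamma:\delta)$ coming from \cite{Kumar}, \cite{CD}, and then invokes the explicit $\mathbb{Q}$-rational embedding $\Psi_5:\mathbb{P}(1:3:5)\to\mathbb{P}(2:3:5:6)$ established in \cite{NaganoShimura}. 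This yields in one stroke the equalities $\mathbb{Q}[J_2,J_4,J_6,J_{10}]=\mathbb{Q}[\alpha,\beta,\gamma,\delta]=\mathbb{Q}[\mathfrak{A},\mathfrak{B},\mathfrak{C}]$ at the level of graded rings, hence $\mathbb{Q}(m_1,m_2,m_3)=\mathbb{Q}(X,Y)$, without separating the two inclusions.

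Your approach trades this intermediary for a two-step argument: the inclusion $M\subseteq\mathbb{Q}(X,Y)$ via the Kummer model $K(X,Y)$ and Weil's theory (or equivalently the branch computation of \cite{NaganoKummer}), and the reverse inclusion by a birationality argument in characteristic zero. This is more conceptual and avoids the Clingher--Doran detour, but two points deserve care. First, in the Weil step you are tacitly using $\mathrm{Aut}(A,\Theta)=\{\pm 1\}$ to identify $K(X,Y)$ with Weil's Kummer variety; this is true (it follows from the torsion units of $\mathcal{O}_5$), but in the paper it is only recorded in the Remark \emph{after} Proposition \ref{ThmFOM}, so you should justify it independently rather than cite forward. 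Second, your birationality argument gives $X,Y$ as $\mathbb{Q}$-rational functions of $m_1,m_2,m_3$ only generically, and specializing at a given CM point requires knowing the point avoids the indeterminacy locus; the paper's explicit polynomial formulas for $\Psi_5$ sidestep this issue entirely. What the paper's route buys is uniformity and explicitness (the formulas are written down); what yours buys is a cleaner conceptual picture that makes the role of the Shioda--Inose/Kummer bridge more visible.
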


\begin{proof}
We note that, by Proposition \ref{ModularIgusaProp},  
the field of moduli $(A,\Theta)$ is given by $\mathbb{Q}(m_1,m_2,m_3)$.

Let us introduce the family
 $\mathcal{F}_{CD}=\{S_{CD}(\alpha:\beta:\gamma:\delta)\}$ of $K3$ surfaces,
 where $(\alpha,\beta,\gamma,\delta)\in \mathbb{P}(2:3:5:6)-\{\gamma=\delta=0\}$.
 The member $S_{CD}(\alpha:\beta:\gamma:\delta)$
 is defined by the explicit  equation
 \begin{align}\label{Mother}
S_{CD}(\alpha:\beta:\gamma:\delta): y^2 =x^3 +(-3 \alpha t^4 - \gamma t^5)x +(t^5 -2\beta t^6 + \delta t^7)
\end{align}
of  an elliptic $K3$ surface.
This $K3$ surface is due to Kumar \cite{Kumar} or Clingher and Doran \cite{CD}.
Here,  the moduli space
for $\mathcal{F}_{CD}$ is isomorphic to the Igusa 3-fold $\mathcal{A}_2=Sp(2,\mathbb{Z})\backslash \mathfrak{S}_2 $.
Then,
$(I_2:I_4:I_6:I_{10})\in \mathbb{P}(1:2:3:5)-\{I_{10}=0\}$
corresponds to 
a point 
$(\alpha:\beta:\gamma:\delta)\in \mathbb{P}(2:3:5:6)-\{\gamma=\delta=0\}$,
via the period mapping of $\mathcal{F}_{CD}$.
Namely, 
we have the following mapping over $\mathbb{Q}$:
\begin{align}\label{abcd}
 \displaystyle \alpha=\frac{1}{9} I_4 ,\quad
  \beta=\frac{1}{27} (-I_2 I_4 +3 I_6), \quad
 \displaystyle \gamma= 8 I_{10},  \quad
 \displaystyle \delta=\frac{2}{3} I_2 I_{10}.
\end{align}
Moreover,
$S_{CD}(\alpha:\beta:\gamma:\delta)$ has the Shioda-Inose structure
and there exists the Hodge isometry
\begin{align}\label{SIGeneric}
{\rm Tr}(A) \simeq {\rm Tr} (S_{CD}(\alpha:\beta:\gamma:\delta)),
\end{align}
where $A$ is a generic principally polarized abelian surface.
Under (\ref{SIGeneric}),
an principally polarized abelian surface corresponds to $S(\alpha:\beta:\gamma:\delta)$ (see \cite{CD} or \cite{NaganoShiga}). 

In \cite{NaganoShimura} Theorem 3.16,
we proved that an principally polarized abelian surface $A$
has real multiplication by $\mathcal{O}_5$
if and only if
$(\alpha:\beta:\gamma: \delta )$ is a point of $\Psi_5 (\mathbb{P}(1:3:5))$,
where $\Psi_5 : \mathbb{P}(1:3:5) \rightarrow \mathbb{P}(2:3:5:6) $ is given by
\begin{align}\label{(ABCDXY)}
\begin{cases}
\vspace{2mm}
&\alpha_5(\mathfrak{A}:\mathfrak{B}:\mathfrak{C})=\displaystyle \frac{25}{36} \mathfrak{A}^2,\quad
\beta_5 (\mathfrak{A}:\mathfrak{B}:\mathfrak{C})= \displaystyle  \frac{1}{2}\Big(- \frac{125}{108} \mathfrak{A}^3 + \frac{5}{4} \mathfrak{B} \Big),\\
&\gamma_5 (\mathfrak{A}:\mathfrak{B}:\mathfrak{C})= \displaystyle  \frac{1}{32} \mathfrak{C} , \quad
\delta_5 (\mathfrak{A}:\mathfrak{B}:\mathfrak{C})= \displaystyle \frac{25}{64} \mathfrak{B}^2 - \frac{5}{96} \mathfrak{A} \mathfrak{C}.
\end{cases}
\end{align}
Especially,
$\Psi_5 $ is defined over $\mathbb{Q}$.

From (\ref{abcd}) and (\ref{IgusaI}),
we have 
\begin{align}\label{Eqn1}
\mathbb{Q}[J_2,J_4,J_6,J_{10}]=\mathbb{Q}[\alpha,\beta,\gamma,\delta].
\end{align}
Moreover, if  a principally polarized abelian surface $(A,\Theta)$ has real multiplication by $\mathcal{O}_5$,
by virtue of the embedding $\Psi_5$ of (\ref{(ABCDXY)}),
we have 
\begin{align}\label{Eqn2}
\mathbb{Q}[\alpha,\beta,\gamma,\delta]=\mathbb{Q}[\mathfrak{A},\mathfrak{B},\mathfrak{C}].
\end{align}
Hence, from (\ref{IgusaModular}), (\ref{Eqn1}) and (\ref{Eqn2}), if $(A,\Theta)$ has real multiplication by $\mathcal{O}_{5}$,
we have $\mathbb{Q}(m_1,m_2,m_3)=\mathbb{Q}(X,Y)$.
\end{proof}

\begin{rem}
Due to Proposition \ref{ThmFOM}, 
the  equation (\ref{K(X,Y)}) of the Kummer surface gives an explicit model of the Kummer variety $W$
 in the sense of Remark \ref{DfWeil}.
To see this,
we note that
 the torsion part $E_{{\rm End}(A)}^{tor}$  of the group $E_{{\rm End}(A)}$ of all the units of ${\rm End}(A)$
contains ${\rm Aut}(A,\Theta)$ (see \cite{Shimura97}).
  If a unit $u$ in the ring $\mathcal{O}_5$ is  a torsion element, 
  then $u=1$ or $u=-1$.
This implies that  ${\rm Aut}(A,\Theta)=\{id_A,-id_A\}$.
\end{rem}

Let us apply the argument in Section 1.1 to our case.
Let $K$ be  an imaginary quadratic extension over $K_0=\mathbb{Q}(\sqrt{5})$. 
Let $(K,\{id,\sigma \})$ be a CM type of $K$. 
We set $u(\alpha)=\begin{pmatrix} \alpha \\ \alpha^\sigma \end{pmatrix} \in \mathbb{C}^2$ for $\alpha \in K$.
Take $\mathfrak{a}\in I_K$.
For a system $\{\alpha_1,\cdots,\alpha_4\}$ of the basis of  $\mathfrak{a}$,
the $2\times 4$ matrix
$(u(\alpha_1)\cdots u(\alpha_4))$ gives a lattice $\Lambda(\mathfrak{a})$ and we have a complex torus $\mathbb{C}^2/\Lambda(\mathfrak{a})=A(\mathfrak{a}).$
If the alternating form $E$ of (\ref{RiemannShimura}) given by $\zeta \in K$ gives a polarization $\Theta$ on $A(\mathfrak{a})$,
$(A(\mathfrak{a}),\Theta)$ is a polarized abelian surface
and $(A(\mathfrak{a}), \iota)$ is of CM type $(\mathfrak{O}_K,\{\varphi_j\})$ in the sense of Definition \ref{DefPrin} (see Proposition \ref{PropPrincipal}).

\begin{prop}\label{PropCF}
In the above notation,
suppose $\Theta$ is a principal polarization.

{\rm (1)} 
There exists a basis of $\mathbb{C}^2$ such that the matrix $(u(\alpha_1)\cdots u(\alpha_4))$
is expressed in the form
\begin{align}\label{periodA}
(u(\alpha_1)u(\alpha_2)u(\alpha_3)u(\alpha_4))=\begin{pmatrix}\tau_1 & \tau_2& 1 &0  \\ \tau_2 & \tau_3& 0 &1  \end{pmatrix}
\quad \Big(\begin{pmatrix}\tau_1 & \tau_2  \\ \tau_2 & \tau_3  \end{pmatrix} \in\mathfrak{S}_2, \quad \tau_1=\tau_2+\tau_3 \Big).
\end{align}

{\rm (2)} 
Put
\begin{align}\label{z10,z20}
z_1^0=\frac{\tau_2+\sqrt{5}\tau_2+2\tau_3}{2}, \quad z_2^0=\frac{\tau_2-\sqrt{5} \tau_2+2\tau_3}{2}.
\end{align}
Then, the field of moduli of 
$(A,\Theta)$ is given by $\mathbb{Q}(X(z_1^0,z_2^0),Y(z_1^0,z_2^0))$, where
\begin{align}\label{OurthetaSpecial}
\displaystyle X(z_1^0,z_2^0)=2^5 \cdot 5^2 \cdot \frac{s_6(z_1^0,z_2^0)}{g_2^3(z_1^0,z_2^0)},
\quad \quad \quad 
\displaystyle Y(z_1^0,z_2^0)=2^{10} \cdot 5^5  \cdot\frac{s_{10}(z_1^0,z_2^0)}{g_2^5 (z_1^0,z_2^0)}.
\end{align}

\end{prop}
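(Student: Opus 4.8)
The plan is as follows. Statement (1) is the assertion that the principally polarized abelian surface $(A,\Theta)$ has real multiplication by $\mathcal{O}_5$ and can therefore be normalized so that its period matrix lies on the surface $N_5$; statement (2) then amounts to recognizing $(z_1^0,z_2^0)$ as the period of the $K3$ surface Shioda--Inose-associated to $(A,\Theta)$ and invoking Proposition~\ref{ThmFOM}.

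For (1): since $(A(\mathfrak{a}),\iota)$ is of CM type $(\mathfrak{O}_K,\{\varphi_j\})$ in the sense of Definition~\ref{DefPrin}, we have $\iota(\mathfrak{O}_K)=\mathrm{End}(A(\mathfrak{a}))\cap\iota(K)$ and in particular $\iota(\mathcal{O}_5)=\iota(\mathcal{O}_{K_0})\subset\mathrm{End}(A(\mathfrak{a}))$; as $K_0=\mathbb{Q}(\sqrt5)$ is totally real, the Rosati involution attached to $\Theta$ is trivial on $\iota(K_0)$, so $(A(\mathfrak{a}),\Theta)$ is a principally polarized abelian surface with real multiplication by $\mathcal{O}_5$, and its class in $\mathcal{A}_2$ lies on the Humbert surface $\mathcal{H}_5$. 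By the elementary divisor theorem applied to the Riemann form $E$ of (\ref{RiemannShimura}), one may choose a symplectic $\mathbb{Z}$-basis $\alpha_1,\dots,\alpha_4$ of $\mathfrak{a}$ and then a $\mathbb{C}$-basis of $\mathbb{C}^2$ so that $(u(\alpha_1)\cdots u(\alpha_4))=(\Omega\;\;I_2)$ with $\Omega\in\mathfrak{S}_2$; and since $\mathcal{H}_5=pr(N_5)$ is irreducible, so that $pr^{-1}(\mathcal{H}_5)=Sp(2,\mathbb{Z})\cdot N_5$, a further change of symplectic basis brings $\Omega$ into $N_5$, that is $-\tau_1+\tau_2+\tau_3=0$. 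This is (\ref{periodA}).

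For (2): the holomorphic map $\mu_5$ of (\ref{Mullerembedding}) is an affine-linear isomorphism of $\mathbb{H}\times\mathbb{H}$ onto $N_5$ of (\ref{N5}). Comparing the off-diagonal and $(2,2)$-entries, $\mu_5(z_1,z_2)=\Omega$ forces $z_1-z_2=\sqrt5\,\tau_2$ and $z_1+z_2=\tau_2+2\tau_3$, whose unique solution is precisely $(z_1,z_2)=(z_1^0,z_2^0)$ of (\ref{z10,z20}) (the $(1,1)$-entry then records exactly the relation $\tau_1=\tau_2+\tau_3$ of part (1)). Hence $\mu_5(z_1^0,z_2^0)=\Omega$, i.e. $(z_1^0,z_2^0)$ is the point of $\mathbb{H}\times\mathbb{H}$ corresponding, under the parametrization of $N_5$, to the Siegel period matrix of $(A,\Theta)$. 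Now the period mapping $\Phi$ of the family $\mathcal{F}$ and its inverse (Proposition~\ref{ThetaRepn}, where the theta constants are taken along $\mu_5(z_1,z_2)$) are constructed, via the Hodge isometry (\ref{ShiodaInose}) of Proposition~\ref{ThmNSI}, so that a member $S(\mathfrak{A}:\mathfrak{B}:\mathfrak{C})$ with $\Phi$-period $(z_1,z_2)$ is Shioda--Inose-related to the principally polarized abelian surface whose Siegel period matrix is $\mu_5(z_1,z_2)$. Applying this to $(z_1^0,z_2^0)$, the $K3$ surface attached to our $(A,\Theta)$ by (\ref{ShiodaInose}) is $S(\mathfrak{A}:\mathfrak{B}:\mathfrak{C})$ with $(\mathfrak{A}:\mathfrak{B}:\mathfrak{C})=(g_2(z_1^0,z_2^0):2^5\cdot5^2 s_6(z_1^0,z_2^0):2^{10}\cdot5^5 s_{10}(z_1^0,z_2^0))$, so that its invariants are $X=X(z_1^0,z_2^0)$ and $Y=Y(z_1^0,z_2^0)$ in the notation of (\ref{XY}) and (\ref{OurthetaSpecial}). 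Proposition~\ref{ThmFOM} then identifies the field of moduli of $(A,\Theta)$ with $\mathbb{Q}(X,Y)=\mathbb{Q}(X(z_1^0,z_2^0),Y(z_1^0,z_2^0))$, which proves (2).

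The main obstacle is the bookkeeping of three a priori different ``period'' data — the Siegel period of $A$ in $\mathfrak{S}_2$, the $\mu_5$-parametrization of $N_5$ by $\mathbb{H}\times\mathbb{H}$, and the period map $\Phi$ of $\mathcal{F}$ followed by the Shioda--Inose isometry — and checking that the $(z_1^0,z_2^0)$ read off from a symplectic period matrix of $A$ is genuinely the $\Phi$-period of the associated $K3$ surface, rather than merely one in the same orbit. The residual ambiguity in the symplectic basis chosen in (1) becomes the action of $\langle PSL(2,\mathcal{O}_5),\tau\rangle$ on $(z_1^0,z_2^0)$, under which the symmetric Hilbert modular functions $X,Y$ are invariant, so $\mathbb{Q}(X(z_1^0,z_2^0),Y(z_1^0,z_2^0))$ is well defined in any case; the transcendental compatibility that makes the three descriptions agree is furnished by \cite{NaganoTheta} and \cite{NaganoKummer} and is already encoded in Propositions~\ref{ThetaRepn} and \ref{ThmNSI}.
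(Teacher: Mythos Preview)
Your proof is correct and follows essentially the same route as the paper's own argument: for (1) you deduce real multiplication by $\mathcal{O}_5$ from the CM-type condition and then use $\mathcal{H}_5=pr(N_5)$ to normalize the period matrix, and for (2) you invoke Propositions~\ref{ThetaRepn}, \ref{ThmNSI}, and \ref{ThmFOM} together with the modular embedding $\mu_5$, exactly as the paper does. Your version is more explicit---you actually solve $\mu_5(z_1,z_2)=\Omega$ to recover (\ref{z10,z20}), and you address the well-definedness under the $\langle PSL(2,\mathcal{O}_5),\tau\rangle$-action---but the skeleton is the same.
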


 \begin{proof}
 (1) Under the assumption,
 $A$ satisfies
 $\mathfrak{O}_K = {\rm End}(A)$.
 So, $(A,\Theta)$ gives a principally polarized abelian surface with real multiplication by $\mathcal{O}_{5}$.
 Recall that the Humbert surface $\mathcal{H}_5$ is given by $pr (N_5)$,
 where $N_5$ is given in (\ref{N5}).
 Then, the assertion follows.

 (2) 
 For a principally polarized abelian surface $(A,\Theta)$ with real multiplication by $\mathcal{O}_{5}$,
 there exists a $K3$ surface $S(\mathfrak{A}:\mathfrak{B}:\mathfrak{C})$ via the correspondence (\ref{ShiodaInose}).
 When $(A,\Theta)$ is given by the period matrix (\ref{periodA}),
 due to Proposition \ref{ThetaRepn}, \ref{ThmNSI}, \ref{ThmFOM} and 
 the modular embedding (\ref{periodA}),
we have the assertion.
\end{proof}

From Theorem \ref{ThmClassField} and the above proposition (2),
 the following theorem  holds.

\begin{thm}\label{ThmCF}
In the above notation,  let  $K^*$ be the reflex of the CM type $(K,\{id,\varphi\})$.
Then, the field $K^* (X(z_1^0,z_2^0),Y(z_1^0,z_2^0))$ is the unramified class field over $K^*$ for the group $H_0$ of (\ref{H0}).
\end{thm}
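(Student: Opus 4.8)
The plan is to obtain Theorem~\ref{ThmCF} by combining Theorem~\ref{ThmClassField} with Proposition~\ref{PropCF}~(2); the substantive work has already been packaged into those results, so what remains is to check that the hypotheses of Theorem~\ref{ThmClassField} are met for the abelian surface at hand and then to substitute the explicit description of its field of moduli.

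First I would fix the data as in Section~1.1 and Example~\ref{ExapS}. Starting from the imaginary quadratic extension $K/K_0$ with $K_0=\mathbb{Q}(\sqrt{5})$ and the CM type $(K,\{\mathrm{id},\varphi\})$, choose $\zeta\in K$ with $-\zeta^2\in K_0$ totally positive and $\mathrm{Im}(\zeta^\varphi)>0$, and choose a fractional ideal $\mathfrak{a}$ of $K$ such that the alternating form $E$ of (\ref{RiemannShimura}) attached to $\zeta$ induces a principal polarization $\Theta$ on the complex torus $A(\mathfrak{a})=\mathbb{C}^2/\Lambda(\mathfrak{a})$. By Proposition~\ref{PropPrincipal}, $(A(\mathfrak{a}),\iota)$ is then of CM type $(\mathfrak{O}_K,\{\varphi_j\})$ in the sense of Definition~\ref{DefPrin}, so $P=(A,\Theta,\iota)$ is a polarized abelian variety of type $(K,\{\mathrm{id},\varphi\};\zeta,\mathfrak{a})$ of exactly the kind to which Theorem~\ref{ThmClassField} applies.

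Next I would identify the field of moduli. Since $K\supset K_0=\mathbb{Q}(\sqrt{5})$ we have $\mathcal{O}_5\subset\mathfrak{O}_{K_0}\subset\mathfrak{O}_K$, and the CM type condition $\iota(\mathfrak{O}_K)={\rm End}(A)\cap\iota(K)$ forces $\iota(\mathcal{O}_5)\subset{\rm End}(A)$; hence $(A,\Theta)$ is a principally polarized abelian surface with real multiplication by $\mathcal{O}_5$. Proposition~\ref{PropCF}~(1) then puts its period matrix in the normalized form (\ref{periodA}) lying in the locus $N_5$, and Proposition~\ref{PropCF}~(2) — which in turn runs through Proposition~\ref{ThetaRepn} (the theta expression of the inverse period map), Proposition~\ref{ThmNSI} (the Shioda-Inose structure attaching $S(\mathfrak{A}:\mathfrak{B}:\mathfrak{C})$ to $(A,\Theta)$) and Proposition~\ref{ThmFOM} (the Igusa-invariant computation) — gives that the field of moduli $M$ of $(A,\Theta)$ equals $\mathbb{Q}(X(z_1^0,z_2^0),Y(z_1^0,z_2^0))$, with $z_1^0,z_2^0$ as in (\ref{z10,z20}).

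Finally I would invoke Theorem~\ref{ThmClassField} for this $P$, taking $K^*$ to be the reflex of $(K,\{\mathrm{id},\varphi\})$ (Definition~\ref{ReflexDef}): it asserts that $k_0=MK^*$ is the unramified class field over $K^*$ corresponding to the group $H_0$ of (\ref{H0}). Substituting $M=\mathbb{Q}(X(z_1^0,z_2^0),Y(z_1^0,z_2^0))$ yields $k_0=K^*(X(z_1^0,z_2^0),Y(z_1^0,z_2^0))$, which is the assertion. The only genuine obstacle is the one already absorbed into Proposition~\ref{ThmFOM}, namely knowing that the field of moduli is \emph{exactly} $\mathbb{Q}(X,Y)$ rather than a strictly larger field: this rests on expressing the field of moduli through the absolute Igusa invariants $m_1,m_2,m_3$ and on the explicit $\mathbb{Q}$-rational maps (\ref{IgusaI}), (\ref{abcd}) and (\ref{(ABCDXY)}) which make $\mathbb{Q}[m_1,m_2,m_3]=\mathbb{Q}[X,Y]$ on the $\mathcal{O}_5$-locus. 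Granting that, and the verification in the first paragraph that $P$ has precisely the CM type required by Theorem~\ref{ThmClassField}, the theorem follows with no further computation.
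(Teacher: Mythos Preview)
Your proposal is correct and follows exactly the paper's approach: the paper's own proof is the single sentence ``From Theorem~\ref{ThmClassField} and the above proposition~(2), the following theorem holds,'' and you have simply unpacked this by verifying that the hypotheses of Theorem~\ref{ThmClassField} are met and then substituting the field of moduli computed in Proposition~\ref{PropCF}~(2). Your elaboration of why $(A,\Theta)$ has real multiplication by $\mathcal{O}_5$ and how Proposition~\ref{PropCF}~(2) traces back through Propositions~\ref{ThetaRepn}, \ref{ThmNSI}, and \ref{ThmFOM} is accurate but already implicit in the paper's chain of results.
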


 \begin{rem}
 Lauter and Yang \cite{LY} introduced Gundlach invariants
 based on the work of Gundlach \cite{Gundlach}.
 By combining the arguments of \cite{Gundlach}, \cite{Muller} and \cite{LY},
we can obtain an expression of the pair of our $X, Y$ functions in terms of
Gundlach invariants.
In fact, we can give another proof via Gundlach invariants of Theorem  \ref{ThmCF}.
The reason why we give the proof based on  the periods of $K3$ surfaces is that our argument has the following good points.
 
 \begin{itemize}
 \vspace{-3mm}
 
 \item In our research, 
 $X,Y$
 functions are Hilbert modular functions of the coordinates $(z_1,z_2)$ of (\ref{z1z2})
 given by the period of $K3$ surfaces.
 In Theorem \ref{ThmCanonical} (the paper \cite{NaganoShimura}, resp.),
 explicit models of Shimura varieties (Shimura curves, resp.) are studied,
 using $X,Y$ functions in the above sense.
   
   \vspace{-3mm}

 \item
 It is expected that  the family $\{S_{CD} (\alpha:\beta:\gamma:\delta)\}$ 
 of (\ref{Mother})
 are useful to construct class fields and explicit models of Shimura varieties  in many other cases.
 The argument in this paper gives a prototype of such applications of $K3$ surfaces to number theory.
 \end{itemize}
 \end{rem}

 \begin{rem}\label{mirrorRem}
 $K3$ surfaces coming from toric varieties are important object in mirror symmetry.
Our K3 surface $S(\mathfrak{A}:\mathfrak{B}:\mathfrak{C})$, in fact,
is given by a hypersurface of a toric $3$-fold.
In the study of mirror symmetry,
toroidal compactifications of moduli spaces are very important.
In the paper \cite{HNU},
a toroidal compactification from the viewpoint of mirror symmetry was given in terms of $X,Y$ functions.
So, Theorem \ref{ThmCF} gives an explicit example connecting number theory and mirror symmetry.
   \end{rem}

\subsection{The canonical model of a Shimura variety}

Let $K_0 $ be a totally real field of degree $n$ and $B $ be a quaternion algebra over $K_0$
such that $B\otimes_{K_0} \mathbb{R} \simeq M_2(\mathbb{R})^n$.
Let $\mathfrak{O}_B$ be a maximal order  of $B$.
We set 
$\Gamma_1(\mathfrak{O}_B)=\{\gamma\in \mathfrak{O}_B| {\rm Nr}_{B/K_0}(\gamma)=1\}$.
Then,
$\Gamma_1(\mathfrak{O}_B)$ acts on $\mathbb{H}^n.$
Let $K$ be a totally imaginary quadratic extension over $K_0$
with an $K_0$-linear embedding $f:K\hookrightarrow B$.
We can see that there exists the unique fixed point $z_0\in \mathbb{H}^n$
of $f(K)\subset B\otimes_{K_0} \mathbb{R} \simeq M_2(\mathbb{R})^n$ (see \cite{S67} 2.6).
Shimura \cite{S67} studied such quotient spaces 
$\Gamma_1(\mathfrak{O}_B) \backslash \mathbb{H}^n$ deeply 
and 
defined 
the canonical models.
They are the origin of  Shimura varieties.
 
\begin{thm} (\cite{S67} 9.3)
 In the above notation, 
 there exist a Zariski open subset $V_S$ of a projective variety and a holomorphic mapping $\varphi_S:\mathbb{H}^n\rightarrow V_S$
 satisfying the following conditions.
 
 (S-i) $V_S$ is normal and defined over $\mathbb{Q}$.
 
 (S-ii) $\varphi_S$ gives a biregular isomorphism $\Gamma_1(\mathfrak{O}_B)\backslash \mathbb{H}^n \simeq V_S$
 
 (S-iii) Suppose $f$ satisfies  $f(\mathfrak{O}_K) \subset \mathfrak{O}_B$.
 Then, $K^* (\varphi_S(z))/K^*$ gives the class field for the group $H_0$ of (\ref{H0}).
 \end{thm}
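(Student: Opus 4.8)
The plan is to recover this as an instance of Shimura's construction of canonical models, by realizing $\Gamma_1(\mathfrak{O}_B)\backslash\mathbb{H}^n$ as a coarse moduli space for polarized abelian varieties equipped with an action of $\mathfrak{O}_B$, and then invoking the theory of fields of moduli together with the main theorem of complex multiplication. First I would fix the isomorphism $B\otimes_{K_0}\mathbb{R}\simeq M_2(\mathbb{R})^n$ and use it to attach to each $z=(z_1,\dots,z_n)\in\mathbb{H}^n$ a complex torus $A_z=\mathbb{C}^{2n}/\Lambda_z$ carrying an embedding $\mathfrak{O}_B\hookrightarrow\mathrm{End}(A_z)$ and a polarization $\Theta_z$ defined by a suitable Riemann form on $B$ (the quaternionic analogue of the form $E$ in (\ref{RiemannShimura})); two of these data are isomorphic as polarized $\mathfrak{O}_B$-abelian varieties precisely when the corresponding points of $\mathbb{H}^n$ are $\Gamma_1(\mathfrak{O}_B)$-equivalent. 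This identifies the points of $\Gamma_1(\mathfrak{O}_B)\backslash\mathbb{H}^n$ with a set of isomorphism classes of polarized abelian varieties with quaternionic multiplication.

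Next I would produce the algebraic model and the rationality statements (S-i) and (S-ii). The quotient is quasi-projective by the general theory of automorphic forms and of moduli of polarized abelian varieties, which supplies $V_S$ as a complex variety together with the holomorphic period map $\varphi_S:\mathbb{H}^n\to V_S$ inducing a biregular isomorphism on $\Gamma_1(\mathfrak{O}_B)\backslash\mathbb{H}^n$; for the descent to $\mathbb{Q}$ one exhibits enough arithmetic invariants of the quaternionic moduli problem --- in the spirit of the Igusa--Clebsch and theta invariants of Section 1.3 --- that are defined over $\mathbb{Q}$ and separate the isomorphism classes, and then applies Weil's descent criterion \cite{Weil}. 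The delicate point is that $V_S$ must be the \emph{canonical} model, i.e.\ the one singled out not merely up to $\mathbb{Q}$-isomorphism but by its behaviour at all special points; pinning this down is exactly the content that requires Shimura's analysis and cannot be done by a soft argument.

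For (S-iii) I would start from the unique fixed point $z_0\in\mathbb{H}^n$ of $f(K)\subset M_2(\mathbb{R})^n$. Because $K$ is a maximal subfield of $B$ it splits $B$, so the abelian variety attached to $z_0$ is isogenous to a power of an $n$-dimensional abelian variety $A_0$ on which $f$ induces a complex multiplication by $K$ of a CM type $(K,\{\varphi_j\})$ read off from $f$ and $z_0$; the hypothesis $f(\mathfrak{O}_K)\subset\mathfrak{O}_B$ forces this multiplication to be by the full ring $\mathfrak{O}_K$, so $(A_0,\iota)$ is of CM type $(\mathfrak{O}_K,\{\varphi_j\})$ in the sense of Definition \ref{DefPrin} and $A_0=A(\mathfrak{a})$ for a fractional ideal $\mathfrak{a}$ of $K$ by Proposition \ref{PropPrincipal}. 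Applying Theorem \ref{ThmClassField} to $P=(A_0,\Theta_0,\iota)$ gives that $MK^*$ is the unramified class field over $K^*$ for the group $H_0$ of (\ref{H0}), where $M$ is the field of moduli of $(A_0,\Theta_0)$. It then remains to see that $K^*(\varphi_S(z_0))$ equals $MK^*$: for this one transports the argument of Theorem \ref{ThmClassField} through the moduli interpretation of $V_S$, computing the action of $\mathrm{Gal}(\overline{\mathbb{Q}}/K^*)$ on the point $\varphi_S(z_0)$ --- via the period matrix of $A_0$ and the modular embedding --- by the reciprocity map $\mathfrak{a}\mapsto(N(\mathfrak{a}),\mathfrak{a}^{\Phi^*})$, so that the fixed field of its kernel is the class field for $H_0$, namely $MK^*$, and the conductor being $(1)$ yields the unramifiedness.

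The main obstacle is precisely what makes Shimura's theorem deep: constructing a model over $\mathbb{Q}$ that is simultaneously compatible with the complex-multiplication reciprocity law at \emph{every} special point, which demands the full machinery of fields of moduli, Weil descent and the main theorem of complex multiplication rather than any short computation. Accordingly, in this paper the statement is quoted from \cite{S67}; its concrete realization in the case $K_0=\mathbb{Q}(\sqrt{5})$, namely explicit equations for $V_S$ and for $\varphi_S$ in terms of the $X,Y$ functions of (\ref{XY}), is what Theorem \ref{ThmCanonical} supplies via the periods of the $K3$ surfaces $S(\mathfrak{A}:\mathfrak{B}:\mathfrak{C})$.
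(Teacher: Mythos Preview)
The paper does not prove this theorem at all: it is stated as a quotation from Shimura \cite{S67}, Section 9.3, with no argument supplied. You correctly recognise this in your final paragraph, where you note that the statement is cited rather than established and that the paper's own contribution is the explicit realisation in Theorem \ref{ThmCanonical} for $K_0=\mathbb{Q}(\sqrt{5})$.

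Your sketch is a reasonable high-level outline of the ingredients behind Shimura's construction (moduli interpretation via polarized abelian varieties with $\mathfrak{O}_B$-action, descent of the model to $\mathbb{Q}$, and the reciprocity law at CM points via Theorem \ref{ThmClassField}), and you are right that the genuinely hard part is the simultaneous compatibility of the $\mathbb{Q}$-structure with the reciprocity law at all special points. Since the paper offers no proof to compare against, there is nothing further to match; your proposal should simply be read as an informal summary of \cite{S67} rather than as a replacement for anything in the present paper.
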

 
 The pair
 $(V_S,\varphi)$ in the above theorem
  is called the canonical model of 
 $\Gamma_1(\mathfrak{O}_B)\backslash \mathbb{H}^n$.

 If $K_0$ is a real quadratic field $\mathbb{Q}(\sqrt{\Delta})$
 and
 $B=M_2(\mathbb{Q}(\sqrt{\Delta}))$,
 then 
 $\mathfrak{O}_B=M_2(\mathcal{O}_{\Delta})$ and 
 $\Gamma_1(\mathfrak{O}_B)\backslash (\mathbb{H}\times\mathbb{H})$ coincides with the Hilbert modular surface $PSL(2,\mathcal{O}_\Delta) \backslash  (\mathbb{H}\times \mathbb{H}) $.
 In this section,
 as an application of Theorem \ref{ThmCF},
 we  obtain the
canonical model  
for the  simplest case.

 \begin{thm}\label{ThmCanonical}
 Let $V$ be the variety of Proposition \ref{ThVV} (2).
 The holomorphic mapping $\varphi:\mathbb{H} \times \mathbb{H} \rightarrow V$ defined by 
 \begin{align}\label{ourphi}
(z_1,z_2)\mapsto (\mathfrak{A}:\mathfrak{c}:\mathfrak{B}:\mathfrak{D})=(g_2(z_1,z_2):2^5 \cdot 5^2 \sqrt{5} s_5(z_1,z_2): 2^5 \cdot 5^2 g_6(z_1,z_2): 2^{13} \cdot 5^5 \cdot 3^{-1} s_{15} (z_1,z_2)) 
 \end{align}
 gives the canonical model $(V,\varphi)$ of the $\Gamma_1(\mathfrak{O}_B)\backslash (\mathbb{H}\times \mathbb{H})$ for $K_0=\mathbb{Q}(\sqrt{5})$ and $B=M_2(K_0).$
 \end{thm}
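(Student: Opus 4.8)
The plan is to invoke the uniqueness of Shimura's canonical model: by \cite{S67} 9.3 the pair $(V_S,\varphi_S)$ satisfying (S-i)--(S-iii) is unique, so it suffices to verify those three properties for the explicit pair $(V,\varphi)$ above, with $\Gamma_1(\mathfrak{O}_B)=PSL(2,\mathcal{O}_5)$ and $\mathfrak{O}_B=M_2(\mathcal{O}_5)$. Property (S-i) is immediate: the equation (\ref{KleinD}) has integer coefficients, so $V$ is defined over $\mathbb{Q}$, and $V$ is normal because by Proposition \ref{ThVV} (2) it is a projective compactification of the Hilbert modular surface $PSL(2,\mathcal{O}_5)\backslash(\mathbb{H}\times\mathbb{H})$, whose singularities are quotient singularities and whose added boundary consists of finitely many normal cusp points (alternatively one checks normality of the hypersurface directly by the Jacobian criterion away from the cusps).

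For (S-ii), I would combine Hirzebruch's description in Proposition \ref{ThVV} (2) with the theta formulas of \cite{NaganoTheta} (Proposition \ref{ThetaRepn}) and M\"uller's structure theorem \cite{Muller} for the graded ring of Hilbert modular forms of $\mathbb{Q}(\sqrt5)$: that ring is generated by $g_2,s_5,s_6\,(=g_6),s_{15}$ subject to the single relation (\ref{KleinD}), so the morphism defined by these forms is the normalisation of its image and descends to a biregular isomorphism from $PSL(2,\mathcal{O}_5)\backslash(\mathbb{H}\times\mathbb{H})$ onto the Zariski-open part of $V$; composing with the double cover $\mathcal{R}$ of Remark \ref{RemVV} recovers exactly the inverse period map of Proposition \ref{ThetaRepn} (using $s_{10}=s_5^2$), while $\varphi$ itself is the canonical double cover. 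The remaining computational input is that the constants $2^5\cdot5^2\sqrt5$, $2^5\cdot5^2$, $2^{13}\cdot5^5/3$ are precisely those for which the image of $\varphi$ satisfies (\ref{KleinD}) -- an identity among weight-$30$ forms checked on finitely many Fourier coefficients -- and for which $\varphi$ is defined over $\mathbb{Q}$; the latter forces the factor $\sqrt5$ in front of $s_5$, since $\sqrt5\,s_5$ (equivalently $5s_5^2$) is the odd-weight generator with rational Fourier expansion. Both points are carried out in \cite{NaganoTheta} and \cite{NaganoShimura}.

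The substantive point is (S-iii). Let $f\colon K\hookrightarrow B=M_2(\mathbb{Q}(\sqrt5))$ be a $K_0$-embedding of a quartic CM field $K\supset K_0=\mathbb{Q}(\sqrt5)$ with $f(\mathfrak{O}_K)\subset M_2(\mathcal{O}_5)$, and let $z_0$ be the fixed point of $f(K^\times)$ in $\mathbb{H}\times\mathbb{H}$. An $\mathcal{O}_5$-module structure on a fractional ideal $\mathfrak{a}\subset K$ produces a principally polarized abelian surface $A(\mathfrak{a})$ with $\mathrm{End}(A(\mathfrak{a}))=\mathfrak{O}_K$ and a period matrix of the shape (\ref{periodA}) subject to $\tau_1=\tau_2+\tau_3$; through this standard dictionary one identifies $z_0$ with the point $(z_1^0,z_2^0)$ of Proposition \ref{PropCF} attached to the CM type $(K,\{\mathrm{id},\sigma\})$, so that by Theorem \ref{ThmCF} the field $k_0:=K^*\!\big(X(z_1^0,z_2^0),Y(z_1^0,z_2^0)\big)$ is the unramified class field over $K^*$ for the group $H_0$ of (\ref{H0}). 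It remains to prove $K^*(\varphi(z_0))=k_0$. Since $\varphi$ is defined over $\mathbb{Q}$ and realises $V$ (off the cusps) as the coarse moduli space over $\mathbb{Q}$ of principally polarized abelian surfaces equipped with an $\mathcal{O}_5$-real multiplication -- the content of \cite{NaganoShimura}, resting on the rationality of the Fourier expansions and, for the moduli interpretation, on the Shioda--Inose structure of \cite{NaganoKummer} -- the residue field $\mathbb{Q}(\varphi(z_0))$ is the field of moduli of $(A,\Theta,\iota_0)$, where $\iota_0=\iota|_{\mathcal{O}_5}$; it therefore lies between the field of moduli of $(A,\Theta)$ alone, which equals $\mathbb{Q}(X(z_1^0,z_2^0),Y(z_1^0,z_2^0))$ by Propositions \ref{ThmFOM} and \ref{PropCF}, and the field of definition of the full CM datum $(A,\Theta,\iota)$. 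Now any $\sigma\in\mathrm{Gal}(\mathbb{C}/k_0)$ lies in $\mathrm{Ker}(h)$ in the notation preceding Theorem \ref{ThmClassField}, hence $(A,\Theta,\iota)^\sigma\cong(A,\Theta,\iota)$ by an $\mathfrak{O}_K$-linear isomorphism, which restricts to an isomorphism $(A,\Theta,\iota_0)\cong(A,\Theta,\iota_0)^\sigma$; thus the field of moduli of $(A,\Theta,\iota_0)$ over $K^*$ is contained in $k_0$. Together with the reverse inclusion $K^*(\varphi(z_0))\supseteq K^*\mathbb{Q}(X(z_1^0,z_2^0),Y(z_1^0,z_2^0))=k_0$ coming from the covering $\mathcal{R}$, this gives $K^*(\varphi(z_0))=k_0$. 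Assembling (S-i)--(S-iii) and invoking the uniqueness of the canonical model completes the proof.

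I expect the delicate step to be this last one: promoting Theorem \ref{ThmCF} from the level of $\mathbb{P}(1:3:5)$ (the symmetric Hilbert modular surface) to the level of $V$ -- that is, controlling the at-most-quadratic ambiguity introduced by the double cover $\mathcal{R}\colon V\to\mathbb{P}(1:3:5)$, equivalently the choice of $\mathcal{O}_5$-real multiplication, and showing it collapses over $K^*$ at CM points. Pinning down the explicit normalising constants so that $\varphi$ both lands on $V$ and is defined over $\mathbb{Q}$ is routine in principle but requires the Fourier-expansion bookkeeping of \cite{NaganoTheta} and \cite{NaganoShimura}.
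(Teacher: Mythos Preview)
Your proposal is correct and follows essentially the same architecture as the paper's proof: verify (S-i)--(S-iii) for the explicit pair $(V,\varphi)$, with (S-i) and (S-ii) coming from Hirzebruch/M\"uller together with the theta computations of \cite{NaganoTheta}, and (S-iii) reduced to showing that the at-most-quadratic ambiguity introduced by the double cover $\mathcal{R}$ collapses at CM points. The only minor difference is the tool used for that collapse: the paper invokes \cite{Shimura97} Chapter~IV Proposition~1 to conclude directly that the field of moduli $M'$ of $(A,\Theta,\tilde{\iota})$ equals the field of moduli $M$ of $(A,\Theta)$ (hence $\mathbb{Q}(X,\eta)=\mathbb{Q}(X,Y)$ as fields, not merely over $K^*$), whereas you argue via $\mathrm{Ker}(h)=\mathrm{Gal}(\mathbb{C}/k_0)$ from the discussion preceding Theorem~\ref{ThmClassField} to obtain the equality only after adjoining $K^*$; both routes suffice, and your self-diagnosis of the delicate step is accurate.
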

 
 \begin{proof}
 
 From Proposition \ref{ThetaRepn}, 
we can see that  $(V,\varphi)$ of (\ref{ourphi})
satisfies {\em (S-i)} and {\em (S-ii)}.
Here, we note that the coefficient $2^{13} \cdot 5^5 \cdot 3^{-1}$ of $s_{15}$
was calculated in the proof of \cite{NaganoTheta} Theorem 4.1.
In this proof, we shall show that   $(V,\varphi)$ satisfies {\em (S-iii)}.

For the real quadratic field $\mathbb{Q}(\sqrt{\Delta})$ of discriminant $\Delta$,
let us consider a principally polarized abelian variety $(A,\Theta)$ with 
$\tilde{\iota}: \mathbb{Q}(\sqrt{\Delta})\hookrightarrow {\rm End}_0(A)$. 
We note that we have two choices of $\tilde{\iota}$ induced from the choices of the embeddings $\mathbb{Q}(\sqrt{\Delta})\hookrightarrow \mathbb{R}$
given by $\sqrt{\Delta}\mapsto \sqrt{\Delta}$ or $\sqrt{\Delta}\mapsto -\sqrt{\Delta}$.
For $\sigma \in {\rm Aut}(\mathbb{C})$,
set $\tilde {\iota}^\sigma (a)=(\tilde{\iota}(a))^\sigma$ $(a\in \mathbb{Q}(\sqrt{\Delta}))$.
For a triplet $(A,\Theta,\tilde{\iota})$,
we can consider $P^\sigma=(A^\sigma,\Theta^\sigma,\tilde{\iota}^\sigma)$
and  define the field of moduli $M'$ of $P=(A,\Theta,\tilde{\iota})$.
Namely, $P^\sigma$ is isomorphic to $P$ if and only if $\sigma \in {\rm Gal}(\mathbb{C}/M')$.
We note that the field of moduli $M'$ of $(A,\Theta,\tilde{\iota})$ is an algebraic extension of the field of moduli $M$ of $(A,\Theta)$
given in Definition \ref{Df1.2} (see \cite{S59} Proposition 8).

Suppose that $\Delta$ is equal to a sum of two squares of prime numbers.
Then, according to \cite{Geer} Chapter IX Section 2,
the moduli space of $(A,\Theta)$ is given by the Humbert surface,
which is isomorphic to $\langle PSL(2,\mathcal{O}_\Delta),\tau \rangle\backslash (\mathbb{H} \times \mathbb{H}) $,
and the forgetting mapping $(A,\Theta,\tilde{\iota})\mapsto (A,\Theta)$
corresponds to the action of the involution $\tau$ of $\mathbb{H} \times \mathbb{H}$ given by $(z_1,z_2)\mapsto (z_2,z_1)$.

From now on,
we consider the case $\Delta=1^2+2^2.$
By Proposition \ref{PropCF} (2),
the field of moduli $M $ of $(A,\Theta)$ coincides with $\mathbb{Q}(X,Y)=\mathbb{Q}(\frac{\mathfrak{B}}{\mathfrak{A}^3},\frac{\mathfrak{C}}{\mathfrak{A}^5})$ under our construction.
By virtue of Remark \ref{RemVV}  and the argument in the above paragraph,
the forgetting mapping $(A,\Theta,\tilde{\iota})\mapsto (A,\Theta)$ is described by  the double covering $\mathcal{R}:V\rightarrow \mathbb{P}(1:3:5)$ in 
Remark \ref{ThVV}.
On $(\mathfrak{A}:\mathfrak{B}:\mathfrak{C})\in \mathbb{P}(1:3:5)$,
there exist two points $(\mathfrak{A}:\pm \mathfrak{c}:\mathfrak{B}:\mathfrak{D})\in V$ corresponding to the choices of $\tilde{\iota}$.
So, the field of moduli $M'$ of $(A,\Theta,\tilde{\iota})$ is given by 
$\mathbb{Q}(X,\eta)$,
where 
$(X,\eta)=(\frac{\mathfrak{B}}{\mathfrak{A}^3},\frac{\mathfrak{D}}{\mathfrak{c}^3})$ is a system of affine coordinates of $V$.

Let $K$ be an imaginary quadratic extension of $K_0=\mathbb{Q}(\sqrt{5})$ and $(K,\{\varphi_j\})$ is a CM type.
We suppose that $(A,\Theta)$ is a principally polarized abelian surface of type $(K,\{\varphi_j\})$.
So, as in Section 1.1,
we have the isomorphism $\iota:K \simeq {\rm End}_0(A)$,
which induces $\tilde{\iota}=\iota|_{\mathbb{Q}(\sqrt{5})}: K\hookrightarrow {\rm End}_0(A)$.
In this case,
we are able to apply \cite{Shimura97} Chapter IV Proposition 1.
By this proposition, together with the definition of the field of moduli $M$ of $(A,\Theta)$,
we have 
$(A,\Theta,\iota)\simeq (A^\sigma.\Theta^\sigma,\iota^\sigma)$ for any $\sigma \in {\rm Gal}(\mathbb{C}/M)$.
Of course, this induces that $(A,\Theta,\tilde{\iota})\simeq (A^\sigma.\Theta^\sigma,\tilde{\iota}^\sigma)$.
Therefore, if $(A,\Theta,\tilde{\iota})$ is of CM type,
the field of moduli $M'$ of $(A,\Theta,\tilde{\iota})$ must coincide with the field of moduli $M$ of $(A,\Theta)$.
Hence, in our case,
 we have 
\begin{align}\label{Xeta}
\mathbb{Q}(X,Y) = \mathbb{Q} (X,\eta).
\end{align}

Here,
we suppose that there is a $K_0$-linear embedding $f: K\hookrightarrow M_2(\mathfrak{O}_B)$ with $f(\mathfrak{O}_K)\subset {\rm End}(A)$
and $(z_1^0,z_2^0)\in \mathbb{H} \times \mathbb{H}$ is the unique fixed point of $f(K)$.
By virtue of the argument \cite{S67} 2.7,
such an embedding $f $ determines a CM type $(K,\{\varphi_j\})$.
This CM type determines a pair $(A,\iota)$,
where $\iota:K \hookrightarrow {\rm End}_0(A)$.
Since   $(z_1^0,z_2^0)$ is the fixed point 
for $f(K)$, where $f$ satisfies  $f(\mathfrak{O}_K)\subset {\rm End}(A)$,
by the argument of \cite{Geer} Chapter IX Section 1,
we can see that
the abelian surface $A$ coming from the fixed point $(z_1^0,z_2^0)$
 has a principal polarization $\Theta$
and the pair $(A,\iota)$ is of CM type $(\mathfrak{O}_K,\{\varphi_1,\varphi_2\})$ in the sense of Definition \ref{DefPrin}.
So,
 by our construction via the Shioda-Inose structure of $\mathcal{F}=\{S(\mathfrak{A}:\mathfrak{B}:\mathfrak{C})\}$,
$\mathbb{Q}(X(z_1^0,z_2^0),Y(z_1^0,z_2^0))$ gives the field of moduli of the above $(A,\Theta)$.
Therefore,
by (\ref{Xeta}) and Theorem \ref{ThmCF},
we prove that 
$K^* (X(z_1^0,z_2^0),\eta(z_1^0,z_2^0))/K^*$ is the class field corresponding to the group $H_0$ of (\ref{H0}).
So, our $(V,\varphi)$ satisfies {\em (S-iii)}.
 \end{proof}

 \section{Cyclic quartic CM fields}

Generically,
our class field $K^*(X,Y)/K^*$ in Theorem \ref{ThmCF} is not the absolute class field of $K^*$,
because the group $I_{K^*} / H_0$  is not  equal to the ideal class group $I_{K^*}/P_{K^*}$.
Therefore, it is non-trivial to determine the structure of the  class field $K^*(X,Y)/K^*$.
Let us study the detailed structure of $K^*(X,Y)/K^*$.

In this section, we suppose that $K$ is normal over $\mathbb{Q}$.
Then, ${\rm Gal}(K/\mathbb{Q})$ is isomorphic to $(\mathbb{Z}/2\mathbb{Z})^2$ or $\mathbb{Z}/4\mathbb{Z}$
(see Example \ref{ExapS}).
However, if ${\rm Gal}(K/\mathbb{Q})\simeq (\mathbb{Z}/2\mathbb{Z})^2$,
then $K$ is not primitive in the sense of Definition \ref{DfPrimitive} and $K^*$ is an imaginary quadratic field.
So, our class field $K^*(X,Y)/K^*$ does not exceed the scope of Kronecker's Jugendtraum for imaginary quadratic fields.

Hence, in this section, we focus on the case of ${\rm Gal}(K/\mathbb{Q})\simeq \mathbb{Z}/4\mathbb{Z}$.
Then,   according to Example \ref{ExapS} (ii),
$K^*$ coincides with $K$.
Based on the results \cite{Shimura1}, \cite{HSW}, \cite{HW}, \cite{HHRWH} and \cite{HHRW} for cyclic extensions,
 we determine the structure of the Galois group $I_K/H_0$ of our class field $K(X,Y)/K$.

 \subsection{Integral basis of cyclic quartic  fields}

 Any   cyclic quartic  field over $\mathbb{Q}$ is given by
 \begin{eqnarray}\label{CQE}
 K=\mathbb{Q}\Big(\sqrt{A(\Delta + B \sqrt{\Delta})} \Big),
 \end{eqnarray}
 where
 $A,B,C,\Delta \in \mathbb{Z}$ and
 \begin{align}\label{CQEDeta}
 \begin{cases}
& A\equiv 1 \quad ({\rm mod}2),  \quad \text{squarefree}\\
& \Delta=B^2 +C^2, \quad (B>0, C>0), \quad \text{squarefree}\\
& {\rm GCD}(A,\Delta)=1.
 \end{cases}
 \end{align}
Let us consider the five cases (i),(ii),(iii),(iv) and (v) (see Table 3).

\begin{table}[h]
\center
\begin{tabular}{cc}
\toprule
Case  &   \\
 \midrule
 (i) &  $\Delta\equiv 0$ $({\rm mod} 2)$  \\ 
(ii) &  $\Delta\equiv B\equiv 1$ $({\rm mod} 2)$  \\
(iii) &     $\Delta\equiv 1$ $({\rm mod} 2)$ , $B\equiv 0$ $({\rm mod} 2)$, $A+B \equiv 3$ $({\rm mod} 4)$\\
(iv) &     $\Delta\equiv 1$ $({\rm mod} 2)$ , $B\equiv 0$ $({\rm mod} 2)$, $A+B \equiv 1$ $({\rm mod} 4)$, $A\equiv C$ $({\rm mod} 4) $\\
(v) &     $\Delta\equiv 1$ $({\rm mod} 2)$ , $B\equiv 0$ $({\rm mod} 2)$, $A+B \equiv 1$ $({\rm mod} 4)$, $A\equiv -C$ $({\rm mod} 4) $\\
 \bottomrule
\end{tabular}
\caption{The five cases (i),(ii),(iii),(iv) and (v)}
\end{table}
 
 \begin{prop} (\cite{HSW}, \cite{HW}, \cite{HHRWH} or \cite{HHRW})
 \label{HardyProp}
Let $K$ be a cyclic quartic field of (\ref{CQE}). 

(1)  The conductor of the field $K$ is given by
$
2^l \Delta |A|,
$
 where
 \begin{align*}
l=
 \begin{cases}
 & 3 \quad ( \text{ case  (i)  and  (ii) } ), \\
 &2 \quad ( \text{ case  (iii) }), \\
 & 0 \quad( \text{ case  (iv)  and  (v) } ).
 \end{cases}
 \end{align*}
 
 (2) The discriminant of the field $K$ is given by
$
2^e \Delta^3 A^2,
$
where
 \begin{align*}
 e
 =
 \begin{cases}
 & 8 \quad ( \text{case (i) }),  \\
 & 6 \quad ( \text{case (ii) })\\
 &  4 \quad ( \text{case (iii) }),\\
 & 0 \quad ( \text{case (iv) and (v) }).
 \end{cases}
 \end{align*}

(3)
Set
$
\alpha=\sqrt{A(\Delta + B \sqrt{\Delta})},  \beta=\sqrt{A(\Delta - B \sqrt{\Delta})}.
$
A system of basis of the ring of integers $\mathfrak{O}_K$ of $K$ is given by  Table 4.
\begin{table}[h]
\center
\begin{tabular}{cc}
\toprule
Case  &  Basis \\
 \midrule
 (i) \vspace{2mm}&  $1, \sqrt{\Delta},\alpha,\beta $  \\ 
(ii) \vspace{2mm} &  $\displaystyle  1, \frac{1+\sqrt{\Delta}}{2},\alpha,\beta $  \\
(iii)\vspace{2mm} &     $\displaystyle  1,  \frac{1+\sqrt{\Delta}}{2},\frac{\alpha+\beta}{2},\frac{\alpha-\beta}{2} $\\
(iv) \vspace{2mm} &     $\displaystyle  1,  \frac{1+\sqrt{\Delta}}{2},\frac{1+\sqrt{\Delta}+\alpha+\beta}{4},\frac{1-\sqrt{\Delta}+\alpha-\beta}{4} $\\
(v) &     $\displaystyle  1,  \frac{1+\sqrt{\Delta}}{2},\frac{1+\sqrt{\Delta}+\alpha-\beta}{4},\frac{1-\sqrt{\Delta}+\alpha+\beta}{4}$\\
 \bottomrule
\end{tabular}
\caption{The basis of the ring of integers for the cases (i),(ii),(iii),(iv) and (v)}
\end{table}

  \end{prop}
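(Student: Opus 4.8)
The plan is to reconstruct this (classical) statement from the structure of $K$ as a ramified quadratic extension of the real quadratic field $K_0=\mathbb{Q}(\sqrt{\Delta})$, together with the conductor--discriminant formula. First I would record the identities forced by $\Delta-B^2=C^2$: with $\alpha,\beta$ as in the statement one has $\alpha^2=A(\Delta+B\sqrt{\Delta})$, $\beta^2=A(\Delta-B\sqrt{\Delta})$, $\alpha^2+\beta^2=2A\Delta$ and $(\alpha\beta)^2=A^2\Delta C^2$, so $\alpha\beta=\pm|A|C\sqrt{\Delta}\in K_0$ and $\beta=\pm|A|C\sqrt{\Delta}\,\alpha/\alpha^2\in K$. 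Hence $K=K_0(\alpha)$ is normal over $\mathbb{Q}$ with $\operatorname{Gal}(K/\mathbb{Q})\cong\mathbb{Z}/4\mathbb{Z}$, generated by $\sigma:\sqrt{\Delta}\mapsto-\sqrt{\Delta},\ \alpha\mapsto\beta\mapsto-\alpha$, and $K_0$ is the fixed field of $\sigma^2$. Moreover $\{1,\sqrt{\Delta},\alpha,\beta\}$ is a $\mathbb{Q}$-basis of $K$ (modulo $1,\sqrt{\Delta},\alpha$ the element $\beta$ is a nonzero rational multiple of $\sqrt{\Delta}\,\alpha$), the trace form is diagonal on this basis with entries $4,4\Delta,4A\Delta,4A\Delta$, and therefore $\mathrm{disc}\,\mathbb{Z}[1,\sqrt{\Delta},\alpha,\beta]=2^{8}A^{2}\Delta^{3}$.

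For (1) and (2) I would pass to Dirichlet characters. The four characters cut out by $K$ are the trivial one, the quadratic character $\chi_\Delta$ of $K_0$ (conductor $\Delta$ if $\Delta$ is odd, $4\Delta$ if $\Delta$ is even; here $\Delta$ squarefree and a sum of two squares forces $\Delta\equiv 1$ or $2\pmod 4$), and a conjugate pair $\chi,\bar\chi$ of order $4$ with $\chi^{2}=\chi_\Delta$. By conductor--discriminant, $\mathfrak{f}(K)=\mathfrak{f}(\chi)$ and $|d_K|=\mathfrak{f}(\chi_\Delta)\,\mathfrak{f}(\chi)^{2}$, so it suffices to find $\mathfrak{f}(\chi)$ place by place. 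At an odd $p\mid A$: using $\gcd(A,\Delta)=1$ and the factorisation $A(\Delta+B\sqrt{\Delta})=A\sqrt{\Delta}(\sqrt{\Delta}+B)$, one checks that $v_{\mathfrak{p}}(\sqrt{\Delta}+B)$ is even for every $\mathfrak{p}\mid p$ (because $(\sqrt{\Delta}+B)(\sqrt{\Delta}-B)=C^2$ and $\gcd(B,C)=1$), hence $v_{\mathfrak{p}}(A(\Delta+B\sqrt{\Delta}))$ is odd while $p$ is unramified in $K_0$; so the inertia group of $p$ in $\operatorname{Gal}(K/\mathbb{Q})$ is $\langle\sigma^2\rangle$, the ramification is tame, and $p\,\|\,\mathfrak{f}(\chi)$. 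At an odd $p\mid\Delta$: here $p\equiv 1\pmod 4$, $p$ ramifies in $K_0$, and an inertia group of order $2$ would force $p$ unramified in $K_0$; so $p$ is totally and tamely ramified in $K$, again $p\,\|\,\mathfrak{f}(\chi)$. Thus the prime-to-$2$ part of $\mathfrak{f}(\chi)$ is $|A|$ times the odd part of $\Delta$, and the whole content of the five cases is the $2$-adic exponent $v_2(\mathfrak{f}(\chi))$.

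I would compute $v_2(\mathfrak{f}(\chi))$ by base change to $\mathbb{Q}_2$: with $\gamma=A(\Delta+B\sqrt{\Delta})$ one analyses the quadratic extension $(K_0\otimes\mathbb{Q}_2)(\sqrt{\gamma})$, determines the square class of $\gamma$ (equivalently of a suitable unit multiple of it) in $(K_0\otimes\mathbb{Q}_2)^{\times}/(K_0\otimes\mathbb{Q}_2)^{\times 2}$, reads off the $2$-part of the relative discriminant $\mathfrak{d}_{K/K_0}$, and combines it with $v_2(d_{K_0})$ via $d_K=d_{K_0}^{2}\,N_{K_0/\mathbb{Q}}(\mathfrak{d}_{K/K_0})$. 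The congruences in (\ref{CQEDeta}) and the subdivision into cases (i)--(v) are precisely what controls this local behaviour at $2$: roughly speaking, whether $\gamma$ can be adjusted, up to squares and units, to lie in $1+4\,\mathfrak{O}_{K_0}$, to lie in $1+8\,\mathfrak{O}_{K_0}$, or only to be a non-square unit with unramified, resp.\ ramified, square root; these produce $v_2(\mathfrak{f}(\chi))$ equal to $4$ or $3$ (cases (i),(ii)), to $2$ (case (iii)), and to $0$ (cases (iv),(v)) respectively, which after factoring off the possible factor of $2$ in $\Delta$ gives the conductor $2^{l}\Delta|A|$ with $l$ as in part (1), and then $|d_K|=2^{e}\Delta^{3}A^{2}$ with $e$ as in part (2) by the conductor--discriminant formula. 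This dyadic case analysis is the main obstacle; the odd places and the global bookkeeping are routine once it is done.

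Finally, for (3), I would verify that each element in Table~4 is an algebraic integer and that the spanned lattice has the discriminant just computed. Integrality is short in each case from the identities above: $\tfrac{1+\sqrt{\Delta}}{2}$ is an integer exactly when $\Delta\equiv 1\pmod 4$, i.e.\ $\Delta$ odd; $\big(\tfrac{\alpha\pm\beta}{2}\big)^{2}=\tfrac{A\Delta\mp|A|C\sqrt{\Delta}}{2}\in\mathfrak{O}_{K_0}$ when $\Delta$ is odd with $B$ even (so $C$ odd), which makes $\tfrac{\alpha\pm\beta}{2}$ integral; and each quarter-combination in cases (iv),(v) satisfies a monic quartic over $\mathbb{Z}$ precisely under $A+B\equiv 1\pmod 4$ together with $A\equiv C$, resp.\ $A\equiv-C\pmod 4$. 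The change-of-basis determinant from $\{1,\sqrt{\Delta},\alpha,\beta\}$ to the tuple in the successive rows of Table~4 is $1,\ \tfrac12,\ \tfrac14,\ \tfrac1{16},\ \tfrac1{16}$, so the spanned lattice has discriminant $2^{8}A^{2}\Delta^{3},\ 2^{6}A^{2}\Delta^{3},\ 2^{4}A^{2}\Delta^{3},\ A^{2}\Delta^{3},\ A^{2}\Delta^{3}$, agreeing with $2^{e}\Delta^{3}A^{2}$ from part (2). Since in each case the proposed lattice lies in $\mathfrak{O}_K$ and has discriminant equal to $d_K$, it has index $1$ and is the full ring of integers. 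The one delicate point is confirming that the parity hypotheses really make the quarter-combinations in cases (iv),(v) integral, which is exactly where the distinction $A\equiv C$ versus $A\equiv-C\pmod 4$ enters.
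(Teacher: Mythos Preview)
The paper gives no proof of this proposition at all: it is simply quoted from the cited references \cite{HSW}, \cite{HW}, \cite{HHRWH}, \cite{HHRW}, and the text moves on immediately to Section~3.2. So there is nothing in the paper to compare your argument against; you have supplied what the paper only imports.

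On the substance of your reconstruction: the global architecture is the standard and correct one. The trace form on $\{1,\sqrt{\Delta},\alpha,\beta\}$ is indeed diagonal with the entries you list, giving discriminant $2^{8}A^{2}\Delta^{3}$; the conductor--discriminant identification $\mathfrak{f}(K)=\mathfrak{f}(\chi)$ and $|d_K|=\mathfrak{f}(\chi_\Delta)\,\mathfrak{f}(\chi)^{2}$ is right (and $\mathfrak{f}(\chi_\Delta)\mid \mathfrak{f}(\chi)$ since $\chi^{2}=\chi_\Delta$); your treatment of odd primes dividing $A$ and $\Delta$ is clean, and the numerology you announce for $v_2(\mathfrak{f}(\chi))$ checks against the stated $l$ and $e$ in every case once one accounts for $v_2(\Delta)$ and $v_2(d_{K_0})$. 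The change-of-basis determinants $1,\tfrac12,\tfrac14,\tfrac{1}{16},\tfrac{1}{16}$ are correct, so the index argument in part~(3) closes as soon as integrality is known. The one place where your write-up is genuinely only a sketch is the dyadic step: you assert the outcome of the square-class analysis of $\gamma=A(\Delta+B\sqrt{\Delta})$ in $K_0\otimes\mathbb{Q}_2$ and the integrality of the quarter-combinations in cases~(iv),(v) without carrying them through. That is exactly the computation the cited papers perform in detail, so if this were to stand alone you would need to fill in (a)~the explicit congruences on $\gamma$ modulo $4\mathfrak{O}_{K_0,(2)}$ and $8\mathfrak{O}_{K_0,(2)}$ that separate (ii), (iii), (iv)/(v), and (b)~the monic minimal polynomials over $\mathbb{Z}$ of $\tfrac{1\pm\sqrt{\Delta}+\alpha\pm\beta}{4}$ under the stated parity hypotheses. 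With those two local verifications supplied, your argument is complete.
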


\subsection{Complex tori from  cyclic quartic CM fields}

 A cyclic quartic field $K$ of (\ref{CQE}) gives a primitive CM type (see Example \ref{ExapS}), if $A<0$.
We call such fields  cyclic quartic CM fields.

In Section 3.1,
we obtained a system of basis $\alpha_1,\cdots,\alpha_4$ of the ring $\mathfrak{O}_K$ of integers of  $K$.
As Section 1.1,
we obtain the lattice $\Lambda(\mathfrak{O}_K)$ of $\mathbb{C}^2$ from $\alpha_1,\cdots,\alpha_4$.
Then, we have a complex torus $\mathbb{C}^2/\Lambda(\mathfrak{O}_K)$.
If the Riemann form $E$ of (\ref{RiemannShimura}) on $\Lambda(\mathfrak{O}_K)\times \Lambda(\mathfrak{O}_K)$ is $\mathbb{Z}$-valued,
the complex torus $\mathbb{C}^2/\Lambda(\mathfrak{O}_K)$ becomes a abelian surface of CM  type $(\mathfrak{O}_K,\{id,\varphi\})$
(see Definition \ref{DefPrin} and Proposition \ref{PropPrincipal}).

Recall that the  Riemann form of (\ref{RiemannShimura})  depends on  $\zeta \in K$
 such that
 $K=K_0(\zeta)$, $-\zeta^2 \in K_0$ is totally positive and ${\rm Im}(\zeta^{\varphi_j})>0$.
 In our study for the field of (\ref{CQE}),
 we canonically take
 \begin{align}\label{ourzeta}
 \zeta=\frac{\sqrt{A(\Delta + B\sqrt{\Delta})}}{\kappa}, \quad \quad (\kappa \in \mathbb{Q}).
 \end{align}

 \begin{prop}
 For a cyclic field $K$ of (\ref{CQE}) and a number $\zeta $ of (\ref{ourzeta}),
 the  matrix $(E(\alpha_j,\alpha_k))_{j,k=1,\cdots,4}$ coming from the alternating Riemann form $E$ of (\ref{RiemannShimura}) is given as Table 5.
 \begin{table}[h]
\center
\begin{tabular}{cc}
\toprule
Case  &   Matrix $(E(\alpha_j,\alpha_k))_{j,k=1,\cdots,4}$ \\
 \midrule
 (i) \vspace{2mm}&  $\displaystyle   - \frac{4\Delta A}{\kappa} \begin{pmatrix} 0 & 0& 1 & 0 \\ 0 & 0 & B & C \\ -1 & -B & 0 & 0 \\ 0 & -C &0 &0 \end{pmatrix}(=:M_1)$  \\ 
(ii) \vspace{2mm} &  $\displaystyle    - \frac{2\Delta A}{\kappa} \begin{pmatrix} 0 & 0& 2 & 0 \\ 0 & 0 & (1+B)  & C \\ -2 & -(1+B) & 0 & 0 \\ 0 & -C &0 &0 \end{pmatrix} (=:M_2)$  \\
(iii)\vspace{2mm} &     $\displaystyle   - \frac{\Delta A}{\kappa} \begin{pmatrix} 0 & 0& 2 & 2 \\ 0 & 0 & (1+B+C)  & (1+B-C) \\ -2 & -(1+B+C) & 0 & 0 \\ -2 & -(1+B-C) &0 &0 \end{pmatrix}(=:M_3)$ \\
(iv) \vspace{2mm} &     $\displaystyle   - \frac{\Delta A}{\kappa} \begin{pmatrix} 0 & 0& 1 & 1 \\ 0 & 0 & (1+B+C)/2  & (1+B-C)/2 \\ -1 & -(1+B+C)/2 & 0 & B/2 \\ -1 & -(1+B-C)/2 &-B/2 &0 \end{pmatrix} (=:M_4)$\\
(v) &    $\displaystyle   - \frac{\Delta A}{\kappa} \begin{pmatrix} 0 & 0& 1 & 1 \\ 0 & 0 & (1+B-C)/2  & (1+B+C)/2 \\ -1 & -(1+B-C)/2 & 0 & B/2 \\ -1 & -(1+B+C)/2 &-B/2 &0 \end{pmatrix}(=:M_5) $\\
 \bottomrule
\end{tabular}
\caption{The  matrices for the Riemann form $E$ of (\ref{RiemannShimura}) on $\Lambda(\mathfrak{O}_K) \times \Lambda(\mathfrak{O}_K)$}
\end{table}

 \end{prop}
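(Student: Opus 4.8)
The plan is to compute the matrix $(E(\alpha_j,\alpha_k))$ directly from the formula
$E(u(\alpha),u(\beta))=\mathrm{Tr}_{K/\mathbb{Q}}(\zeta\,\alpha\,\beta^\rho)$
established in Section 1.1, specialized to $\zeta=\sqrt{A(\Delta+B\sqrt{\Delta})}/\kappa$ from (\ref{ourzeta}) and to the integral bases of $\mathfrak{O}_K$ listed in Table 4. So the first step is to fix notation for the complex conjugation $\rho$ on $K$: since $K=\mathbb{Q}(\alpha)$ with $\alpha^2=A(\Delta+B\sqrt{\Delta})$ and $\beta^2=A(\Delta-B\sqrt{\Delta})$, the Galois group $\mathbb{Z}/4\mathbb{Z}$ acts by $\alpha\mapsto\beta\mapsto-\alpha\mapsto-\beta$ (fixing the generator so that $\sqrt{\Delta}\mapsto-\sqrt{\Delta}$), and $\rho$ is the order-two element $\alpha\mapsto-\alpha$, $\beta\mapsto-\beta$, $\sqrt{\Delta}\mapsto\sqrt{\Delta}$. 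The key arithmetic input is then the trace form: I would record once and for all the values of $\mathrm{Tr}_{K/\mathbb{Q}}$ on the $\mathbb{Q}$-basis $1,\sqrt{\Delta},\alpha,\beta$ and on products of these, noting that $\mathrm{Tr}_{K/\mathbb{Q}}(x)=0$ unless $x$ lies in $\mathbb{Q}\cdot 1$ after using $\alpha\beta=A\sqrt{\Delta}\cdot(\text{something})$; more precisely $\alpha\beta=A\sqrt{\Delta^2-B^2\Delta}=AC\sqrt{\Delta}$ up to sign conventions fixed by the chosen square roots, so products of basis elements reduce to $\mathbb{Q}+\mathbb{Q}\sqrt{\Delta}+\mathbb{Q}\alpha+\mathbb{Q}\beta$ and only the rational part survives the trace.

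Second, I would assemble the $4\times 4$ matrix entrywise. For case (i), with basis $1,\sqrt{\Delta},\alpha,\beta$, one has $E(\alpha_j,\alpha_k)=\mathrm{Tr}_{K/\mathbb{Q}}(\zeta\,\alpha_j\,\alpha_k^\rho)$; since $\rho$ fixes $1$ and $\sqrt{\Delta}$ and negates $\alpha,\beta$, the relevant products are $\zeta$, $\zeta\sqrt{\Delta}$, $\zeta\alpha$, $\zeta\beta$ and their pairwise products, and the only nonzero traces are those landing in $\mathbb{Q}$ after multiplying by $\zeta=\alpha/\kappa$ — this is where the factor $-4\Delta A/\kappa$ and the antisymmetric block shape $\begin{pmatrix}0&J\\-J^{t}&0\end{pmatrix}$ with $J=\begin{pmatrix}1&0\\mathfrak{B}&C\end{pmatrix}$ emerge. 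The antisymmetry is automatic from $E(z,w)=-E(w,z)$, so I only need to compute the upper-right $2\times 2$ block in each case; the matrix $M_i$ is then forced. Cases (ii)–(v) are the same computation with the basis elements $\tfrac{1+\sqrt{\Delta}}{2}$, $\tfrac{\alpha\pm\beta}{2}$, $\tfrac{1\pm\sqrt{\Delta}+\alpha\pm\beta}{4}$, etc.; the halving and quartering of denominators produces exactly the stated scalars $-2\Delta A/\kappa$, $-\Delta A/\kappa$ and the entries like $(1+B\pm C)/2$ and $B/2$. The congruence conditions (\ref{CQEDeta}) defining cases (ii)–(v) guarantee these half-integer entries are consistent with $\alpha_j\in\mathfrak{O}_K$, but they do not enter the computation of $E$ itself beyond bookkeeping.

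The main obstacle I anticipate is purely one of sign and normalization consistency: the definition (\ref{RiemannShimura}) of $E$ depends on a definite choice of square roots $\alpha^{\varphi_1},\alpha^{\varphi_2}$ (equivalently of $\zeta^{\varphi_j}$ with $\mathrm{Im}(\zeta^{\varphi_j})>0$), and getting all the off-diagonal signs in $M_4,M_5$ — in particular the $\pm B/2$ entry in the lower-right block, which distinguishes these from the block-triangular shape of $M_1,M_2,M_3$ — requires tracking which of $\alpha$ or $\beta$ is identified with the embedding $\varphi_1=\mathrm{id}$ and which with $\varphi_2=\varphi$. I would pin this down by demanding $-\zeta^2=-A(\Delta+B\sqrt{\Delta})/\kappa^2$ be totally positive (which, since $A<0$ in the CM case, forces a sign condition relating $\kappa$ and the real embeddings of $\Delta+B\sqrt{\Delta}$) and by checking the resulting $E(z,\sqrt{-1}w)$ is positive definite, exactly as in the general construction recalled before (\ref{RiemannShimura}). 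Once the embedding labelling is fixed, each entry of each $M_i$ is a one-line trace computation, and the proposition follows by inspection of the five cases; I would present it as a table-by-table verification rather than a single uniform formula.
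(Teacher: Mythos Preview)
Your approach is correct and is precisely what the paper does: its entire proof reads ``Using Proposition \ref{HardyProp} (3), we can prove this by a straightforward calculation,'' and your proposal is a faithful expansion of what that calculation consists of --- the trace formula $E(u(\alpha),u(\beta))=\mathrm{Tr}_{K/\mathbb{Q}}(\zeta\alpha\beta^\rho)$ applied entrywise to the integral bases in Table~4. The sign and normalization bookkeeping you flag as the main obstacle is indeed the only thing requiring care, but there is no missing idea.
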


 \begin{proof}
 Using Proposition \ref{HardyProp} (3),
 we can prove this by a straightforward calculation.
 \end{proof}

 \subsection{Detailed properties of ideal class groups and Galois groups}

 The Galois group  for our class field is given by $I_K/H_0$ (see (\ref{H0})).
 However,  the structure of $H_0$  is complicated.
 To study $H_0$, we need some detailed properties of CM fields.

 Set 
 \begin{align}\label{K/K0}
  I(K/K_0)=\{\mathfrak{a}\in I_K | \mathfrak{a} (\mathfrak{a}^{\rho})^{-1} \in P_{K} \},\quad I_0(K/K_0)=P_K \{\mathfrak{a}\in I_K|\mathfrak{a} ^\rho = \mathfrak{a} \}.
 \end{align}

 Let $P_{K_0}^+$ be the subgroup of $P_{K_0}$ consisting of all principal ideals generated by totally positive elements.
 Let 
$E_{K_0}$
be the group  of all units in $\mathfrak{O}_{K_0}$.
Let $E_{K_0}^+$ be the group of  all totally positive units  in $\mathfrak{O}_{K_0}$. 
Set $E_{K_0}^2=\{u^2 | u \in E_{K_0}\}$.
Then, there exists  $\varepsilon \in \mathbb{Z}_{\geq 0}$ such that
\begin{align}\label{EUnit}
2^{\varepsilon} =[E_{K_0}^+:E_{K_0}^2] =[P_{K_0} :P_{K_0}^+].
\end{align}

We have the mapping 
\begin{align}\label{NormMap}
N_{K/K_0}:I_K \rightarrow I_{K_0}.
\end{align}
There exists $\delta \in \mathbb{Z}_{\geq 0}$ such that
\begin{align}\label{delta}
2^\delta =[I_{K_0}: P_{K_0}^+ N_{K/K_0}(I_K)].
\end{align}

 Set
 $
 R_K=\{\mathfrak{a}\in I_K | N_{K/K_0} (\mathfrak{a}) \in P_{K_0}\}
 $ 
 and 
$R_K^+ =\{\mathfrak{a}\in I_K | N_{K/K_0} (\mathfrak{a}) \in P_{K_0}^+\}.$
The mapping $N_{K/K_0}$ of (\ref{NormMap}) induces an embedding
\begin{align}\label{IREmb}
I_K/R_K \hookrightarrow I_{K_0}/P_{K_0}.
\end{align}
Then, we have
\begin{align}\label{RR+}
[R_K : R_K^+]=2^{\varepsilon -\delta}.
\end{align}

 \begin{lem} \label{ShimuLem}(\cite{Shimura1} Appendix)
(1) There exist $\eta,\beta,\gamma \in\mathbb{Z}_{\geq 0}$ such that
\begin{align}\label{etabetagamma}
 2^\eta = [I_{K_0} \cap P_K : P_{K_0}],\quad
2^\beta = [I(K/K_0): I_0(K/K_0)] ,\quad
2^\gamma = [I_0(K/K_0) : I_{K_0} P_K].
\end{align}

(2) The relations
\begin{align}\label{beta.E}
 [I_K:I_{K_0}P_K] = 2^\eta \frac{h_{K}}{h_{K_0}}, \quad
 2^{\eta-\beta-\gamma} \frac{h_K}{h_{K_0}} = [I_K: I(K/K_0)],
 \quad
 \beta \leq \varepsilon, \quad
 2^\gamma = 2^{\eta + t-1} [N_{K/K_0} (E_K) : E_{K_0}^2 ].
\end{align}
hold.
Here,
$E_K$ be the group consists of all units in $\mathfrak{O}_K $ and 
$t$ is the number of prime ideals of $K_0$ ramified in $K$.

(3)
If  $2^{-\eta} h_{K_0}$ is an odd number, 
then 
\begin{align}\label{2Tor}
I(K/K_0)=\{\mathfrak{a}\in I_K|  \mathfrak{a}^2 \in I_{K_0}P_K \}.
\end{align}
Moreover, $2^{\beta+\gamma}$ is the number of elements of order $1$ or $2$ in the ideal class group $I_K/P_K $.
 \end{lem}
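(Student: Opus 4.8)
The plan is to run the standard genus theory of the quadratic extension $K/K_0$, with ${\rm Gal}(K/K_0)=\langle\rho\rangle$, along the chain of groups $P_K\subset I_{K_0}P_K\subset I_0(K/K_0)\subset I(K/K_0)\subset I_K$ (all containing $P_K$; recall $I(K/K_0),I_0(K/K_0)$ from (\ref{K/K0})). For part (1) the mechanism is the same at each step: squaring collapses one layer into the one below. If $\mathfrak{a}\in I_{K_0}$ becomes principal in $K$, say $\mathfrak{a}\mathfrak{O}_K=(\alpha)$, then $\alpha\mathfrak{O}_K$ is $\rho$-stable, so $\mathfrak{a}^2\mathfrak{O}_K=(\alpha\alpha^\rho)$ descends to $K_0$ and $\mathfrak{a}^2\in P_{K_0}$; if $\mathfrak{a}\in I(K/K_0)$ then $\mathfrak{a}^2=(\mathfrak{a}\mathfrak{a}^\rho)\cdot(\mathfrak{a}(\mathfrak{a}^\rho)^{-1})$ with $\mathfrak{a}\mathfrak{a}^\rho\mathfrak{O}_K=N_{K/K_0}(\mathfrak{a})\mathfrak{O}_K$ fixed by $\rho$ (a prime-by-prime check I will reuse later) and $\mathfrak{a}(\mathfrak{a}^\rho)^{-1}\in P_K$, so $\mathfrak{a}^2\in I_0(K/K_0)$; and $I_K^\rho$ modulo $I_{K_0}\mathfrak{O}_K$ is generated by the images of the $t$ primes $\mathfrak{P}_1,\cdots,\mathfrak{P}_t$ ramified in $K/K_0$, each with $\mathfrak{P}_i^2=\mathfrak{p}_i\mathfrak{O}_K\in I_{K_0}\mathfrak{O}_K$, so $I_0(K/K_0)/I_{K_0}P_K$ is killed by $2$. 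Each quotient is thus a finite elementary abelian $2$-group, which defines $\eta,\beta,\gamma$ as in (\ref{etabetagamma}).

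For part (2), the first two relations of (\ref{beta.E}) follow by index multiplicativity along the chain, using $|I_{K_0}P_K/P_K|=h_{K_0}/2^\eta$. For $\beta\le\varepsilon$ I would define a map $I(K/K_0)/I_0(K/K_0)\to E_{K_0}^+/N_{K/K_0}(E_K)$ sending the class of $\mathfrak{a}$, with $\mathfrak{a}(\mathfrak{a}^\rho)^{-1}=(\gamma)$, to the class of $N_{K/K_0}(\gamma)=\gamma\gamma^\rho$ (a totally positive unit of $K_0$): it is well defined, trivial on $I_0(K/K_0)$, and injective because $N_{K/K_0}(\gamma)\in N_{K/K_0}(E_K)$ forces $\gamma=u\, w^{1-\rho}$ with $u\in E_K$, $w\in K^\times$ by Hilbert 90 for $K/K_0$, whence $\mathfrak{a}(w)^{-1}$ is $\rho$-fixed and $\mathfrak{a}\in I_0(K/K_0)$; then $E_{K_0}^2\subseteq N_{K/K_0}(E_K)\subseteq E_{K_0}^+$ together with $[E_{K_0}^+:E_{K_0}^2]=2^\varepsilon$ from (\ref{EUnit}) gives $\beta\le\varepsilon$. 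The remaining identity $2^\gamma=2^{\eta+t-1}[N_{K/K_0}(E_K):E_{K_0}^2]$ is the step I expect to be the main obstacle: it amounts to computing $[I_0(K/K_0):I_{K_0}P_K]=[I_K^\rho:I_K^\rho\cap I_{K_0}P_K]$, i.e. determining which products $\prod_{i\in S}\mathfrak{P}_i$ of ramified primes lie in $I_{K_0}P_K$. Taking norms converts this into the condition that $\prod_{i\in S}\mathfrak{p}_i$ be a product of a square ideal of $K_0$ and a principal ideal generated by a norm from $K^\times$; by the Hasse norm theorem and a local-global count the obstruction is measured by $[N_{K/K_0}(E_K):E_{K_0}^2]$, while the single relation among $\mathfrak{P}_1,\cdots,\mathfrak{P}_t$ coming from a generator of $K/K_0$ and the capitulation index contribute the factors $2^{t-1}$ and $2^\eta$. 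Here I would follow Shimura's appendix and the standard genus-theory references closely.

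For part (3), assume $2^{-\eta}h_{K_0}$ is odd. Since $h_{K_0}/2^\eta=[I_{K_0}:I_{K_0}\cap P_K]$ is the order of the image $B$ of $I_{K_0}/P_{K_0}$ in $I_K/P_K$, the hypothesis says exactly that the $2$-part of $I_{K_0}/P_{K_0}$ capitulates (becomes principal) in $K$. The inclusion $I(K/K_0)\subseteq\{\mathfrak{a}:\mathfrak{a}^2\in I_{K_0}P_K\}$ is immediate from $\mathfrak{a}^2=N_{K/K_0}(\mathfrak{a})\mathfrak{O}_K\cdot(\mathfrak{a}(\mathfrak{a}^\rho)^{-1})$. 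Conversely, if $\mathfrak{a}^2=\mathfrak{b}(\gamma)$ with $\mathfrak{b}\in I_{K_0}$, then applying $N_{K/K_0}$ (and using $N_{K/K_0}(\mathfrak{b}\mathfrak{O}_K)=\mathfrak{b}^2$) gives $N_{K/K_0}(\mathfrak{a})^2=\mathfrak{b}^2(N_{K/K_0}\gamma)$, so $N_{K/K_0}(\mathfrak{a})\mathfrak{b}^{-1}$ has class of order dividing $2$ in $I_{K_0}/P_{K_0}$; by the capitulation hypothesis it becomes principal in $K$, say $N_{K/K_0}(\mathfrak{a})\mathfrak{b}^{-1}\mathfrak{O}_K=(\lambda)$, so $\mathfrak{a}\mathfrak{a}^\rho=N_{K/K_0}(\mathfrak{a})\mathfrak{O}_K=\mathfrak{b}(\lambda)$, and dividing into $\mathfrak{a}^2=\mathfrak{b}(\gamma)$ yields $\mathfrak{a}(\mathfrak{a}^\rho)^{-1}=(\gamma/\lambda)\in P_K$, i.e. $\mathfrak{a}\in I(K/K_0)$. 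This gives (\ref{2Tor}). For the final assertion, $I(K/K_0)/P_K=(I_K/P_K)^{\langle\rho\rangle}$, which by (\ref{2Tor}) equals $\{c\in I_K/P_K:c^2\in B\}$; as $B$ has odd order, every element of $B$ is a square in $B$, so this set is $(I_K/P_K)[2]\cdot B$ with $(I_K/P_K)[2]\cap B=\{1\}$, hence $[(I_K/P_K)^{\langle\rho\rangle}:B]$ is precisely the number of elements of order $1$ or $2$ in $I_K/P_K$. Since this index also equals $[I(K/K_0):I_{K_0}P_K]=2^\beta 2^\gamma$, this is the remaining claim of (3).
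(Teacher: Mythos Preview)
The paper does not prove this lemma at all: immediately after stating Lemmas \ref{ShimuLem}, \ref{ShimuUme} and \ref{SimuQ} it simply writes ``For proofs of the above arguments, see \cite{Shimura1}.'' So there is nothing to compare against in the paper itself; your proposal is effectively a reconstruction of Shimura's appendix rather than of anything in this paper.

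That said, your reconstruction is the standard genus-theory argument for the quadratic extension $K/K_0$ and is essentially correct. The treatment of part~(1) (squaring drops each layer to the one below), the index bookkeeping and the injection $I(K/K_0)/I_0(K/K_0)\hookrightarrow E_{K_0}^+/N_{K/K_0}(E_K)$ via $[\mathfrak{a}]\mapsto N_{K/K_0}(\gamma)$ for $\beta\le\varepsilon$, and your argument for part~(3) using the capitulation of the $2$-Sylow of $I_{K_0}/P_{K_0}$ to get both inclusions in (\ref{2Tor}) and then the decomposition $(I_K/P_K)^{\langle\rho\rangle}=(I_K/P_K)[2]\cdot B$ with $B$ odd, are all sound and match what one finds in Shimura's appendix.

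The one place where your write-up is genuinely incomplete is the last identity $2^\gamma=2^{\eta+t-1}[N_{K/K_0}(E_K):E_{K_0}^2]$ in part~(2): you correctly identify this as the ambiguous class number / genus formula for $K/K_0$, but the sketch (``Hasse norm theorem and a local-global count'') is not yet a proof. Since the paper itself only quotes this from \cite{Shimura1}, and since this particular formula is not actually used anywhere in Sections~3.3--3.5 (Lemma \ref{OurShimu} and Theorem \ref{ThmHantei} only need $\eta,\varepsilon,\beta$ and the consequences of part~(3)), this gap is harmless for the purposes of the present paper.
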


 Let $(K,\{\varphi_j\})$ be a CM type and  $(K^*,\{\psi_k\})$ be  its reflex.
 In fact,  the mapping $\Phi^*$ of (\ref{Phi*}) gives a mapping $I_{K^*} \rightarrow R_{K}^+$. 
 Set
 \begin{align}\label{I(Phi*)}
 I(\Phi^*)= \Big\{\mathfrak{a} \in I_{K^*}  | \mathfrak{a}^{\Phi^*} \in P_K \Big\}.
 \end{align}

 \begin{lem}(\cite{Shimura1} Appendix) \label{ShimuUme}
 There exists an embedding
 $
 I(\Phi^*) / H_0 \hookrightarrow E_{K_0}^+ / N_{K/K_0} (E_K).
 $
 \end{lem}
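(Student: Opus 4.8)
Following \cite{Shimura1}, the plan is to exhibit an explicit homomorphism $I(\Phi^*)\to E_{K_0}^+/N_{K/K_0}(E_K)$ whose kernel is exactly $H_0$. First recall that extension of ideals from $K$ to $L$ is injective, so the map $\mathfrak{a}\mapsto g(\mathfrak{a})$ is a well-defined homomorphism $I_{K^*}\to I_K$. Now take $\mathfrak{a}\in I(\Phi^*)$; by definition $g(\mathfrak{a})\in P_K$, so choose $\mu\in K^\times$ with $g(\mathfrak{a})=(\mu)$. The key structural input is the reflex-norm identity $g(\mathfrak{a})\,g(\mathfrak{a})^\rho = N(\mathfrak{a})\,\mathfrak{O}_K$ from the theory of complex multiplication (see \cite{Shimura97}); applied to $g(\mathfrak{a})=(\mu)$ it gives $(\mu\mu^\rho)=(N(\mathfrak{a}))$ as ideals of $\mathfrak{O}_K$, hence $\mu\mu^\rho = u(\mathfrak{a})\,N(\mathfrak{a})$ for a unit $u(\mathfrak{a})\in E_{K_0}$. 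Since the image of $\mu\mu^\rho$ under each real embedding of $K_0$ equals $|\mu^{\varphi_j}|^2>0$ and $N(\mathfrak{a})>0$, the unit $u(\mathfrak{a})$ is totally positive, i.e. $u(\mathfrak{a})\in E_{K_0}^+$. I would then define the map by $\mathfrak{a}\mapsto u(\mathfrak{a})\bmod N_{K/K_0}(E_K)$.

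Next I would check well-definedness and the homomorphism property. Replacing $\mu$ by $\mu v$ with $v\in E_K$ multiplies $\mu\mu^\rho$ by $v v^\rho = N_{K/K_0}(v)$, so $u(\mathfrak{a})$ is well defined modulo $N_{K/K_0}(E_K)$; since $N_{K/K_0}(v)$ has totally positive archimedean images for every $v\in E_K$, we have $N_{K/K_0}(E_K)\subseteq E_{K_0}^+$, so the target group is as stated. For $\mathfrak{a},\mathfrak{b}\in I(\Phi^*)$, multiplicativity of $g$ lets us take $\mu_{\mathfrak{a}\mathfrak{b}}=\mu_\mathfrak{a}\mu_\mathfrak{b}$, and multiplicativity of $N=N_{K^*/\mathbb{Q}}$ then gives $u(\mathfrak{a}\mathfrak{b})=u(\mathfrak{a})u(\mathfrak{b})$; hence the map is a homomorphism $I(\Phi^*)\to E_{K_0}^+/N_{K/K_0}(E_K)$.

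Finally I would identify the kernel with $H_0$, which is the step where one must match definitions carefully against (\ref{H0}). Note first that $H_0\subseteq I(\Phi^*)$, because membership in $H_0$ already forces $g(\mathfrak{a})\in P_K$. If $\mathfrak{a}\in I(\Phi^*)$ lies in the kernel, write $u(\mathfrak{a})=v v^\rho$ with $v\in E_K$ and replace $\mu$ by $\mu v^{-1}$: this does not change $(\mu)=g(\mathfrak{a})$, while now $(\mu v^{-1})(\mu v^{-1})^\rho = \mu\mu^\rho (v v^\rho)^{-1} = N(\mathfrak{a})$, which is precisely the condition defining $H_0$; conversely $H_0\subseteq\ker$ is immediate. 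Therefore the map induces the desired injection $I(\Phi^*)/H_0\hookrightarrow E_{K_0}^+/N_{K/K_0}(E_K)$. The only non-formal ingredient is the reflex-norm identity $g(\mathfrak{a})g(\mathfrak{a})^\rho=(N(\mathfrak{a}))$, which is what makes $u(\mathfrak{a})$ a unit rather than merely an element of $K_0$; everything else is bookkeeping with unit groups, the multiplicativity of $g$, and the definition of $H_0$.
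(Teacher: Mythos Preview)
The paper does not supply its own proof of this lemma; it merely cites \cite{Shimura1} (Appendix) and later writes ``For proofs of the above arguments, see \cite{Shimura1}.'' Your argument is correct and is precisely the standard one from Shimura's appendix: construct $u(\mathfrak{a})=\mu\mu^\rho/N(\mathfrak{a})$ via the reflex-norm identity $g(\mathfrak{a})g(\mathfrak{a})^\rho=(N(\mathfrak{a}))$, observe it lands in $E_{K_0}^+$, check well-definedness modulo $N_{K/K_0}(E_K)$, and identify the kernel with $H_0$ by adjusting the generator $\mu$. There is nothing to add or correct.
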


 \begin{lem} \label{SimuQ} (\cite{Shimura1} Appendix)
 If $K$ is  an cyclic extension over $\mathbb{Q} $,
 then, $K^*=K$.
 Moreover, it holds that
 $
  H_0 \subset I_0 (K/K_0) ,  I(\Phi^*)\subset I(K/K_0) 
  $
 and
\begin{align}\label{hougan1}
 I_{K_0} P_{K} \subset H_0 \subset I(K/K_0) \subset I_K .
 \end{align}
  \end{lem}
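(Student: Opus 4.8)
The plan is to reduce everything to standard facts about CM types and their reflexes from Shimura's theory, recorded in Section 1. First, since $K$ is a cyclic quartic extension of $\mathbb{Q}$, I would invoke Example \ref{ExapS} (ii): for a CM type $(K,\{\mathrm{id},\sigma\})$ with ${\rm Gal}(K/\mathbb{Q})\simeq\mathbb{Z}/4\mathbb{Z}$, the CM type is primitive and its reflex is $(K,\{\mathrm{id},\sigma\})$ itself, so $K^*=K$ immediately. For this step I only need to check that the given cyclic quartic CM field, being normal over $\mathbb{Q}$ with Galois group $\mathbb{Z}/4\mathbb{Z}$, indeed produces a primitive CM type, which is exactly the content of Example \ref{ExapS} (ii) together with the observation that the unique quadratic subfield $K_0$ is real (so the embeddings split as required).

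Next I would establish the chain of inclusions in (\ref{hougan1}). The key observation is that, for $K^*=K$, the map $\Phi^*$ of (\ref{Phi*}) sends an ideal $\mathfrak a\in I_K$ to $\mathfrak a^{\Phi^*}=\mathfrak a^{\psi_1}\mathfrak a^{\psi_2}=\mathfrak a\cdot\mathfrak a^{\sigma}$, and since $\sigma$ and the complex conjugation $\rho=\sigma^2$ generate a cyclic group, $\mathfrak a\,\mathfrak a^\sigma$ and $N_{K/K_0}(\mathfrak a)=\mathfrak a\,\mathfrak a^\rho$ are related by applying $\sigma$; in particular $\mathfrak a^{\Phi^*}$ lies over $N_{K/K_0}(\mathfrak a)$ in $K_0$. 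From the definition (\ref{H0}) of $H_0$, an ideal $\mathfrak a\in H_0$ satisfies $g(\mathfrak a)=\mathfrak a^{\Phi^*}=(\mu)$ with $N(\mathfrak a)=\mu\mu^\rho$; reading this back through $I(K/K_0)$ and $I_0(K/K_0)$ of (\ref{K/K0}) shows $\mathfrak a^{\Phi^*}\in P_K$ forces $\mathfrak a$ into $I(K/K_0)$ — because $\mathfrak a^{\Phi^*}\cdot(\mathfrak a^{\Phi^*})^{\rho-1}$ being principal translates, after stripping the $\sigma$-twist, into $\mathfrak a(\mathfrak a^\rho)^{-1}\in P_K$ — and the extra condition $N(\mathfrak a)=\mu\mu^\rho$ with $(\mu)=\mathfrak a^{\Phi^*}$ pins $\mathfrak a$ down to $I_0(K/K_0)$. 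This gives $H_0\subset I_0(K/K_0)$ and, combined with the definition (\ref{I(Phi*)}) of $I(\Phi^*)$, the inclusion $I(\Phi^*)\subset I(K/K_0)$. For the leftmost inclusion $I_{K_0}P_K\subset H_0$: an ideal of the form $\mathfrak b\mathfrak O_K\cdot(\xi)$ with $\mathfrak b\in I_{K_0}$ has $\Phi^*$-image $\mathfrak b^2\mathfrak O_K\cdot(\xi\xi^\sigma)$, which is visibly principal over $K_0$, and a direct check of the norm condition in (\ref{H0}) (using that $\mathfrak b$ is fixed by $\rho$) shows it satisfies the defining constraints of $H_0$; the chain $I_{K_0}P_K\subset H_0\subset I(K/K_0)\subset I_K$ then follows, the last inclusion being trivial.

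The main obstacle I anticipate is bookkeeping the precise interplay between the three arithmetic conditions defining $H_0$ — principality of $g(\mathfrak a)$, existence of $\mu$, and the norm equation $N(\mathfrak a)=\mu\mu^\rho$ — and the ``purely ideal-theoretic'' groups $I_0(K/K_0)$ and $I_{K_0}P_K$, since $H_0$ carries an element-level (not just ideal-class-level) constraint while the latter are defined entirely in terms of ideals. The resolution is to note that the element $\mu$ is determined only up to a unit of $K$, and modulo such units the norm condition $N(\mathfrak a)=\mu\mu^\rho$ is equivalent to the ideal statement $N_{K/K_0}(\mathfrak a)=\mu\mu^\rho\mathfrak O_{K_0}$ together with a positivity/sign condition that is automatically satisfied here because the relevant norms of units are totally positive; this is precisely the sort of argument carried out in the appendix of \cite{Shimura1}, so I would cite that appendix for the unit-theoretic lemmas (\ref{etabetagamma})–(\ref{beta.E}) and simply verify that the cyclic quartic hypothesis makes the reflex condition $K^*=K$ applicable, reducing the lemma to a specialization of Shimura's general computation. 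The remaining containments are then formal consequences of the definitions once $K^*=K$ and $\Phi^*(\mathfrak a)=\mathfrak a\,\mathfrak a^\sigma$ are in hand.
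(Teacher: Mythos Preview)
The paper does not actually prove this lemma: immediately after stating Lemmas \ref{ShimuLem}, \ref{ShimuUme} and \ref{SimuQ} it simply writes ``For proofs of the above arguments, see \cite{Shimura1}.'' So there is no argument in the paper to compare against; you are supplying a sketch where the paper gives only a citation. Your overall strategy --- invoke Example \ref{ExapS} (ii) for $K^*=K$, then write $\Phi^*(\mathfrak a)=\mathfrak a\,\mathfrak a^{\sigma^{\pm 1}}$ explicitly and chase the definitions (\ref{K/K0}), (\ref{H0}), (\ref{I(Phi*)}) --- is exactly how one proves this, and is consistent with what Shimura does in the cited appendix.

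There is one concrete slip in your sketch. In verifying $I_{K_0}P_K\subset H_0$ you write that for $\mathfrak b\in I_{K_0}$ the $\Phi^*$-image of $\mathfrak b\mathfrak O_K$ is $\mathfrak b^2\mathfrak O_K$. This is not right: since $\rho=\sigma^2$ fixes $K_0$ but $\sigma$ itself does not, $\sigma$ restricts to the nontrivial automorphism of $K_0/\mathbb Q$, so $(\mathfrak b\mathfrak O_K)^{\Phi^*}=\mathfrak b\,\mathfrak b^\sigma\,\mathfrak O_K=N_{K_0/\mathbb Q}(\mathfrak b)\,\mathfrak O_K$. The conclusion (principality) still holds because $N_{K_0/\mathbb Q}(\mathfrak b)$ is a positive rational integer, and the norm condition $N_{K/\mathbb Q}(\mathfrak b\mathfrak O_K)=\mu\mu^\rho$ is then satisfied with $\mu=N_{K_0/\mathbb Q}(\mathfrak b)\in\mathbb Q$, but your intermediate computation should be corrected. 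A second, smaller point: your justification of $H_0\subset I_0(K/K_0)$ (``pins $\mathfrak a$ down to $I_0(K/K_0)$'') is vague; what one actually shows is that $(\mu)=\mathfrak a\,\mathfrak a^\sigma$ together with $(\mu^\sigma)=\mathfrak a^\sigma\mathfrak a^\rho$ gives $\mathfrak a=(\mu/\mu^\sigma)\cdot\mathfrak a^\rho$, so $\mathfrak a$ differs from the $\rho$-invariant ideal $\mathfrak a\mathfrak a^\rho$ by a principal factor, placing it in $P_K\{\mathfrak c:\mathfrak c^\rho=\mathfrak c\}=I_0(K/K_0)$. With these two fixes your sketch is a correct proof of the lemma, going somewhat beyond what the paper itself provides.
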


 For proofs of the above arguments, 
 see \cite{Shimura1}.
   
 \begin{lem} \label{OurShimu}
 Let $K_0$ be a totally real number field and $K$ be an imaginary quadratic extension of $K_0$. 
 Let us consider the following three conditions:
 \begin{align}\label{OurCondition}
 \begin{cases}
& {\rm (C1)} : \quad h_{K_0}=1, \\
&{\rm (C2)} :\quad E_{K_0}^+=E_{K_0}^2 , \\
&{\rm (C3)}:  \quad K / \mathbb{Q} \text{ is cyclic.}
 \end{cases}
 \end{align} 

 (1) 
 If $K$ satisfies the condition {\rm (C1)},
then
$ \eta=0$ and $I_K=R_K$ hold.
 
 (2)
 If $K$ satisfies the conditions {\rm (C1)} and {\rm (C2)},
 then
$\varepsilon =0, \beta =0, I_K=R_K^+$ and $I(\Phi^*) = H_0$ hold. 
 
(3) Suppose the ideal class group $I_K/P_K$ is isomorphic to  
\begin{align}\label{ICGp}
 (\mathbb{Z}/2\mathbb{Z})^{r_1}\oplus (\mathbb{Z}/2^2\mathbb{Z})^{r_2}\oplus \cdots  \oplus (\mathbb{Z}/2^k\mathbb{Z})^{r_k} \oplus G_1,
 \end{align}
where $G_1$ does not contain any elements of order $2m$ ($m\in \mathbb{Z}$).
Set $\sum_{j=1}^k r_j =r.$
 If $K$ satisfies the conditions {\rm (C1)}, {\rm (C2)} and {\rm (C3)},
then $\gamma=r$, $\displaystyle [I(K/K_0):P_K]=2^r$ 
hold.
  \end{lem}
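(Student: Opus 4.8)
The plan is to combine the structural facts about CM fields recorded in Lemmas \ref{ShimuLem}--\ref{SimuQ} with the three hypotheses (C1), (C2), (C3), feeding in parts (1) and (2) of the present lemma as we go. The three assertions $\gamma = r$ and $[I(K/K_0):P_K]=2^r$ should fall out in that order, so I would organize the argument as a short chain of equalities between indices of the relevant subgroups of $I_K$.

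\emph{Step 1: reduce to counting $2$-torsion in the class group.} Under (C1), Lemma \ref{OurShimu}(1) gives $\eta = 0$, so the first relation in (\ref{beta.E}) reads $[I_K : I_{K_0}P_K] = h_K/h_{K_0}$, and with (C1) this is just $h_K$. Since (C3) is in force, Lemma \ref{SimuQ} applies and we have the chain (\ref{hougan1}): $I_{K_0}P_K \subset H_0 \subset I(K/K_0) \subset I_K$, together with $H_0 \subset I_0(K/K_0)$. Combining $\eta=0$ with $h_{K_0}=1$ makes $2^{-\eta}h_{K_0}=1$ odd, so the hypothesis of Lemma \ref{ShimuLem}(3) is satisfied; hence $I(K/K_0) = \{\mathfrak{a}\in I_K \mid \mathfrak{a}^2 \in I_{K_0}P_K\}$ and $2^{\beta+\gamma}$ equals the number of elements of order $1$ or $2$ in $I_K/P_K$. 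From the explicit shape (\ref{ICGp}) of the ideal class group, that number of $2$-torsion elements is exactly $2^r$, so $\beta + \gamma = r$.

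\emph{Step 2: kill $\beta$ and identify $\gamma$.} Here is where (C2) enters: part (2) of the present lemma (proved just above) gives $\beta = 0$ under (C1) and (C2). Therefore $\gamma = r$, which is the first assertion. For the index statement, note that since $\eta = 0$ we have $I_{K_0}P_K = P_{K_0}P_K = P_K$ (as $P_{K_0}\subset P_K$), so the defining description $I(K/K_0)=\{\mathfrak{a} : \mathfrak{a}^2 \in I_{K_0}P_K\}$ becomes $I(K/K_0) = \{\mathfrak{a}\in I_K : \mathfrak{a}^2 \in P_K\}$, i.e. $I(K/K_0)/P_K$ is precisely the $2$-torsion subgroup of $I_K/P_K$. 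By (\ref{ICGp}) this subgroup has order $2^r$, giving $[I(K/K_0):P_K] = 2^r$. Alternatively one reaches the same conclusion from the index relation $2^\gamma = [I_0(K/K_0):I_{K_0}P_K]$ in (\ref{etabetagamma}) together with $\beta=0$ forcing $I(K/K_0)=I_0(K/K_0)$, and then $[I(K/K_0):P_K] = [I(K/K_0):I_{K_0}P_K]\cdot[I_{K_0}P_K : P_K] = 2^\gamma \cdot 1 = 2^r$.

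\emph{Main obstacle.} The delicate point is making sure the hypothesis chain is genuinely sufficient: the cleanest route uses Lemma \ref{ShimuLem}(3), which requires $2^{-\eta}h_{K_0}$ odd, and that is precisely why we need $\eta = 0$ from (C1) via part (1) — so the logical dependence on part (1), and on (C2) via part (2) to force $\beta = 0$, must be spelled out rather than assumed. I also want to double-check the identification $I_{K_0}P_K = P_K$ when $\eta = 0$: $\eta = 0$ means $[I_{K_0}\cap P_K : P_{K_0}] = 1$, which is not literally $P_{K_0}\subset P_K$ in general, but $P_{K_0}\subset P_K$ holds trivially since a principal ideal of $K_0$ extends to a principal ideal of $K$ — so $I_{K_0}P_K$ contains $P_K$ and the subtlety is only whether equality can fail, which it cannot because $I_{K_0}P_K/P_K$ injects into $I_{K_0}/(I_{K_0}\cap P_K) = I_{K_0}/P_{K_0}$ which is trivial by (C1). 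That last micro-argument is the one place a careless reader could stumble, so I would write it out explicitly.
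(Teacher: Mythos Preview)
Your argument for part (3) is correct and follows the same route as the paper: use (C1) to get $\eta=0$, apply Lemma~\ref{ShimuLem}(3) to identify $2^{\beta+\gamma}$ with the number of $2$-torsion elements in $I_K/P_K$ (which is $2^r$ from the decomposition~(\ref{ICGp})), then use $\beta=0$ from part (2) to conclude $\gamma=r$; the index $[I(K/K_0):P_K]=2^r$ then drops out of~(\ref{etabetagamma}) once one knows $I_{K_0}P_K=P_K$. Your explicit justification of $I_{K_0}P_K=P_K$ is more careful than the paper, which simply cites~(\ref{etabetagamma}); note that the quickest path here is just $h_{K_0}=1 \Rightarrow I_{K_0}=P_{K_0}\subset P_K$, and your detour through $\eta$ is unnecessary (though harmless). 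Two small remarks: the invocation of (C3) via Lemma~\ref{SimuQ} is not actually used in the index computation for part (3), so you could trim it; and since the stated lemma includes parts (1) and (2), you should include their one-line proofs rather than only citing them---the paper does prove all three parts.
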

 
 \begin{proof}
 (1) Assuming (C1), we have $\eta=0$ and $I_K=R_K$
from    (\ref{IREmb})
 and  (\ref{etabetagamma}).

(2) By (C2) and (\ref{EUnit}),  we obtain $\varepsilon=0$. 
From (\ref{beta.E}), $\beta=0$ holds.
Moreover, (\ref{delta}), (\ref{RR+}) and the above (1) assure $I_K=R_K^+$.
Note that $[E_{K_0}^+:E_{K_0}^2]=[E_{K_0}^+:N_{K/K_0}(E_K)][N_{K/K_0}(E_K):E_{K_0}^2]$.
 Together with Lemma \ref{ShimuUme}, we have $I(\Phi^*) = H_0$.

(3) 
If $I_K/P_K$ is isomorphic to the group of (\ref{ICGp}), 
the number of elements of order $1$ or $2$ of $I_K/P_K$ is equal to $2^r$.
By (C1) and Lemma \ref{ShimuLem} (3),  we have $\beta+ \gamma =r$. 
Together with the above (2), we have  $\gamma=r$.
So, by  (\ref{etabetagamma}), we obtain
$[I(K/K_0):P_K]=2^r$.
\end{proof}

Now, we determine the structure of the group $H_0$ when $K$ satisfies the conditions (C1), (C2) and (C3). 

\begin{thm}\label{ThmHantei}
Let $K$ be a quartic CM field.
Suppose the ideal class group is isomorphic to the group of (\ref{ICGp}).
If $K$ satisfies the conditions
 {\rm (C1),(C2)} and {\rm (C3)} in Lemma \ref{OurShimu},
 then
 \begin{align}\label{ClassStrH0}
 I_K/H_0 \simeq (\mathbb{Z}/2\mathbb{Z})^{r_2} \oplus  \cdots \oplus (\mathbb{Z}/2^{k-1}\mathbb{Z})^{r_k} \oplus G_1
 \end{align}
holds.
 \end{thm}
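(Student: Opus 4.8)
The plan is to locate $H_0$ between the two explicit groups provided by Lemma \ref{SimuQ}, namely $I_{K_0}P_K \subset H_0 \subset I(K/K_0)$, and then to identify each of these groups modulo $P_K$ using the numerical invariants computed in Lemma \ref{OurShimu}. First I would pass to the quotient by $P_K$ throughout: since the three conditions {\rm (C1)}, {\rm (C2)}, {\rm (C3)} are in force, Lemma \ref{OurShimu}(2) gives $I(\Phi^*)=H_0$, and Lemma \ref{SimuQ} gives $H_0\subset I_0(K/K_0)$. Combining $I_{K_0}P_K\subset H_0$ with $h_{K_0}=1$ (so $I_{K_0}=P_{K_0}$, hence $I_{K_0}P_K/P_K$ is the image of $P_{K_0}$ in $I_K/P_K$, which is trivial because principal ideals of $K_0$ extend to principal ideals of $K$), I would argue that $I_{K_0}P_K\subseteq P_K$. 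Therefore $H_0/P_K$ sits inside $I(K/K_0)/P_K$, and by Lemma \ref{OurShimu}(3) the latter has order $2^r$ where $r=\sum_{j=1}^k r_j$.

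**Pinning down the order of $H_0$.** The key quantitative input is the index $[I(K/K_0):H_0]$. From Lemma \ref{ShimuUme} we have an embedding $I(\Phi^*)/H_0\hookrightarrow E_{K_0}^+/N_{K/K_0}(E_K)$; but {\rm (C2)} forces $E_{K_0}^+=E_{K_0}^2$, and since $E_{K_0}^2\subseteq N_{K/K_0}(E_K)$ always holds, the target group is trivial. Hence $I(\Phi^*)=H_0$ — consistent with Lemma \ref{OurShimu}(2) — but this alone does not separate $H_0$ from $I(K/K_0)$. The real mechanism is the chain $H_0\subseteq I_0(K/K_0)\subseteq I(K/K_0)$ together with the index formula $2^\beta=[I(K/K_0):I_0(K/K_0)]$ from \eqref{etabetagamma}, which is $1$ because $\beta=0$ by Lemma \ref{OurShimu}(2). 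So $H_0\subseteq I_0(K/K_0)=I(K/K_0)$, and I must instead extract $[I(K/K_0):H_0]$ from the finer description of $H_0$ in \eqref{H0}: an ideal $\mathfrak{a}\in I(K/K_0)$ lies in $H_0$ iff the principal ideal $g(\mathfrak{a})=(\mu)$ can be chosen with $N(\mathfrak{a})=\mu\mu^\rho$. Because $h_{K_0}=1$ and all relevant class group $2$-torsion is accounted for, the obstruction to this is exactly a square-class condition: $\mathfrak{a}\mapsto$ (class of $g(\mathfrak{a})$ in $I_K/P_K$) together with a sign in $E_{K_0}^+/E_{K_0}^2$, and the second factor vanishes by {\rm (C2)}. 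This is the step I expect to be the main obstacle — carefully showing that passing from ``$g(\mathfrak{a})$ is principal'' to ``$g(\mathfrak{a})$ is principal with the norm compatibility'' costs nothing beyond what {\rm (C1)}, {\rm (C2)} already kill, so that $H_0=I(K/K_0)\cap(\text{kernel of the }2^k\text{-torsion ``halving'' map})$.

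**Extracting the group structure.** Once $H_0$ is identified as a specific subgroup of $I_K$ of index $2^{r}$ over $P_K$, the final step is structural rather than numerical. Writing $I_K/P_K$ as in \eqref{ICGp}, the subgroup $I(K/K_0)/P_K$ equals its $2$-torsion-generated part, i.e. it is the subgroup $(2^{k-1}\mathbb{Z}/2^k\mathbb{Z})^{r_k}\oplus\cdots$ — concretely, $H_0$ contains, for each cyclic summand $\mathbb{Z}/2^j\mathbb{Z}$, exactly the index-$2$ subgroup $2\mathbb{Z}/2^j\mathbb{Z}$, while it contains all of $G_1$ (which has odd-order-by-odd-order type, so no halving obstruction arises: every element is already a square). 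Therefore $I_K/H_0$ is obtained from $I_K/P_K$ by replacing each $\mathbb{Z}/2^j\mathbb{Z}$ with $\mathbb{Z}/2^{j-1}\mathbb{Z}$ for $j=1,\dots,k$ (the $j=1$ summands die entirely, accounting for the $2^{r_1}$ drop that makes $[I(K/K_0):P_K]=2^r$ consistent with $[I_K:H_0]=h_K/2^{r_1}$) and leaving $G_1$ untouched, which is precisely \eqref{ClassStrH0}. I would conclude by verifying the orders match: $|I_K/H_0|=2^{m_2}\cdots 2^{(k-1)m_k}|G_1|$ with $m_j=r_j$, against $h_K/[H_0:P_K]$, using $[H_0:P_K]=[I(K/K_0):P_K]/[I(K/K_0):H_0]$ and the fact that $[I(K/K_0):H_0]=2^{r_1}$ from the $j=1$ summands collapsing.
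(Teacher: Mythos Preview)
Your setup is correct: passing to $I_K/P_K$, using {\rm (C1)} to get $I_{K_0}P_K=P_K$, using {\rm (C2)} to get $\beta=0$ and $I(\Phi^*)=H_0$, and identifying $\overline{I(K/K_0)}$ with the full $2$-torsion subgroup of order $2^r$ via Lemma~\ref{OurShimu}(3). But the decisive step --- showing $\overline{H_0}=\overline{I(K/K_0)}$ --- is missing, and you flag it yourself as ``the main obstacle.'' Your suggested route through the norm-compatibility clause in the definition of $H_0$ and an unspecified ``square-class condition'' is not carried out, and in fact that is not how the paper proceeds. The paper instead exploits {\rm (C3)} directly: since $K/\mathbb{Q}$ is cyclic with ${\rm Gal}(K/\mathbb{Q})=\{1,\sigma,\sigma^2=\rho,\sigma^3\}$, the reflex map has the explicit form $\mathfrak{a}^{\Phi^*}=\mathfrak{a}\,\mathfrak{a}^{\sigma^{-1}}$, so $\overline{H_0}=\overline{I(\Phi^*)}=\{[\mathfrak{a}]:[\mathfrak{a}]^\sigma=[\mathfrak{a}]^{-1}\}$. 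On $\overline{I(K/K_0)}$ every class satisfies $[\mathfrak{b}]^{-1}=[\mathfrak{b}]$ and $[\mathfrak{b}]^\rho=[\mathfrak{b}]$, so membership in $\overline{H_0}$ reduces to $[\mathfrak{b}]^\sigma=[\mathfrak{b}]$. If some $[\mathfrak{b}]\in\overline{I(K/K_0)}\setminus\overline{H_0}$ existed, then $[\mathfrak{c}]:=[\mathfrak{b}][\mathfrak{b}]^\sigma\neq 1$ would satisfy $[\mathfrak{c}]^\sigma=[\mathfrak{b}]^\sigma[\mathfrak{b}]^{\sigma^2}=[\mathfrak{b}]^\sigma[\mathfrak{b}]^\rho=[\mathfrak{b}]^\sigma[\mathfrak{b}]=[\mathfrak{c}]$, forcing $[\mathfrak{c}]\in\overline{H_0}$; the paper derives a contradiction from this. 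You never invoke the shape of $\Phi^*$ in the cyclic case, and without it there is no mechanism in your outline to pin down $[I(K/K_0):H_0]$.

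Your final paragraph also contains an error that would derail the conclusion even if the gap were filled. You write that $\overline{H_0}$ meets each cyclic summand $\mathbb{Z}/2^j\mathbb{Z}$ in the index-$2$ subgroup $2\mathbb{Z}/2^j\mathbb{Z}$ and contains all of $G_1$. Neither is correct: $\overline{H_0}\subset\overline{I(K/K_0)}$ is contained in the $2$-torsion, so it cannot contain $G_1$ (which has no $2$-torsion), and in each factor $\mathbb{Z}/2^j\mathbb{Z}$ it is the order-$2$ subgroup $2^{j-1}\mathbb{Z}/2^j\mathbb{Z}$, not the index-$2$ one. Consequently your claim $[I(K/K_0):H_0]=2^{r_1}$ is wrong; the paper proves this index is $1$. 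The correct picture is $\overline{H_0}=\overline{I(K/K_0)}=(2$-torsion of $I_K/P_K)$, and the quotient of each $\mathbb{Z}/2^j\mathbb{Z}$ by its unique order-$2$ subgroup is $\mathbb{Z}/2^{j-1}\mathbb{Z}$, which yields \eqref{ClassStrH0} with $m_j=r_j$.
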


\begin{proof}

Since $K/\mathbb{Q}$ is  cyclic,
we have 
\begin{eqnarray}\label{GalC}
 {\rm Gal} (K / \mathbb{Q}) =\{  id, \sigma, \sigma^2=\rho, \sigma ^3 \},
\end{eqnarray}
where $(K,\{id,\sigma\})$ gives a CM type of $K$ (see Example \ref{ExapS}).
Recall that  $(K,\{id,\sigma\})$ is primitive. 
The corresponding reflex is given by $(K,\{id, \sigma^{-1}\})$.
The mapping $\Phi^*$ of (\ref{Phi*}) has the form 
\begin{eqnarray}\label{Phi4}
\mathfrak{a} \mapsto \mathfrak{a}^{\Phi^*}=\mathfrak{a}\mathfrak{a}^{\sigma^{-1}}.
\end{eqnarray}

Consider the canonical projection
$
p: I_K \rightarrow I_K/P_K 
$
given by
$\mathfrak{a} \mapsto [\mathfrak{a}]$. 
We set $p(I(K/K_0))=\overline{I(K/K_0)}$ and  $p(H_0)=\overline{H_0}$.
They are subgroups of the ideal class group $I_K/P_K$.
We shall prove $\overline{H_0}=\overline{I(K/K_0)}.$

From Lemma \ref{ShimuLem} (3) (especially (\ref{2Tor})) and Lemma \ref{OurShimu} (1), 
any $[\mathfrak{b}]\in \overline{I(K/K_0)}$ is  a $2$-torsion element. 
Especially, we have
\begin{eqnarray}\label{Ib2}
[\mathfrak{b}]^{-1}=[\mathfrak{b}].
\end{eqnarray}
Due to Lemma \ref{OurShimu} (3),
for some $[\mathfrak{b}_1],\cdots, [\mathfrak{b}_r] \in \overline{I(K/K_0)},$ 
\begin{align}\label{2Torsion}
\overline{I(K/K_0)}= \{[\mathfrak{b}_1]^{e_1} \cdots [\mathfrak{b}_r]^{e_r}| e_1,\cdots, e_r \in \{0,1\}\}\simeq (\mathbb{Z}/2\mathbb{Z})^r ,
\end{align}
where $r=\sum_{j=1}^k r_j$.
Here,
each factor $(\mathbb{Z}/2\mathbb{Z})$ of (\ref{2Torsion}) is a subgroup of some $(\mathbb{Z}/2^{j} \mathbb{Z})$ in (\ref{ICGp}).
Moreover,
according to the definition of $\overline{I(K/K_0)}$ of (\ref{K/K0}),
we have
\begin{align}\label{brho}
[\mathfrak{b}]^\rho = [\mathfrak{b}] \quad\quad ([\mathfrak{b}]\in\overline{I(K/K_0)}).
\end{align}

On the other hand,
by (\ref{I(Phi*)}), Lemma \ref{OurShimu} (2) and  (\ref{Phi4}),  
we have ${\rm Ker}( \Phi)=\overline{H}_0$
and
\begin{eqnarray}\label{H0C}
[\mathfrak{a}]\in \overline{H_0} \Longleftrightarrow [\mathfrak{a}]^\sigma = [\mathfrak{a}]^{-1}.
\end{eqnarray}

Under the condition (C1),  the sequence (\ref{hougan1}) becomes to be 
$
 P_{K} \subset H_0 \subset I(K/K_0) \subset I_K.
$
Let us assume
\begin{align}\label{Assume}
\overline{H_0} \subsetneqq \overline{I(K/K_0)}.
\end{align}
Then, 
without loss of generality,
we can take 
$[\mathfrak{b}_1],\cdots, [\mathfrak{b}_r]$ in (\ref{2Torsion})
and
$s\in \mathbb{Z}$ $(0 < s\leq r)$  such that 
$$
\overline{H_0}=\{[\mathfrak{b}_{s+1}]^{e_{s+1}}\cdots [\mathfrak{b}_{r}]^{e_r}| e_{s+1},\cdots, e_r \in \{0,1\}\}.
 $$
So, 
according to  (\ref{2Torsion}),
$J_0=\overline{I(K/K_0)}/\overline{H_0}$ must be in the form
\begin{align}\label{J0G}
J_0= \{[\mathfrak{b}_{1}]^{e_{1}}\cdots [\mathfrak{b}_{s}]^{e_s}| e_{1},\cdots, e_s \in \{0,1\}\}.
\end{align}
For any $[\mathfrak{b}]\in \overline{I(K/K_0)}$,
we have
 $[\mathfrak{b}]^\sigma \in \overline{I(K/K_0)}$
since
$([\mathfrak{b}]^\sigma)^\rho=([\mathfrak{b}]^\rho)^\sigma=[\mathfrak{b}]^\sigma.$
By virtue of (\ref{H0C}),
we have
\begin{align}\label{bjk}
[\mathfrak{b}_j]^\sigma \in J_0,\quad  [\mathfrak{b}_j]^\sigma \not= [\mathfrak{b}_j] \quad \quad ( j \in\{1,\cdots, s\}).
\end{align}
For $j\in\{1,\cdots,s\},$ set
\begin{align}\label{cj}
[\mathfrak{c}_j]=[\mathfrak{b}_j][\mathfrak{b}_j]^\sigma (\not= id_{\overline{I(K/K_0)}}).
\end{align}
According to (\ref{J0G}) and (\ref{bjk}),
$[\mathfrak{c}_j] \in J_0$.
From (\ref{GalC}) and (\ref{brho}),
we have
$$
[\mathfrak{c}_j]^\sigma =([\mathfrak{b}_j][\mathfrak{b}_j]^\sigma)^\sigma=[\mathfrak{b}_j]^\sigma [\mathfrak{b}_j]^\rho =[\mathfrak{b}_j]^\sigma [\mathfrak{b}_j]=[\mathfrak{c}_j].
$$
Therefore,
$[\mathfrak{c}_j]\in \overline{H_0}$.
This is a contradiction.

Hence, the assumption (\ref{Assume}) is not true.
Therefore, we have $\overline{I(K/K_0)}=\overline{H_0}$. 
According to (\ref{ICGp}) and (\ref{2Torsion}),  we have (\ref{ClassStrH0}).
\end{proof}

\begin{cor}
Let $K$ be a quartic CM field as in Theorem \ref{ThmHantei}.
Set $r=\sum_{j=1}^k r_j.$
The unramified class field $K(X,Y)/K$ corresponding to  the group $H_0$ of (\ref{H0}) is an extension over $K$ of degree $\displaystyle \frac{h_K}{2^r}$.
\end{cor}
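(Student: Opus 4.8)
The plan is to convert the degree $[K(X,Y):K]$ into an index of ideal groups and then evaluate that index with the help of Theorem \ref{ThmHantei}. First I would note that, since $K/\mathbb{Q}$ is cyclic quartic, Example \ref{ExapS} (ii) (equivalently Lemma \ref{SimuQ}) gives $K^*=K$; hence the field $k_0=MK^*$ appearing in Theorem \ref{ThmClassField} is here $\mathbb{Q}(X,Y)\cdot K=K(X,Y)$, where $X,Y$ denote the relevant special values as in Theorem \ref{ThmCF}. By Theorem \ref{ThmCF} this $K(X,Y)/K$ is the unramified class field attached to $H_0$, so its conductor is $(1)$, and Definition \ref{HCG} together with the Artin reciprocity law yields $I_K/H_0\simeq{\rm Gal}(K(X,Y)/K)$ and in particular $[K(X,Y):K]=[I_K:H_0]$.

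Next I would compute $[I_K:H_0]$. Since $K$ satisfies the hypotheses of Theorem \ref{ThmHantei}, in particular condition (C1), the chain (\ref{hougan1}) of Lemma \ref{SimuQ} simplifies to $P_K\subset H_0\subset I(K/K_0)\subset I_K$. Therefore $[I_K:H_0]=[I_K:P_K]/[H_0:P_K]=h_K/[H_0:P_K]$, and the whole matter is reduced to showing $[H_0:P_K]=2^r$. But the proof of Theorem \ref{ThmHantei} shows exactly that the image $\overline{H_0}$ of $H_0$ in the ideal class group $I_K/P_K$ equals $\overline{I(K/K_0)}$, which by (\ref{2Torsion}) is isomorphic to $(\mathbb{Z}/2\mathbb{Z})^r$ with $r=\sum_{j=1}^k r_j$. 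Since $H_0\supset P_K$ gives $[H_0:P_K]=|\overline{H_0}|$, we get $[H_0:P_K]=2^r$ and hence $[K(X,Y):K]=h_K/2^r$.

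As a cross-check I would recompute the same number from the two explicit structure statements: (\ref{ICGp}) gives $h_K=2^{\,r_1+2r_2+\cdots+kr_k}\,|G_1|$, while (\ref{ClassStrH0}), with the exponents $m_j$ being the $r_j$ as in the proof of Theorem \ref{ThmHantei}, gives $[K(X,Y):K]=|I_K/H_0|=2^{\,r_2+2r_3+\cdots+(k-1)r_k}\,|G_1|$; the ratio of the two is $2^{\,r_1+r_2+\cdots+r_k}=2^r$, as required. I do not expect any genuine obstacle in this corollary: the only points deserving care are the identifications $K^*=K$ and $k_0=K(X,Y)$ and the use of condition (C1) to secure $P_K\subset H_0$, after which everything is elementary bookkeeping in finite abelian groups.
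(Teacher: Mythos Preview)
Your proof is correct and follows the same route the paper intends: the paper's own proof is the single line ``It is clear because we have Theorem \ref{ThmClassField} and Theorem \ref{ThmHantei},'' and you have simply unpacked this by identifying $[K(X,Y):K]$ with $|I_K/H_0|$ via the Artin isomorphism and then reading off that order from Theorem \ref{ThmHantei} (either as $h_K/[H_0:P_K]=h_K/2^r$ or directly from the structure (\ref{ClassStrH0})). Your interpretation $m_j=r_j$ in (\ref{ClassStrH0}) is the one forced by the proof of Theorem \ref{ThmHantei}, so the cross-check is valid.
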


 \begin{proof}
 It is clear because we have Theorem \ref{ThmClassField} and Theorem \ref{ThmHantei}.
 \end{proof}

 \subsection{Cyclic quartic CM fields over $\mathbb{Q}(\sqrt{5})$}

 Letting $K_0$ be a real quadratic field,
 the group $E_{K_0}$  of units  in $\mathfrak{O}_{K_0}$ is 
 given by the direct product  of
 $\{id,-id\}$
 and $\{\varepsilon _0 ^n |n\in\mathbb{Z}\}$.
 Here, $\varepsilon _0$ is called a fundamental unit of $K_0$.
 If $N_{K/K_0} (\varepsilon_0)=-1,$ 
 then the group $E_{K_0}^+$ of totally positive units in $\mathfrak{O}_{K_0}$ is given by $\{\varepsilon_0^{2n}|n\in\mathbb{Z}\}$ and
 we have
 $E_{K_0}^+=E_{K_0}^2$.

 In this section, we study the  case for $K_0=\mathbb{Q}(\sqrt{5})$ in detail.
 The class number of $\mathbb{Q}(\sqrt{5})$ is equal to $1$.
 A fundamental unit of $\mathbb{Q}(\sqrt{5})$ is given by $\displaystyle \frac{1-\sqrt{5}}{2}$,
 that satisfies $N_{\mathbb{Q}(\sqrt{5})/\mathbb{Q}} \Big(\displaystyle \frac{1-\sqrt{5}}{2}\Big)=-1.$
 Therefore, 
 if $K$ is an imaginary quadratic extension over $\mathbb{Q}(\sqrt{5})$,
 then $K$ satisfies the conditions (C1) and (C2) in Lemma \ref{OurShimu}.
Moreover, if $K/\mathbb{Q}$ is cyclic and $K/\mathbb{Q}(\sqrt{5})$ is totally imaginary quadratic,
 it follows that
 \begin{align}\label{bc5}
 (B,C)=(1,2) \text{ or } (2,1)
 \end{align}
 (see (\ref{CQEDeta})).

\begin{thm} \label{ThmC5}
Let $K$ of (\ref{CQE}) be a totally imaginary extension of $\mathbb{Q}(\sqrt{5})$ such that  $K/\mathbb{Q}$ is cyclic.
Set $\alpha=\sqrt{A(5+B\sqrt{5})}$.
The Riemann form $E$ of (\ref{RiemannShimura}) given by $\zeta$ in Table 6
gives a principal polarization  on the abelian variety $A(\mathfrak{O}_K)$.

\begin{table}[h]
\center
\begin{tabular}{cc}
\toprule
Case  &  $\zeta$ of (\ref{ourzeta}) \\
 \midrule
(ii) \vspace{2mm} &  $\displaystyle\frac{\alpha}{-4\Delta A} = \frac{\alpha}{-20 A} $  \\
(iii)\vspace{2mm} &  $\displaystyle\frac{\alpha}{-2\Delta A} =  \frac{\alpha}{-10 A}$  \\
(iv) \vspace{2mm} &  $\displaystyle\frac{\alpha}{-\Delta A} =   \frac{\alpha}{-5 A}$    \\
(v) &      $\displaystyle\frac{\alpha}{-\Delta A} =   \frac{\alpha}{-5 A}$     \\
 \bottomrule
\end{tabular}
\caption{The number $\zeta$ which gives a principal polarization for $\Delta=5$}
\end{table}
\end{thm}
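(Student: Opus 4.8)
The plan is to reduce the statement to a finite linear-algebra check using the Gram matrices already computed. Since $K_0=\mathbb{Q}(\sqrt 5)$ has discriminant $5$, the normalization (\ref{CQEDeta}) forces $\Delta=5$, hence $B^2+C^2=5$ and $(B,C)\in\{(1,2),(2,1)\}$; in particular $\Delta$ is odd, so case (i) of Table 3 never arises, case (ii) corresponds to $(B,C)=(1,2)$ (the unique choice with $B$ odd), and cases (iii)--(v) to $(B,C)=(2,1)$, the precise one being pinned down by $A\bmod 4$. Note also $A<0$ because $K$ is a CM field. First I would check that the $\zeta$ of Table 6 is admissible in the sense of Section 1.1: writing $\zeta=\alpha/\kappa$ with $\kappa\in\mathbb{Q}_{>0}$ (as $A<0<\Delta$), one has $\zeta^2=A(5+B\sqrt5)/\kappa^2\in K_0$ and $K_0(\zeta)=K$, while the two conjugates of $-\zeta^2$ are $-A(5\pm B\sqrt5)/\kappa^2$, both positive since $-A>0$ and $5-B\sqrt5>0$ for $B\in\{1,2\}$; after fixing the sign of the square root $\alpha$ and the CM type as in Example \ref{ExapS}(ii), ${\rm Im}(\zeta^{\varphi_j})>0$ for $j=1,2$. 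Then, by the computation recalled in Section 1.1, $E(z,\sqrt{-1}w)$ is symmetric and positive non-degenerate, so $E$ of (\ref{RiemannShimura}) is a Riemann form once it is known to be integral and unimodular on $\Lambda(\mathfrak{O}_K)$ --- and that is exactly the assertion that the polarization has degree one.

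For integrality I would invoke the proposition that gives the Gram matrix $\big(E(\alpha_j,\alpha_k)\big)_{j,k}=\mathrm{Tr}_{K/\mathbb{Q}}(\zeta\alpha_j\alpha_k^\rho)$, with respect to the integral basis of Proposition \ref{HardyProp}(3), as the explicit matrix $M_i$ of Table 5. Substituting $\Delta=5$, the relevant pair $(B,C)$, and the value of $\kappa$ from Table 6, the scalar prefactor multiplying $M_i$ collapses to $\tfrac12$ in cases (ii) and (iii) and to $1$ in cases (iv) and (v); in each case the surviving entries ($2$, $1+B$, $C$, $1+B\pm C$, $B$, or their halves, evaluated at $B\in\{1,2\}$, $C\in\{1,2\}$) combine with the prefactor to give integers --- in cases (iv),(v) because $B$ is even and $B\pm C$ is odd, in cases (ii),(iii) because the factor $\tfrac12$ cancels the explicit even entries of $M_i$. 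Thus $E$ takes values in $\mathbb{Z}$ on $\Lambda(\mathfrak{O}_K)$.

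For unimodularity I would use that the determinant of a skew-symmetric $4\times4$ matrix $\left(\begin{smallmatrix}0&a&b&c\\-a&0&d&e\\-b&-d&0&f\\-c&-e&-f&0\end{smallmatrix}\right)$ equals the square of its Pfaffian $af-be+cd$. Computing this Pfaffian for each of the four integral matrices obtained above gives $\pm1$ in every case, hence $\det\big(E(\alpha_j,\alpha_k)\big)=1$; so $E$ is unimodular on $\Lambda(\mathfrak{O}_K)$ and the induced polarization $\Theta$ on $A(\mathfrak{O}_K)=\mathbb{C}^2/\Lambda(\mathfrak{O}_K)$ is principal. Combined with Proposition \ref{PropPrincipal}, this also yields that $(A(\mathfrak{O}_K),\iota)$ is of CM type $(\mathfrak{O}_K,\{id,\varphi\})$, which finishes the argument. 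The only delicate points are bookkeeping: one must match the normalizing rational $\kappa$ in Table 6 to the $2$-adic splitting into cases (ii)--(v), so that the prefactor clears exactly the $2$'s appearing in $M_i$ (this is where the congruences on $A\bmod 4$ enter, and where case (i) and the ``wrong'' choice of $(B,C)$ get excluded), and one should confirm that total positivity of $-\zeta^2$ and positivity of the imaginary parts hold for the single fixed $\zeta$ rather than only up to an obvious rescaling. Beyond the already-established Gram-matrix proposition there is no inequality or structural input, so the whole proof is this finite verification.
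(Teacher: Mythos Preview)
Your proposal is correct and follows essentially the same route as the paper: reduce to the Gram matrices $M_i$ of Table~5, substitute $\Delta=5$, the appropriate $(B,C)$, and the value of $\kappa$ from Table~6, then verify integrality and that the Pfaffian equals $\pm1$. The paper carries this out by writing the four resulting $4\times4$ integer matrices explicitly, while you describe the same check more schematically; your additional remarks on the admissibility of $\zeta$ (totally positive $-\zeta^2$, positive imaginary parts) make explicit a point the paper leaves tacit, but otherwise the arguments coincide.
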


 \begin{proof}
If the alternating Riemann form $E$ of (\ref{RiemannShimura}) is $\mathbb{Z}$-valued, 
the determinant of the matrix $M_j$ in Table 5 is a perfect square number. 
The positive root of this determinant is called the Pfaffian of $E$.
If the Pfaffian of $E$ is equal to $1$,
then the Riemann form $E$ induces a principal polarization
on the complex torus $\mathbb{C}^2/\Lambda(\mathfrak{O}_K)=A(\mathfrak{O}_K)$.
Hence, it is sufficient to see that
the number $\zeta$ in Table 6
gives the $\mathbb{Z}$-valued matrix $M_j$ in Table 5
and the determinant of $M_j$ is equal to $1$.

(ii) In this case, we consider the matrix $M_2$  in Table 5.
 Putting $(B,C)=(1,2) $ and $\kappa=-4\Delta A$,  we have
 $$
 M_2=  \begin{pmatrix} 0 & 0& 1 & 0 \\ 0 & 0 & 1  & 1 \\ -1 & -1 & 0 & 0 \\ 0 & -1 &0 &0 \end{pmatrix}.
 $$
 
 (iii) In this case, we consider the matrix $M_3$  in Table 5.
 Putting $(B,C)=(2,1) $ and $\kappa=-2\Delta A$,  we have
 $$
 M_3=  \begin{pmatrix} 0 & 0& 1 & 1 \\ 0 & 0 & 2  & 1 \\ -1 & -2 & 0 & 0 \\ -1 & -1 &0 &0 \end{pmatrix}.
 $$

 (iv) In this case, we consider the matrix $M_4$  in Table 5.
 If we consider the case (iv),  putting $(B,C)=(1,2) $ and $\kappa=- \Delta A$,  we have
 $$
 M_4=  \begin{pmatrix} 0 & 0& 1 & 1 \\ 0 & 0 & 2  & 1 \\ -1 & -2 & 0 & 1 \\ -1 & -1 &-1 &0 \end{pmatrix}.
 $$
 
 (v)
 In this case, we consider the matrix $M_5$  in Table 5.
 If we consider the case (v),  putting $(B,C)=(1,2) $ and $\kappa=- \Delta A$, ,  we have
 $$
 M_5=  \begin{pmatrix} 0 & 0& 1 & 1 \\ 0 & 0 & 1  & 2 \\ -1 & -1 & 0 & 1 \\ -1 & -2 &-1 &0 \end{pmatrix}.
 $$
 
 \end{proof}

 Theorem \ref{ThmC5} implies that
 we only need principally polarized abelian surfaces to construct unramified class fields $k_0$ in Section 1.4 for $\Delta=5$.
 Hence, Theorem \ref{ThmCF}  is available for every cyclic quartic CM field $K$ for $\Delta=5$. 
 
 \begin{rem}
 The construction of class fields in Section 1.4 for $\Delta=5$ is simpler than the cases of $\Delta>5$.
 In general,  in those cases
 we need  non-principally polarized abelian surfaces.  
 \end{rem}

 \subsection{Examples  }
 
 In this section,
 we  apply our results
  of the icosahedral invariants $X$ and $Y$ 
  to concrete cyclic CM fields $K$.
  We construct unramified class fields $k_0$ in Section 1.4
   over cyclic quartic CM fields $K$ with the maximal real subfield $\mathbb{Q}(\sqrt{5})$.
  According to Theorem \ref{ThmCF} and Theorem \ref{ThmC5}, 
  such class fields are given by the special values of $X$ and $Y$ of (\ref{XY}). 

 Hardy, Hudson, Richmann, Williams and Hiltz \cite{HHRWH}
 listed imaginary cyclic quartic fields $K$ with small conductor.  
Using their result, we will obtain examples of our construction of class fields.
 In Section 3.5.1, we shall give an example such that $[k_0:K]=1$.
 In Section 3.5.2, we shall give an example such that $[k_0:K]=5>1.$
 
 \subsubsection{Case  $K=\mathbb{Q}\left(\sqrt{-(5+\sqrt{5})}\right)$}

 Let us take $K=\mathbb{Q}\left(\sqrt{-(5+\sqrt{5})}\right)$.
 From \cite{HHRWH}, the conductor of $K$ is $5$.
We consider the case (ii) in Table 3.

 According to Theorem \ref{ThmC5},
taking
 $
 \zeta=\frac{ \sqrt{- (5+\sqrt{5})}}{20},
 $
 we have the Riemann form $E$ of (\ref{RiemannShimura}).  
Next, we take a system $\{\alpha_1,\cdots, \alpha_4\}$ of  basis as in Table 4.
Then, we have the lattice
 $
 \Lambda=\langle u(\alpha_1),\cdots , u(\alpha_4) \rangle
 $
 of $\mathbb{C}^2,$
where
\begin{align*}
\begin{cases}
& u(\alpha_1)=\begin{pmatrix} 1 \\ 1 \end{pmatrix},  \quad  u(\alpha_2)=\begin{pmatrix} \frac{1+\sqrt{5}}{2} \\ \frac{1-\sqrt{5}}{2} \end{pmatrix},\\
& u(\alpha_3)= \begin{pmatrix} \sqrt{-(5+\sqrt{5})}  \\  \sqrt{-(5-\sqrt{5})}  \end{pmatrix}, \quad  u(\alpha_4)=\begin{pmatrix} \sqrt{-(5-\sqrt{5})} \\  -\sqrt{-(5+\sqrt{5})}  \end{pmatrix}.
\end{cases}
\end{align*}

Putting
 $
 \lambda_1=u(\alpha_1),  \lambda_2=u(\alpha_2),  \lambda_3=u(\alpha_3)-u(\alpha_4)$ and 
  $\lambda_4=u(\alpha_4),$
 we have
 \begin{align*}
 E(\lambda_j,\lambda_k)=
 \begin{pmatrix}
 0 & 0 & 1 & 0 \\
 0 & 0 &  0 & 1 \\
 -1 & 0 & 0 &0 \\
 0 & -1 & 0 &0
  \end{pmatrix}.
 \end{align*}
We set
 $
(M_1M_2)=(\lambda_1\lambda_2\lambda_3\lambda_4),
$
where
$M_1,M_2 \in M(2,\mathbb{C}).$
So, we have
$
\Omega=-M_2^{-1} M_1 \in \mathfrak{S}_2.
$

 Set 
$
\begin{pmatrix} A _0& B_0 \\ C_0 &D_0  \end{pmatrix} =
\begin{pmatrix}
0 & 1 & 0 & 0\\
1 &0 & 0 & 0\\
0 & 0 &0 & 1\\
0 & 0  & 1 &0
  \end{pmatrix}
  \in Sp(2,\mathbb{Z}).
$

Then, we have
$
 \tilde{\Omega}=(A_0 \Omega + B_0)(C_0 \Omega + D_0)^{-1}
=
\begin{pmatrix}
\tau_1 & \tau_2 \\
\tau_2 & \tau_3
\end{pmatrix},
$
where
$
\tau_1= \frac{1}{5}\sqrt{-1}\sqrt{\frac{25}{2}+\sqrt{5}},
\tau_2=\frac{1}{5} \sqrt{-1}\sqrt{5-\sqrt{5}}$
and
$
 \tau_3= \frac{1}{5}\sqrt{-1}\sqrt{\frac{5}{2}+\sqrt{5}}. 
$
 This satisfies
$
\tau_1  - \tau_2 -\tau_3=0.
$ 
 So, from $\tau_2$ and $\tau_3$,
we have $(z_1^0,z_2^0)\in\mathbb{H}\times \mathbb{H}$ as in (\ref{z10,z20}).
According to Theorem  \ref{ThmCF}, 
using the pair of modular functions $(X,Y)$ of (\ref{XY}),
the unramified class field $k_0$ over $K=\mathbb{Q}\left(\sqrt{-(5+\sqrt{5})}\right)$ corresponding to the group $H_0$ of (\ref{H0}) is given by
$
k_0=K(X(z_1^0,z_2^0),Y(z_1^0,z_2^0)).
$

Due to \cite{HHRWH},
the class number $h_K$ of $K=\mathbb{Q}\left(\sqrt{-(5+\sqrt{5})}\right)$ is equal to $2$.
Therefore, 
$I_K/P_K$ is isomorphic to $(\mathbb{Z}/2\mathbb{Z}).$
Applying Theorem \ref{ThmHantei} to this case,
we have
$[k_0:K]=1.$
This case does not give a non-trivial class-field  over the CM field $K$.

 \begin{rem}
In fact,
Murabayashi and Umegaki \cite{MU} proved that the field of moduli of principally polarized abelian surface corresponding to $K=\mathbb{Q}\left(\sqrt{-(5+\sqrt{5})}\right)$ coincides with $\mathbb{Q}$.
So, our result does not contradict  their result.
 \end{rem}

 \begin{rem}
 In this case,
 since $h_K=2$,
 there exists the absolute class field $C(K)$ over $K$
 such that $[C(K):K]=2.$
 In \cite{NaganoShigaClassField} Example 5.2,
we have an explicit construction of
$C(K)/K$ using another explicit modular function 
derived from the moduli of a PEL-structure for $8$-dimensional  abelian varieties.
 \end{rem}

 \subsubsection{Case  $K=\mathbb{Q}\left(\sqrt{-37(5+2\sqrt{5})}\right)$ }

Let us take $K=\mathbb{Q}\left(\sqrt{-37(5+2\sqrt{5})}\right)$.
From \cite{HHRWH}, the conductor of $K$ is $185$.
We consider the case (v) in Table 3.

According to Theorem \ref{ThmC5},
taking
 $
 \zeta=\frac{ \sqrt{-37 (5+2\sqrt{5})}}{185},
 $
 we have the Riemann form $E$ of (\ref{RiemannShimura}).  
Next, we take a system $\{\alpha_1,\cdots, \alpha_4\}$ of  basis as in Table 4.
Then, we have the lattice
 $
 \Lambda=\langle u(\alpha_1),\cdots , u(\alpha_4) \rangle
 $
 of $\mathbb{C}^2,$
 where
 \begin{align*}
\begin{cases}
& u(\alpha_1)=\begin{pmatrix} 1 \\ 1 \end{pmatrix},  \quad  u(\alpha_2)=\begin{pmatrix} \frac{1+\sqrt{5}}{2} \\ \frac{1-\sqrt{5}}{2} \end{pmatrix},\\
& u(\alpha_3)=\frac{1}{4} \begin{pmatrix} 1+\sqrt{5}+\sqrt{-37(5+2\sqrt{5})} -\sqrt{-37(5-2\sqrt{5})} \\ 1-\sqrt{5}+\sqrt{-37(5-2\sqrt{5})} +\sqrt{-37(5+2\sqrt{5})}  \end{pmatrix}, \\
& u(\alpha_4)=\frac{1}{4} \begin{pmatrix} 1-\sqrt{5}+\sqrt{-37(5+2\sqrt{5})} +\sqrt{-37(5-2\sqrt{5})} \\ 1+\sqrt{5}+\sqrt{-37(5-2\sqrt{5})} -\sqrt{-37(5+2\sqrt{5})}  \end{pmatrix}.
\end{cases}
\end{align*}

Putting
$
\lambda_1=u(\alpha_1),  \lambda_2=u(\alpha_2),  \lambda_3= 2 u(\alpha_3)-u(\alpha_4)$ and 
$\lambda_4=u(\alpha_4)-u(\alpha_3) +u(\alpha_1), $
we have
 \begin{align*}
 E(\lambda_j,\lambda_k)=
 \begin{pmatrix}
 0 & 0 & 1 & 0 \\
 0 & 0 &  0 & 1 \\
 -1 & 0 & 0 &0 \\
 0 & -1 & 0 &0
  \end{pmatrix}.
 \end{align*}
We set
 $
(M_1M_2)=(\lambda_1\lambda_2\lambda_3\lambda_4),
$
where
$M_1,M_2 \in M(2,\mathbb{C}).$
So, we have
$
\Omega=-M_2^{-1} M_1 \in \mathfrak{S}_2.
$

Set
$
\begin{pmatrix} A _0& B_0 \\ C_0 &D_0  \end{pmatrix} =
\begin{pmatrix}
0 & -1 & 0 & 1\\
1 &0 &1 & 0\\
0 & -1 &0 & 0\\
-1 & 0  & 0 &0
  \end{pmatrix}
  \in Sp(2,\mathbb{Z}).
$
 Then, we have
$
 \tilde{\Omega}=(A_0 \Omega + B_0)(C_0 \Omega + D_0)^{-1}
=
\begin{pmatrix}
\tau_1 & \tau_2 \\
\tau_2 & \tau_3
\end{pmatrix},
$
where
$
\tau_1= \sqrt{-1}\sqrt{\frac{37}{10}(5+\sqrt{5})},
\tau_2=\frac{3}{2} -\frac{1}{2}\sqrt{-1}\sqrt{185-\frac{74}{\sqrt{5}}} $ and 
$
\tau_3= -\frac{3}{2}+\frac{1}{2}\sqrt{-1}\sqrt{481+\frac{74}{\sqrt{5}}}. 
$
 This satisfies
$
\tau_1  - \tau_2 -\tau_3=0.
$
So, from $\tau_2$ and $\tau_3$,
we have $(z_1^0,z_2^0)\in\mathbb{H}\times \mathbb{H}$ as in (\ref{z10,z20}).
According to Theorem  \ref{ThmCF}, 
using the pair of modular functions $(X,Y)$ of (\ref{XY}),
the unramified class field $k_0$ over $K=\mathbb{Q}\left(\sqrt{-37(5+2\sqrt{5})}\right)$ corresponding to the group $H_0$ of (\ref{H0}) is given by
$
k_0=K(X(z_1^0,z_2^0),Y(z_1^0,z_2^0)).
$

Due to \cite{HHRWH},
the class number $h_K$ of $K=\mathbb{Q}\left(\sqrt{-37(5+2\sqrt{5})}\right)$ is equal to $10$.
Therefore, 
$I_K/P_K$ is isomorphic to $(\mathbb{Z}/2\mathbb{Z}) \oplus (\mathbb{Z}/5\mathbb{Z}).$
Applying Theorem \ref{ThmHantei} to this case,
we have
$[k_0:K]=5.$
Namely, 
this case   gives an example of an unramified class field $k_0$ corresponding to $H_0$ of (\ref{H0})
which gives a non-trivial extension $k_0/K$.

\section*{Acknowledgment}
The author would like to thank Professor Hironori Shiga for helpful advises and  valuable suggestions,
and also to Professor Kimio Ueno  for kind encouragements.
This work is supported by 
The JSPS Program for Advancing Strategic International Networks to Accelerate the Circulation of Talented Researchers
"Mathematical Science of Symmetry, Topology and Moduli, Evolution of International Research Network based on OCAMI",
The Sumitomo Foundation Grant for Basic Science Research Project (No.150108)
and
Waseda University Grant for Special Research Project (2014B-169 and 2015B-191).

 {\small

}

\begin{center}
\hspace{8.8cm}\textit{Atsuhira  Nagano}\\
\hspace{8.8cm}\textit{Department of Mathematics}\\
\hspace{8.8cm}\textit{King's College London}\\
\hspace{8.8cm}\textit{Strand, London, WC2R 2LS}\\
\hspace{8.8cm}\textit{The United Kingdom}\\
 \hspace{8.8cm}\textit{(E-mail: atsuhira.nagano@gmail.com)}
  \end{center}

\end{document}